\newtheorem{theorem}{Theorem}[section]
\newtheorem{proposition}[theorem]{Proposition}
\newtheorem{lemma}[theorem]{Lemma}
\newtheorem{remark}[theorem]{Remark}
\newtheorem{definition}[theorem]{Definition}
\newtheorem{assumption}[theorem]{Assumption}
\newcommand{\node}{vertex\xspace}
\newcommand{\nodes}{vertices\xspace}
\newcommand{\nbr}[1]{N_{#1}}
\newcommand{\state}[1]{x^{(#1)}}
\newcommand{\bstate}[1]{\mathbf{x}^{(#1)}}
\newcommand{\snorm}[1]{z^{(#1)}}
\newcommand{\Ex}{\mathbb{E}}
\renewcommand{\Pr}{\mathbb{P}}
\title{On the convergence of nonlinear averaging dynamics \\with three-body interactions on hypergraphs}
\author{
    Emilio Cruciani\\
    {\small University of Salzburg}\\
    {\small \texttt{emilio.cruciani@plus.ac.at}}
    \and
    Emanuela L. Giacomelli\\
    {\small LMU Munich}\\
    {\small \texttt{emanuela.giacomelli@math.lmu.de}}
    \and
    Jinyeop Lee\\
    {\small LMU Munich}\\
    {\small \texttt{lee@math.lmu.de}}
}
\date{}
\begin{document}

\maketitle

\begin{abstract}
Complex networked systems in fields such as physics, biology, and social sciences often involve interactions that extend beyond simple pairwise ones.
Hypergraphs serve as powerful modeling tools for describing and analyzing the intricate behaviors of systems with multi-body interactions.
Herein, we investigate discrete-time dynamics with three-body interactions, described by an underlying 3-uniform hypergraph, where \nodes update their states through a nonlinearly-weighted average depending on their neighboring pairs' states.
These dynamics capture reinforcing group effects, such as peer pressure, and exhibit higher-order dynamical effects resulting from a complex interplay between initial states, hypergraph topology, and nonlinearity of the update.
Differently from linear averaging dynamics on graphs with two-body interactions, this model does not converge to the average of the initial states but rather induces a shift.
By assuming random initial states and by making some regularity and density assumptions on the hypergraph, we prove that the dynamics converge to a multiplicatively-shifted average of the initial states, with high probability. 
We further characterize the shift as a function of two parameters describing the initial state and interaction strength, as well as the convergence time as a function of the hypergraph structure.

\end{abstract}

\vfill 
\subsubsection*{Acknowledgments}
{\small The authors acknowledge Li Chen for a helpful discussion about nonlinear parabolic partial differential equations.}

\subsubsection*{Funding}
{\small Department of Computer Science, University of Salzburg. This work is supported by the Austrian Science Fund (FWF): P 32863-N. This project has received funding from the European Research Council (ERC) under the European Union’s Horizon 2020 research and innovation programme (grant agreement No 947702).}
\newpage

\section{Introduction}

Complex networks are powerful tools to model a variety of phenomena in natural and social sciences, ranging from biological systems to social networks. 
A common approach to study the structure and dynamics of complex networks is to assume that the interactions among its entities are pairwise, thus describing them by the edges of an underlying graph~\cite{BOCCALETTI2006175}.
However, this assumption may neglect important dynamical effects that arise from \textit{higher-order interactions}, i.e., interactions that involve more than two \nodes at a time. 
Higher-order interactions consider the group effect as a whole so that it cannot be derived by a combination of pairwise interactions.
Diverse real-world systems cannot accurately be described by pairwise interactions given that fundamental interplay takes place at a collective level, for example in ecology~\cite{billick1994higher}, particle physics~\cite{RevModPhys.85.197}, neuroscience~\cite{schneidman2006weak}, and social network science~\cite{sekara2016fundamental,patania2017shape}.

A natural way of modeling higher-order interactions is that of representing them using simplicial complexes~\cite{BATTISTON20201}, which glue together simplices such as points, segments, triangles, tetrahedrons, and their higher-dimensional counterparts. Alternatively, they can be modeled through the use of hypergraphs, generalizations of graphs where hyperedges can connect any number of \nodes.
Several types of dynamical processes have been recently studied on these higher-order interaction models, such as social contagion~\cite{iacopini2019simplicial}, opinion dynamics~\cite{PhysRevE.104.024316,PhysRevE.101.022305}, oscillator synchronization~\cite{gambuzza2021stability}, and random walks~\cite{PhysRevE.101.022308}.
Multi-body interaction systems employing simplicial complexes can take advantage of powerful algebraic structures. However, the resulting processes are defined between simplices of varying dimensions, rather than solely between \nodes as is typically the case in (hyper)graphs.
The modeling using simplicial complexes gives rise to challenges in both the analysis and the interpretability of the models~\cite{BATTISTON20201}.

Opinion formation and social learning processes have been the focus of extensive research in the area of complex networks. 
One of the first, and arguably simplest, models for these kinds of phenomena was introduced by DeGroot~\cite{degroot1974reaching}.
The original process works as follows: it is given a graph of $n$ interconnected agents (possibly edge-weighted and directed); each agent holds an initial opinion on some subject (a scalar value); 
in each discrete-time step, the agents update their opinions by performing an average of the opinions of their neighbors.
It is proven that, for connected and non-bipartite graphs, such a process eventually reaches a \textit{consensus}, namely the agents agree on the same opinion, which turns out to be the degree-weighted average of the initial opinions of the agents.
DeGroot's model has been widely investigated in physics, computer science, and more in general in the theory of social networks.
It has been extended in several ways, most notably by Friedkin and Johnsen~\cite{friedkin1990social} which assume that agents have some immutable internal opinion that affects the averaging process.
A combination of the two models has also been considered to describe self-appraisal, social power, and interpersonal influences~\cite{doi:10.1137/130913250}.
We point the reader to a tutorial on the modeling and analysis of opinion dynamics on social networks~\cite{PROSKURNIKOV201765,PROSKURNIKOV2018166}.
We remark that the averaging process is the other side of the coin of a diffusion (or random walk) process on the same graph: in the former, \nodes ``pull'' information from their neighbors; in the latter, a mass on the \nodes is ``pushed'' to their neighbors.
This matter is discussed in more detail in \cref{sec: heat}.

Continuous-time models of opinion formation with higher-order interactions have been recently developed over simplicial complex structures~\cite{deville2021consensus} and hypergraphs~\cite{doi:10.1137/21M1399427,PhysRevE.101.032310,Sahasrabuddhe_2021}. 
As discussed earlier, we focus on hypergraphs for their simplicity and interpretability.
In particular, three-body dynamical systems are considered in~\cite{PhysRevE.101.032310}, namely on hypergraphs comprising triples of \nodes.
In the model, each \node updates its state as a function of the combined values of all three \nodes in each hyperedge.
That paper proves that linear dynamics on hypergraphs can be rewritten as standard dynamics with two-body interaction on a graph, namely coinciding with DeGroot's model.
Such observation puts emphasis on the fact that nonlinear functions are necessary to capture higher-order effects induced by multi-body interactions.
In the same article, specific nonlinear functions have been considered to describe consensus models with reinforcing group effects such as peer pressure and stubbornness. 
Unlike DeGroot's model, it is experimentally shown that the resulting dynamics can cause shifts away from the average system state. 
The nature of these shifts depends on a complex interplay between the distribution of initial states, underlying structure, and form of interaction function.

Such a framework has been extended to include multi-body interactions on general hypergraphs, i.e., among heterogeneous groups of \nodes instead of triples~\cite{Sahasrabuddhe_2021}.
Further extensions of the model investigate consensus dynamics on temporal hypergraphs that encode network systems with time-dependent, multi-way interactions~\cite{PhysRevE.104.064305}.
We remark that these processes are also strictly linked to graph neural network dynamics. Such a relationship will be discussed more in detail in \cref{sec: gnns}.

The dynamics with multi-body interactions on hypergraphs we have previously discussed have mostly been analyzed experimentally, with no analytical results on their convergence.
In fact, to the best of our knowledge, there are only few rigorous analytical results about dynamics with multi-body interaction on hypergraphs.
In~\cite{shang2022system}, it is provided a first rigorous analysis of both discrete-time and continuous-time dynamics with gravity-like and Heaviside-like interactions on 3-uniform hypergraphs, i.e., with three-body interaction. However, the analysis shows that the investigated model boils down to the linear case where the consensus value is the average of the initial states of the \nodes.
In \cite{10.1093/comnet/cnad009}, a nonlinear consensus dynamics on temporal hypergraphs with random and noisy three-body interactions is analyzed, and sufficient conditions for all \nodes in the network to reach consensus are provided.

\paragraph{Our contribution}
In this paper, we analyze a class of discrete-time nonlinear dynamics with three-body interactions on hypergraphs that are inspired by the consensus model introduced in~\cite{PhysRevE.101.032310}. 
The class of dynamics we consider reads as follows
\begin{equation}\label{eq: dynamics formulation intro}
	\state{t+1}_i = \state{t}_i + \sum_{ \{j,k\} \in \nbr{i}} \frac{s(\lambda|\state{t}_j-\state{t}_k|)}{\snorm{t}_i} \cdot
	\frac{(\state{t}_j-\state{t}_i)+(\state{t}_k-\state{t}_i)}{2},
\end{equation}
where we denote by $\state{t}_i$ the state of the \node $i$ at time $t$ and $s(\lambda|\state{t}_j-\state{t}_k|)$ encodes the nonlinear interaction; moreover, $N_i$ is the set of neighboring pairs connected to \node $i$.
The dynamics is described in full detail in \cref{eq: dynamics formulation}.
Roughly speaking, the process goes as follows. 
A set of $n$ agents are interconnected by a 3-uniform hypergraph, i.e., hyperedges are triples of \nodes.
Analogously to the DeGroot model, in each discrete-time step every agent updates its opinion, or \textit{state}, by performing an average of the states of its neighboring pairs; however, the average is weighted by a nonlinear function of the neighbors states that is not present in the DeGroot model. 
We consider a large class of nonlinear functions that include, for example, some modeling choices made in~\cite{PhysRevE.101.032310} to capture a group reinforcement effect.

By considering a mean-field model where \nodes are interconnected by a trivial topology, we first analytically derive the consensus value the agents converge to (\cref{prop: trivial topology}).
Then, as our main result (\cref{theorem: main}), we prove that the mean-field model is a good approximation of the actual dynamics, whenever the underlying hypergraph satisfies some density and regularity assumptions and the initial states of the agents follow some probability distribution. 
In fact, we prove that the leading order of the consensus value is the same as that of the mean field model, with high probability.
Additionally, we bound the convergence time of the dynamics as a function of the hypergraph's structure.

As empirically observed in~\cite{PhysRevE.101.032310}, the dynamics we analyze converge to a consensus value that, when compared to the DeGroot model, is shifted.
Quite remarkably, we exactly characterize such a shift and rigorously prove its dependency on the initial states and on the three-body interaction function, which have an effect on both the magnitude and direction of the shift (\cref{eq: shift NML20}).

In order to prove our main result, we proceed as follows. 
First, we exactly characterize the nonlinear effect after the first interaction, that produces a multiplicative shift from the average of the initial states of the \nodes.
Second, we perform a Taylor expansion of the dynamics up to the second order to keep track of the dependency of the nonlinear function in the update. 
Third, we analyze the linear and nonlinear terms separately: we prove that the linear contribution is responsible for the convergence to a consensus, while the leading order of the shift is produced after the first interaction; we prove that the residual nonlinear contribution deriving from subsequent interactions, instead, is negligible.
We remark that our analysis holds with high probability: we exploit the randomness of the initial states of the \nodes both to characterize the shift in the consensus value and to prove that the residual nonlinear contribution is of a smaller order.

Moreover, we prove that there exist hypergraphs that satisfy the assumptions of our main theorem (\cref{prop: erh}).
In particular, such a class of hypergraphs is a natural extension of Erd\H{o}s-R\'{e}nyyi random graphs.
In order to prove that they satisfy the assumptions of our main theorem, of independent interest, we provide bounds on the relation of the spectra of different matrix representations of a graph.

Finally, we run simulations hinting that our result might hold even for hypergraph topologies that are not covered by \cref{theorem: main}.

\subsection{Link to diffusion equations}\label{sec: heat}

This subsection establishes a connection between the diffusion equation and our proposed model. 
In order to accomplish this, we consider a graph where the set of \nodes is the $d$-dimensional integer lattice or the quotient space of the $d$-dimensional integer lattice. 
The two spaces will have roles to derive partial differential equations on the $d$-dimensional real coordinate space $\mathbb{R}^d$ and on the $d$-dimensional torus $\mathbb{T}^d$, respectively. 
We approach this section similarly to how it is done in numerical methods (see, e.g., \cite{doi:10.1137/9780898719987}).
In particular, we will examine two distinct cases: linear and nonlinear interactions.
In the former, the dynamics reduces (see \cite[Chapter 20]{press2007numerical}) to a linear dynamics on a graph, studied in \cref{prop: linear interaction}.
In the latter, although there are no results yet, we expect that the nonlinear dynamics on hypergraphs is a discretized version of a nonlinear heat equation, which is explained in \cref{sec:nonLinearinteractions}. 
By numerical simulations (see \cref{sec: simulations}), we can observe that the convergence behavior of the nonlinear dynamics on hypergraphs is similar to that described in \cref{theorem: main}.
However, the assumptions of \cref{theorem: main} are not satisfied by the lattice hypergraph used in the discretization. 
Therefore we cannot translate our convergence result to the nonlinear diffusion equation as we can in the linear case.
For future perspectives, one can then try to relax the assumptions on the hypergraphs in our main result \cref{theorem: main}
in order to study the aforementioned nonlinear diffusion equation.

\subsubsection{Linear interactions}\label{sec:Linearinteractions}
Consider a lattice graph $G=(V,E)$ with $V=(h\mathbb{Z})^d$ for some $h>0$ (or, similarly, $V=(h\mathbb{Z}/Lh\mathbb{Z})^d$ for some positive integer $L$) and $E=\big\{\{i,i+e_j\} : i\in V,\, e_j \in \{0,1\}^d\big\}$, where $e_j$ denotes the $j$-th element of canonical basis of $\mathbb{R}^d$ (or $\mathbb{T}^d$).
Consider the dynamics given in \cref{eq: dynamics formulation intro} with $s(x)=1$ or, equivalently, with $\lambda=0$ (see \cref{assumption: function s}); 
by using the definition of the edge set $E$ on the lattice graph,  we get
\begin{equation}\label{eq: linear heat}
    \state{t+1}_i - \state{t}_i 
    = \frac{1}{2d} \sum_{ j \in \nbr{i}} \frac{\state{t}_j-\state{t}_i}{2}
    = \frac{1}{d} \sum_{j=1}^{d} \frac{\frac{\state{t}_{i-e_j}-\state{t}_i}{2} - \frac{\state{t}_i - \state{t}_{i+e_j}}{2}}{2},
\end{equation}
where we used that $j \in N_i$ implies that $x_j^{(t)} - x_i^{(t)} = x_{i-e_j}^{(t)} - x_i^{(t)}$.
By rescaling the time with respect to $(h/2)^2$ and the size of the lattice with respect to $(h/2)$, and by taking the limit $h\to 0$, we get the following diffusion equation, also known as \textit{heat equation} on $\mathbb{R}^d$ (or $\mathbb{T}^d$):
\begin{equation}\label{eq: heat-equation}
\partial_{t} x(t,z)=-C_d\,\Delta_z x(t,z)
\end{equation}
where $x:\mathbb{R}\times\mathbb{R}^d\,(\text{or }\mathbb{R}\times\mathbb{T}^d) \to \mathbb{R}$ is a solution of the diffusion equation, $C_d$ is a constant depending on the dimension, and $\Delta_z$ denotes the Laplace operator.

In essence, in the aforementioned limit, the dynamics in \cref{eq: heat-equation,eq: dynamics formulation intro} are equivalent when the underlying graph $G$ is the $d$-dimensional lattice or torus. 
Moreover, in the case of a compact domain (e.g., in $\mathbb{T}^d$), the solution of the heat/diffusion equation ultimately attains equilibrium, resulting in a uniform temperature distribution across all positions. 
Such a behavior corresponds to the consensus reached by the agents in the DeGroot model as described in our results with \cref{prop: linear interaction,theorem: main}.

\subsubsection{Nonlinear interactions}
\label{sec:nonLinearinteractions}

Consider a lattice hypergraph $\Gamma=(V,H)$ with $V=\mathbb{Z}^d$ (or $(\mathbb{Z}/L\mathbb{Z})^d$) and $H=\big\{\{i-e_j,\,i,\,i+e_j\} : i\in V,\, e_j\in \{0,1\}^d\big\}$. Here, as before, $e_j$ denotes the $j$-th element of the canonical basis of $\mathbb{R}^d$ (or $\mathbb{T}^d$). 
Similarly to the linear case, consider the dynamics given in \cref{eq: dynamics formulation}, but with a nonlinear function $s$ or, equivalently, with $\lambda \neq 0$ (see \cref{assumption: function s}); 
by rearranging the terms, we have
\begin{align*}
    \state{t+1}_i - \state{t}_i
    &= \frac{1}{d} \sum_{ \{j,k\} \in \nbr{i}} 
    s\big(\lambda|\state{t}_j-\state{t}_k|\big) \cdot
    \frac{(\state{t}_j-\state{t}_i) - (\state{t}_i - \state{t}_k)}{2}
    \\
    &= \frac{1}{d} \sum_{j=1}^{d} 
    s\big(\lambda|\state{t}_{i-e_j}-\state{t}_{i+e_j}|\big) \cdot 
    \frac{\frac{\state{t}_{i-e_j}-\state{t}_i}{2} - \frac{\state{t}_i - \state{t}_{i+e_j}}{2}}{2},
\end{align*}
where we proceeded similarly as in \cref{eq: linear heat}.
To simplify the following discussion, we assume that $s$ is a non-negative increasing function.
Similarly to the previous analysis (refer to the comparison between \cref{eq: linear heat,eq: heat-equation}), we can approximate continuous differentials by discrete differences. This can be done by the introduction of a scaling parameter $h$, which scales both time and distances between \nodes, as in \cref{sec:Linearinteractions}. 
Note that, in this context, we also need to rescale $\tilde{\lambda} = 2h \lambda$, to have comparable dynamics. Then, by taking formal limit as $h \to 0$, we obtain
\[
    \partial_{t} x(t,z)
    = -C_d\sum_{j=1}^d s(\tilde{\lambda} |\partial_{z_j} x(t,z)|)\,\partial_{z_j}^2 x(t,z),
\]
where $x:\mathbb{R}\times\mathbb{R}^d\to \mathbb{R}$ (or $\mathbb{R}\times\mathbb{T}^d \to \mathbb{R}$) is a solution of diffusion equation and $C_d$ is a constant depending on the dimension $d$ of the space.
Note that this is a parabolic equation with a modified Laplace operator. To be more specific, the Laplacian is modified as follows:
\[
    \Delta_z := \sum_{j=1}^{d} \partial_{z_j}^2 \quad\longrightarrow\quad
    \widetilde{\Delta}_z := \sum_{j=1}^{d} s(\tilde{\lambda} |\partial_{z_j} \cdot |) \partial_{z_j}^2
\]
and therefore the dynamics becomes
\begin{equation}\label{eq: tilde lapl}
    \partial_{t} x(t,z) = -C_d \, \widetilde{\Delta}_z x(t,z).
\end{equation}

In this case, $\widetilde{\Delta}_z$ modifies the diffusion rate that is now governed by a function of the absolute value of the gradient.
When $\lambda<0$, the diffusion process gives more weight to the regions with larger support and small moduli of the temperature gradients.
This potentially results in a degenerate solution; hence, it might be hard to prove the well-posedness of the partial differential equation. 
In contrast, if $\lambda>0$ the diffusion gives more weight to the directions where the moduli of the temperature gradients are big.
This leads to a standard quasi-linear parabolic partial differential equation.
Employing the maximum principle and the comparison principle, one could derive that, if $\|x(t,\cdot)\|_{L^\infty} < C$ uniformly in time $t$, there exists a unique solution depending on the given initial data. 
However, to the best of our knowledge, this has not been studied in the partial differential equations literature and could be an interesting open research direction.

Considering the link between \cref{eq: dynamics formulation intro} and \cref{eq: tilde lapl}, we expect the diffusion dynamics on $\mathbb{R}^d$ or $\mathbb{T}^d$ to have a similar behavior to that described in our main result.
In particular, we expect to observe a shift from the average temperature in the equilibrium distribution whose direction depends on the sign of $\lambda$ and that could generate heat gain/loss. 
However, a direct translation of the concepts of initial majority and balance (i.e., the directions of the shift described by our main result) to the continuous domain is not immediate.

There exists related literature in the area of partial differential equations.
For example, there are many works considering the following advection-diffusion equation (e.g., \cite{brezis1979uniqueness,Volpert_1969,Jennifer2021}),
\[
    \partial_t x(t,z) + \nabla_z \big(\, f(x(t,z),z) - k(x(t,z))\, \nabla_z x(t,z) \,\big) = 0.
\]
If one considers $k(x(t,z)) = s( \lambda |\nabla_z x(t,z)|)$ and $\nabla_z \, f(x(t,z),z) = \nabla_z\,k(x(t,z)) \cdot \nabla_z\, x(t,z)$, then the advection-diffusion equation turns similar to the limiting dynamics given above.

\subsection{Link to graph neural networks}\label{sec: gnns}

In this section, we provide an overview of some applications of nonlinear averaging and diffusion processes on hypergraphs, focusing on their significance for \textit{graph neural networks} (GNNs). 
GNNs are semi-supervised machine learning methods that process data represented as graphs and can be used for \node/edge classification tasks, see, e.g., \cite{hamilton2020graph}. 
They sequentially update a set of features of the graph through a message-passing framework that can be described as follows.
Let $G=(V,E)$ be a graph and, for each \node $i\in V$, let $N_{i}$ be the set of neighbors of \node $i$. Additionally, let $\mathbf {x}_{i}$ be the vector of features of \node $i \in V$. 
Graph- and edge-level features can be also considered, but we omit them for simplicity.
An update for a \node $i\in V$ in a GNN is expressed as follows:
\[
    \mathbf{h}_{i} = \phi \left(
        \mathbf{x}_{i}, 
        \bigoplus_{j\in N_{i}} \psi (\mathbf{x}_{i}, \mathbf{x}_{j} )
    \right)
\]
where $\phi$ is an \textit{update function}, $\psi$ is a \textit{message function}, and $\oplus$ is a permutation-invariant \textit{aggregation operator}.%
\footnote{Here, the symbol $\oplus$ does not simply denote a direct sum, but it also includes an aggregation function.}
The functions $\phi$ and $\psi$ are differentiable functions and can also be implemented by artificial neural networks. 
The operator $\oplus$ can accept an arbitrary number of inputs and can be, e.g., an element-wise sum, a mean, or a max.
Note that $\mathbf{h}_{i}$ is a new representation of the features of \node $i$. Hence the message-passing framework of the GNN can be iterated by applying the previous equation to the new feature vectors, resulting in a discrete-time dynamics over the \nodes of the graph.

Such a framework can naturally be extended to hypergraphs.
Formally, let $\Gamma=(V,H)$ be a hypergraph. 
For clarity of exposition, we describe it for 3-uniform hypergraphs, i.e., such that each hyperedge has cardinality three. For each \node $i\in V$, let $N_{i}$ be the set of neighboring pairs of \node $i$.
Calling $\mathbf {x}_{i}$ the vector of features of \node $i \in V$, the update of $i$ can be described as:
\begin{equation}\label{eq: hgnn}
    \mathbf{h}_{i} = \phi \left(
        \mathbf{x}_{i}, 
        \bigoplus_{\{j,k\}\in N_{i}} \psi (\mathbf{x}_{i}, \mathbf{x}_{j}, \mathbf{x}_{k} )
    \right)
\end{equation}
where $\phi,\psi,\oplus$ are as previously described.
Analogously to the graph setting, $\mathbf{h}_{i}$ represents a new vector of features of \node $i$ and, hence, the update can be iterated. The iteration describes a discrete-time dynamics over the hypergraph.

There exist several examples of scientific literature that consider nonlinear diffusion processes on hypergraphs, fitting the GNN framework, in several contexts.
For example, for opinion dynamics (as described in the introduction)~\cite{PhysRevE.101.032310} and chemical reaction networks~\cite{doi:10.1080/00207179.2015.1095353} using exponential and logarithmic message functions; 
for network oscillators~\cite{schaub2016graph} using trigonometric message functions;
for semi-supervised machine learning~\cite{10.1145/3442381.3450035,pmlr-v162-prokopchik22a} using polynomials message functions.

The dynamics that we analyze in this paper, described in \cref{eq: dynamics formulation intro}, also fit in the GNN message passing framework.
In particular, consider the following update, message, and aggregator functions (for some fixed function $s$ and parameter $\lambda$): 
\begin{enumerate}
    \item $\phi(\mathbf{x}, \mathbf{y}) = \mathbf{x}+\mathbf{y}$;
    \item $\displaystyle \psi(\mathbf{x}_i, \mathbf{x}_j, \mathbf{x}_k) = \frac{s(\lambda |\mathbf{x}_j- \mathbf{x}_k|)}{z_i} \cdot \frac{(\mathbf{x}_j- \mathbf{x}_i)+(\mathbf{x}_k-\mathbf{x}_i)}{2}$, 
    where $z_i = \sum_{\{j,k\}\in \nbr{i}} s(\lambda |\mathbf{x}_j- \mathbf{x}_k|)$;
    \item $\displaystyle \bigoplus_{\{j,k\}\in\nbr{i}} \psi(\mathbf{x}_i, \mathbf{x}_j, \mathbf{x}_k) = \sum_{\{j,k\}\in\nbr{i}} \psi(\mathbf{x}_i, \mathbf{x}_j, \mathbf{x}_k)$.
\end{enumerate}
Note that the above choice of functions $\phi,\psi,\oplus$ make \cref{eq: dynamics formulation intro} and \cref{eq: hgnn} coincide, i.e., our dynamics can be interpreted as a GNN.
Especially, we remark that our analysis works for functions $s$ that are analytic at 0 (see \cref{assumption: function s}). 
This implies that the standard softmax function can be considered as message function $\psi$ in our dynamics (see \cref{rem: exponential}), as well as many other classes of functions such as exponential, logarithmic, trigonometric, and polynomials.

\subsection{Structure of the paper}
The remainder of this article is structured as follows.
In \cref{sec:notation definitions} we describe the dynamics and introduce basic notations.
In \cref{sec: lin dyn} we briefly review known results for the linear counterpart of the dynamics.
\cref{sec: nonlinear dyn} is the heart of the article and where we consider the actual nonlinear dynamics. We start by providing a simple mean-field analysis on a trivial hypergraph topology. Then we consider the dynamics on actual hypergraphs, state our main theorem, and discuss the needed assumptions.

The remaining sections delve into the technical details.
In \cref{sec: nonlinear effect} we characterize the higher-order effect deriving from the interplay between the hypergraph topology and the nonlinearity in the update.
In \cref{sec: rademacher initial state} we discuss the effect of the randomness of the initial states.
In \cref{sec: erdos renyi hypergraph}, we show the existence of a class of hypergraphs that satisfy the assumptions of our main theorem.
Finally, in \cref{sec: simulations} we run numerical simulations of the dynamics on two hypergraph topologies.
\section{Notation and definitions}\label{sec:notation definitions}

In this section, we formally describe the structure and dynamics analyzed in the paper and we introduce further notation and definitions used throughout the paper.

\paragraph{Hypergraph}
Let $\Gamma=(V,H)$ be a hypergraph, where $V = \{1,\ldots,n\}$ is the set of \nodes and $H$ is the set of hyperedges, i.e., each hyperedge $e\in H$ is such that $e \subseteq V$ and $e \neq \emptyset$.
Herein we assume $\Gamma$ to be $3$-uniform, namely each hyperedge $e \in H$ has cardinality $|e|=3$; and we assume $\Gamma$ to be simple, namely each hyperedge $e\in H$ consists of distinct \nodes.
We call $A$ the adjacency tensor of $\Gamma$, namely the 3-dimensional matrix such that
\[
A_{ijk} = \begin{cases}  
1 &\text{if } \{i,j,k\}\in H, 
\\ 
0  &\text{otherwise.}
\end{cases}
\]
Note that, since $\Gamma$ is simple, for every $i,j$ it holds that $A_{iii} = A_{iij} = A_{iji} = A_{jii} = 0$ (since the hyperedges are made of distinct \nodes)
and that, for every $i,j,k\in V$, $A_{ijk}=A_{ikj}=A_{jik}=A_{jki}=A_{kij}=A_{kji}$ (since the hyperedges are unordered sets).
Moreover, for each \node $i \in V$ we call the neighborhood of $i$ the set
\begin{equation}\label{eq: def Ni}
    \nbr{i} := \{ \{j,k\} : \{i,j,k\} \in H \},
\end{equation}
namely the set of pairs of \nodes that share a hyperedge with $i$.
For each \node $i \in V$, we call $|\nbr{i}|$ the degree of \node $i$.

\paragraph{Dynamics}
We denote by $\state{t}_i$ the state of \node $i$ at time $t$.
Starting from an initial state $\state{0}_i$, the dynamics evolves in synchronous discrete-time steps in which all \nodes simultaneously update their state according to a function of the states of their neighbors.
Formally, for a fixed parameter%
\footnote{The range of $\lambda$ is restricted for the sake of simplicity. See \cref{remark: range of lambda} for a more detailed discussion on this.}
$\lambda \in (-1/2,1/2)$, in each time step $t$ each \node $i \in V$ decides its state in the next step as follows:
\begin{equation}\label{eq: dynamics formulation}
    \state{t+1}_i = \state{t}_i + \sum_{ \{j,k\} \in \nbr{i}} \frac{s(\lambda|\state{t}_j-\state{t}_k|)}{\snorm{t}_i} \cdot
    \frac{(\state{t}_j-\state{t}_i)+(\state{t}_k-\state{t}_i)}{2},
\end{equation}
where $s: \mathbb{R} \rightarrow \mathbb{R}$ is a function that models the strength of the interaction between \node $i$ and its pair of neighbors $\{j,k\}$, 
while $\snorm{t}_i$ is a normalization factor, i.e., 
\begin{equation}\label{eq:normalization}
    \snorm{t}_i \coloneqq \sum_{ \{j,k\} \in \nbr{i}} s(\lambda|\state{t}_j-\state{t}_k|).
\end{equation}
\begin{assumption}[Assumptions on the nonlinear function $s$]\label{assumption: function s}
We assume the function $s$ appearing in \cref{eq: dynamics formulation,eq:normalization} to be analytic at 0, namely with a convergent power series expansion such that 
\[
    s(x)=\sum_{k=0}^\infty a_k x^k,
\]
for coefficients $a_0,a_1,\dots$
In this setting, it is natural to consider $a_0=1$ because when $\lambda=0$ we want $s\equiv 1$ so that the dynamics has no three-body interaction forces.
Moreover, for the sake of simplicity, we also assume $a_1=1$ given that a different value only results in a rescaling. We don't make assumptions on the other coefficients since we will use the expansion up to the second order.
\end{assumption}

\begin{remark}\label{rem: exponential}
There exist three-body interaction strengths modeled by functions that are analytic at 0.
For example, in~\cite{PhysRevE.101.032310}, it is chosen $s(x)=e^{x}$ or, to be more precise, for $i \in V$ and $\{j,k\} \in \nbr{i}$ we have
\[
    s(\lambda|\state{t}_j-\state{t}_k|) = e^{\lambda|\state{t}_j-\state{t}_k|}.
\]
\end{remark}

Note that the dynamics described in \cref{eq: dynamics formulation} can be equivalently written as follows:
\begin{equation}\label{eq:dynamics clean}
    \state{t+1}_i
    = \frac{1}{\snorm{t}_i} \sum_{ \{j,k\} \in \nbr{i}} {s(\lambda|\state{t}_j-\state{t}_k|)} \cdot \frac{\state{t}_j+\state{t}_k}{2}.
\end{equation}
When the dynamics is written in this form, it appears more evident that the \nodes update their states by performing a weighted average, where the weights are a nonlinear function that depends on the states of their neighbors and which are evolving in every time step.

In this paper, we consider a binary initialization, namely $\state{0}_i \in \{-1,+1\}$, for every $i\in V$.
Note that the choice of the binary values to be in $\{-1,+1\}$ is not a restriction of the analysis, in the following sense.

\begin{proposition}
Let $\mathsf{s}_\downarrow , \mathsf{s}_\uparrow \in \mathbb{R}$ and such that $\mathsf{s}_\downarrow < \mathsf{s}_\uparrow$.
For every $i\in V$, let $y^{(0)}_i\in \{\mathsf{s}_\downarrow ,\mathsf{s}_\uparrow\}$ and let $\tilde\lambda \in (-\frac{1}{\mathsf{s}_\uparrow-\mathsf{s}_\downarrow },\frac{1}{\mathsf{s}_\uparrow-\mathsf{s}_\downarrow })$.
Let 
\[ 
    \state{0}_i = \frac{2}{|\mathsf{s}_\downarrow -\mathsf{s}_\uparrow|} \cdot \Big(y^{(0)}_i - \frac{\mathsf{s}_\downarrow +\mathsf{s}_\uparrow}{2}\Big),
    \qquad\text{and }
    \lambda = \tilde\lambda \cdot \frac{|\mathsf{s}_\downarrow - \mathsf{s}_\uparrow|}{2}.
\]
For every $i\in V$, let $y^{(t)}_i$ and $x^{(t)}_i$ update as in \cref{eq:dynamics clean} with parameters $\tilde\lambda$ and $\lambda$, respectively. 
It holds that, for all $t\geq 0$
\begin{equation}\label{eq: y(t)}
    y^{(t)}_{i}  = \frac{|\mathsf{s}_\downarrow - \mathsf{s}_\uparrow|}{2}\cdot \state{t}_i + \frac{\mathsf{s}_\downarrow +\mathsf{s}_\uparrow}{2}.
\end{equation}
\end{proposition}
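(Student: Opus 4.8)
The plan is to prove \cref{eq: y(t)} by induction on $t$, exploiting the fact that one step of the update \cref{eq:dynamics clean} is equivariant under an affine rescaling of the states, provided the interaction parameter is rescaled to compensate. Write $c := |\mathsf{s}_\downarrow - \mathsf{s}_\uparrow|/2 = (\mathsf{s}_\uparrow - \mathsf{s}_\downarrow)/2 > 0$ and $m := (\mathsf{s}_\downarrow + \mathsf{s}_\uparrow)/2$, so that the definitions in the statement read $\state{0}_i = (y^{(0)}_i - m)/c$ and $\lambda = \tilde\lambda\, c$; equivalently $y^{(0)}_i = c\,\state{0}_i + m$, which is exactly \cref{eq: y(t)} at $t=0$ and settles the base case. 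As a consistency check, the assumed range $\tilde\lambda \in (-1/(\mathsf{s}_\uparrow - \mathsf{s}_\downarrow),\, 1/(\mathsf{s}_\uparrow - \mathsf{s}_\downarrow))$ forces $|\lambda| = |\tilde\lambda|\,(\mathsf{s}_\uparrow - \mathsf{s}_\downarrow)/2 < 1/2$, so the rescaled $x$-dynamics is admissible in the sense of \cref{eq: dynamics formulation}.

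For the inductive step I would assume $y^{(t)}_j = c\,\state{t}_j + m$ for every $j \in V$. The crucial observation is that the pairwise differences inside $s$ are scale-covariant: $y^{(t)}_j - y^{(t)}_k = c\,(\state{t}_j - \state{t}_k)$, and since $c > 0$ this gives $\tilde\lambda\,|y^{(t)}_j - y^{(t)}_k| = \tilde\lambda\, c\, |\state{t}_j - \state{t}_k| = \lambda\,|\state{t}_j - \state{t}_k|$. Hence the nonlinear weight $s(\tilde\lambda\,|y^{(t)}_j - y^{(t)}_k|)$ of the $y$-dynamics coincides, hyperedge by hyperedge, with the weight $s(\lambda\,|\state{t}_j - \state{t}_k|)$ of the $x$-dynamics, and in particular the two normalization factors agree. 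Denoting this common weight by $w_{jk}$ and using $(y^{(t)}_j + y^{(t)}_k)/2 = c\,(\state{t}_j + \state{t}_k)/2 + m$, I would substitute into \cref{eq:dynamics clean} and factor out the affine map:
\[
    y^{(t+1)}_i
    = \frac{\sum_{\{j,k\}\in \nbr{i}} w_{jk}\big(c\,\tfrac{\state{t}_j + \state{t}_k}{2} + m\big)}{\sum_{\{j,k\}\in \nbr{i}} w_{jk}}
    = c \cdot \state{t+1}_i + m ,
\]
where the additive constant $m$ survives precisely because the weights are normalized to sum to $\snorm{t}_i$. This closes the induction.

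There is no substantial obstacle here: the content of the statement is simply that the chosen affine change of variables commutes with one step of the dynamics. The only two points worth stating carefully are (i) that the modulus together with $c>0$ lets the positive scalar $c$ pass out of the argument of $s$, so that the choice $\lambda = \tilde\lambda\, c$ exactly cancels it and keeps the weights identical across the two processes; and (ii) that the normalization by $\snorm{t}_i$ is what makes the additive shift $m$ invariant under the update, since $m\sum_{\{j,k\}} w_{jk}/\sum_{\{j,k\}} w_{jk} = m$. Both are immediate once the common weight $w_{jk}$ has been isolated.
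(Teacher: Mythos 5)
Your proof is correct and follows essentially the same route as the paper's: both rest on the observation that the affine relation makes the nonlinear weights coincide (since $\tilde\lambda\,|y^{(t)}_j - y^{(t)}_k| = \lambda\,|\state{t}_j-\state{t}_k|$) and that the additive shift survives the update because the weights are normalized. The only cosmetic difference is that you phrase the iteration as a formal induction, whereas the paper computes the step $t=0\to t=1$ explicitly and then notes that the calculation repeats verbatim for all $t$.
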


\begin{proof}
Note that $\state{0}_i$ and $\lambda$ are such that $\state{0}_i \in \{-1,+1\}$ and $\lambda \in (-1/2,1/2)$.
Following the definition of $\snorm{t}_i$ in \cref{eq:normalization}, let us define
\[
    \tilde{z}^{(t)}_i := \sum_{\{j,k\}\in \nbr{i}} {s(\tilde\lambda|y^{(t)}_j-y^{(t)}_k|)}
\]
and note that
\[
    \tilde{z}^{(0)}_i = \sum_{\{j,k\}\in \nbr{i}} {s(\tilde\lambda|y^{(0)}_j-y^{(0)}_k|)}
    =\sum_{\{j,k\}\in \nbr{i}} {s(\lambda|\state{0}_j-\state{0}_k|)}
    = \snorm{0}_i.
\]
Hence, for all $t\geq 0$, we have
\begin{align*}
    y^{(1)}_{i} 
    &= \frac{1}{\tilde{z}^{(0)}_i} \left(
        \sum_{\{j,k\}\in\nbr{i}} {s(\tilde\lambda|y^{(0)}_j-y^{(0)}_k|)} \cdot  \frac{(y^{(0)}_{j}+y^{(0)}_{k})}{2}
    \right)
    \\
    &= \frac{1}{\snorm{0}_i} \left(
        \sum_{\{j,k\}\in\nbr{i}} {s(\lambda|\state{0}_j-\state{0}_k|)} \cdot\Bigg( \frac{|\mathsf{s}_\downarrow - \mathsf{s}_\uparrow|}{2} \cdot  \frac{(\state{0}_{j}+\state{0}_{k})}{2}+\frac{\mathsf{s}_\downarrow+\mathsf{s}_\uparrow}{2}\Bigg)
    \right)
    \\
    &= \frac{|\mathsf{s}_\downarrow - \mathsf{s}_\uparrow|}{2}\cdot \frac{1}{\snorm{0}_i}\left(
        \sum_{\{j,k\}\in\nbr{i}} {s(\lambda|\state{0}_j-\state{0}_k|)} \cdot  \frac{(\state{0}_{j}+\state{0}_{k})}{2}
    \right)
    +\frac{\mathsf{s}_\downarrow+\mathsf{s}_\uparrow}{2}
    \\
    &= \frac{|\mathsf{s}_\downarrow - \mathsf{s}_\uparrow|}{2}\cdot \state{1}_i + \frac{\mathsf{s}_\downarrow+\mathsf{s}_\uparrow}{2}.
\end{align*}
Then, since the scaling relation between $y^{(1)}_i$ and $\state{1}_i$ remains the same as that at time $t=0$, one can also see that $\tilde{z}^{(1)}_i = \snorm{1}_i$.
By iteratively repeating the calculation above, one sees that the same holds for every $t\geq 0$. Hence, 
\(
    y^{(t)}_{i}  = \frac{|\mathsf{s}_\downarrow - \mathsf{s}_\uparrow|}{2}\cdot \state{t}_i + \frac{\mathsf{s}_\downarrow+\mathsf{s}_\uparrow}{2}.
\)
\end{proof}

\paragraph{Motif graph}
Given a hypergraph $\Gamma=(V,H)$ with adjacency tensor $A$, we associate to it an edge-weighted graph $G=G(\Gamma)$ on the same \node set $V$ whose weighted adjacency matrix $W$ is defined as:
\begin{equation}\label{eq:graph W}
    W_{ij} := \sum_{k \in V} A_{ijk},
\end{equation}
namely each edge $\{i,j\}$ in $G$ is weighted by the number of \nodes that share an hyperedge with $i$ and $j$. 
Since $\Gamma$ is 3-uniform, the weight on $W_{ij}$ counts the number of triangles the \nodes $i,j$ belong to. Triangles are a typical pattern, a.k.a.\ \textit{motif}, appearing in graphs and for this reason, the graph $G$ is called the (triangle) \textit{motif graph} of $\Gamma$.
Note that $W$ is symmetric since $G$ is undirected, i.e., the hyperedges are unordered sets.

Moreover, we also associate to $G$ its diagonal degree matrix $D$, whose entry $D_{i\ell}$ is defined as
\begin{equation}\label{eq:graph D}
    D_{i\ell} := \delta_{i \ell} \cdot \sum_{j \in V} W_{ij}
    = \delta_{i \ell} \cdot \sum_{j,k \in V} A_{ijk}
    = \delta_{i \ell} \cdot 2|\nbr{i}|,
\end{equation}
where $\delta_{i \ell}$ is the Kronecker delta function and where the factor $2$ is due to the fact that the double sum counts ordered pairs, while the neighborhood consists of unordered pairs.
Note that if $\Gamma$ is $d$-regular (i.e., the neighborhood of every \node has size $d$) then $D = 2dI$, where $I$ is the identity matrix. 

Given the definitions of $W$ and $D$, or simply considering the (weighted) adjacency and degree matrices of any graph $G$, we can define the transition matrix of a random walk on $G$ as
\begin{equation}\label{eq: matrix P}
    P := D^{-1}W.
\end{equation}
Note that since $P$ is a row-stochastic matrix, i.e., every row sums up to $1$, its eigenvalues are $1 = \lambda_1(P) \ge \ldots \ge \lambda_n(P) \ge -1$.
Let us denote by $\nu$ the second largest (in absolute value) eigenvalue of $P$, i.e., 
\begin{equation}\label{eq: def nu}
    \nu := \max(|\lambda_2(P)|,\,|\lambda_n(P)|).
\end{equation}
Recall that $G$ is connected if and only if $\lambda_2(P) < 1$ and $G$ is not bipartite if and only if $\lambda_n(P) > -1$ (see, e.g., \cite{brouwer2011spectra}).
Hence, if $G$ is connected and not bipartite then $\nu<1$.

Let 
\[
    d_{\max} := \max_{i \in V} D_{ii} 
    \qquad\text{and }\quad
    d_{\min} := \min_{i \in V} D_{ii}.
\]
We define the ratio between maximum and minimum degrees as
\begin{equation}\label{eq: def delta}
    \Delta := \frac{d_{\max}}{d_{\min}}.
\end{equation}
Note that $\Delta$ is well defined for every connected graph since $\min_{i \in V} D_{ii}>0$.

\paragraph{Asymptotic notation}
Herein, we will make use of the Bachmann--Landau asymptotic notation, in the limit of $n\to\infty$.
Let $f,g : \mathbb{R}^+ \rightarrow \mathbb{R}^+$ be two positive real functions. We say that:
\begin{itemize}
    \item $f(n) = O(g(n))$ if $\lim\sup_{n\to\infty} \frac{f(n)}{g(n)} < \infty$ ($\exists k>0\, \exists N>0\, \forall n>N : \frac{f(n)}{g(n)} \le k$);
    \item $f(n) = \Omega(g(n))$ if $\lim\inf_{n\to\infty} \frac{f(n)}{g(n)} > 0$ ($\exists k>0\, \exists N>0\, \forall n>N : \frac{f(n)}{g(n)} \ge k$);
    \item$f(n) = \Theta(g(n))$ if $f(n)=O(g(n))$ and $f(n)=\Omega(g(n))$\\($\exists k_1,k_2>0\, \exists N>0\, \forall n>N : k_1 \le \frac{f(n)}{g(n)} \le k_2$);
    \item$f(n) = o(g(n))$ if $\lim_{n\to\infty} \frac{f(n)}{g(n)} = 0$ ($\forall k>0\, \exists N>0\, \forall n>N : \frac{f(n)}{g(n)} < k$);
    \item $f(n) = \omega(g(n))$ if $\lim_{n\to\infty} \frac{f(n)}{g(n)} = \infty$ ($\forall k>0\, \exists N>0\, \forall n>N : \frac{f(n)}{g(n)} > k$).
\end{itemize}%
Equivalently, we will use $f \ll g$ to denote that $f =o(g)$ and $f \gg g$ to denote that $f = \omega(g)$.
Moreover, whenever we write $x = f\pm g$ we mean that $|x-f| \le g$.
Note that the use of limits in the Bachmann--Landau asymptotic notation means that there exists a large number $N>0$ such that each notation holds for all $n>N$. 
In other words, our use of the notation holds for finite hypergraphs with a sufficiently large number of \nodes $n$.

\paragraph{Vector and operator norms}
We denote by $\mathbf{1}\in \mathbb{R}^n$ the vector of all ones.
For any vector $\mathbf{y}\in \mathbb{R}^n$, we denote its $\ell^\infty$-norm as $\|\mathbf{y}\|_\infty := \sup_{i\in \{1,\ldots, n\}} |y_i|$ and its $\ell^2$-norm as $\|\mathbf{y}\|_2:= (\sum_{j=1}^n |y_j|^2)^{\frac{1}{2}}$.
For any operator $\mathcal{A}: \mathbb{R}^n \mapsto \mathbb{R}^n$, we denote its $\ell^\infty$-norm as
$\|\mathcal{A}\|_\infty := \sup_{\mathbf{y} : \|\mathbf{y}\|_\infty=1} \|\mathcal{A}(\mathbf{y})\|_\infty$ and its $\ell^2$-norm as
$\|\mathcal{A}\|_2 := \sup_{\mathbf{y} : \|\mathbf{y}\|_2=1} \|\mathcal{A}(\mathbf{y})\|_2$.

\paragraph{Probability}
We say that an event $\mathcal{F}_n$, for $n\to\infty$, holds \textit{with high probability} if $\Pr(\mathcal{F}_n) = 1 - o(1)$.

\begin{definition}[$p$-Rademacher random vector]\label{def: p rademacher vector}
Let $p\in [0,1]$. We say that $\mathbf{x} \in \{-1,+1\}^n$ is a \emph{$p$\nobreakdash-Rademacher random vector} if each entry $x_i$ is such that $\Pr(x_i=+1)=p$ and $\Pr(x_i=-1)=1-p$, independently for every $i\in\{1,\ldots,n\}$. When we refer to $\mathbf{x}$ simply as a \emph{Rademacher random vector}, namely we omit the $p$, we mean that each entry $x_i$ is a Rademacher random variable, i.e., $p=1/2$.
\end{definition}

\begin{remark}[Norm of $\bstate{t}$]\label{rem: infty norm x t}
If the initial state of the dynamics $\bstate{0}$ is such that $\|\bstate{0}\|_\infty = 1$, e.g., if $\bstate{0}$ is a $p$-Rademacher random vector,
then $\bstate{t}$ satisfies $\|\bstate{t}\|_\infty \leq 1$ for every $t$, as a consequence of \cref{eq:dynamics clean}.
\end{remark}

\section{Linear interaction}\label{sec: lin dyn}

In this section, we provide an overview of some known facts about the linear counterpart of the nonlinear dynamics we consider in this paper.
When the interaction function $s$ in \cref{eq:dynamics clean} is linear and constant over time, the dynamics greatly simplifies and reduces to a linear weighted average (as proven in \cref{prop: linear interaction} and in \cite{PhysRevE.101.032310}). 
{This happens when $\lambda=0$ because of our assumptions on $s$ made in \cref{assumption: function s}, that imply $s(\lambda x)=s(0)=1$.}

\begin{proposition}[Linear interaction]\label{prop: linear interaction}
Let $\state{0}_i$ evolve according to \cref{eq:dynamics clean} with $\lambda= 0$.
It holds that
\[
    \state{t+1}_i = \frac{1}{D_{ii}}\sum_{j\in V} W_{ij}\,\state{t}_j,
\]
where $W=(W_{ij})$ is the adjacency matrix of the graph defined in \cref{eq:graph W} and $D=(D_{ii})$ is the diagonal degree matrix of the graph defined in \cref{eq:graph D}.
Equivalently, let $\bstate{0}$ the vector of \nodes states at time $t=0$. 
We have that
\[
    \bstate{t+1} = P\bstate{t} = P^{t+1}\bstate{0},
\]
where $P$ is the transition matrix defined in \cref{eq: matrix P}.
\end{proposition}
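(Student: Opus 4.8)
The plan is to reduce the nonlinear update in \cref{eq:dynamics clean} to the advertised linear form by exploiting the degeneracy of the interaction weights at $\lambda = 0$. First I would invoke \cref{assumption: function s}: since $s$ is analytic at $0$ with $a_0 = 1$, setting $\lambda = 0$ gives $s(\lambda|\state{t}_j - \state{t}_k|) = s(0) = 1$ for every pair, independently of the states. Substituting this into the normalization \cref{eq:normalization} collapses it to a purely structural quantity, $\snorm{t}_i = \sum_{\{j,k\} \in \nbr{i}} 1 = |\nbr{i}|$, which no longer depends on $t$; this time-independence is precisely what will make the resulting dynamics linear and time-homogeneous. The clean update \cref{eq:dynamics clean} then becomes the unweighted pair-average $\state{t+1}_i = |\nbr{i}|^{-1} \sum_{\{j,k\} \in \nbr{i}} (\state{t}_j + \state{t}_k)/2$.

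The combinatorial heart of the argument is rewriting the sum over unordered neighboring pairs as a sum against the motif-graph weights $W_{ij}$ from \cref{eq:graph W}. I would pass to ordered pairs via the adjacency tensor: because each unordered $\{j,k\} \in \nbr{i}$ corresponds to exactly the two ordered pairs $(j,k)$ and $(k,j)$ with $A_{ijk} = A_{ikj} = 1$, and there are no degenerate $j=k$ contributions (the hypergraph is simple, so $A_{ijj} = 0$), one obtains $\sum_{\{j,k\} \in \nbr{i}} (\state{t}_j + \state{t}_k)/2 = \tfrac{1}{2}\sum_{j,k} A_{ijk}(\state{t}_j + \state{t}_k)/2$. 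Using the symmetry $A_{ijk} = A_{ikj}$ to merge the two summands (relabeling $j \leftrightarrow k$ shows the $\state{t}_j$ and $\state{t}_k$ parts coincide) and then contracting the free index via $\sum_k A_{ijk} = W_{ij}$ turns the right-hand side into $\tfrac{1}{2}\sum_j W_{ij}\,\state{t}_j$. Combining this with $\snorm{t}_i = |\nbr{i}|$ and the identity $2|\nbr{i}| = D_{ii}$ recorded in \cref{eq:graph D} yields $\state{t+1}_i = D_{ii}^{-1} \sum_j W_{ij}\,\state{t}_j$, which is the scalar claim.

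Finally I would recognize $D_{ii}^{-1}\sum_j W_{ij}\,\state{t}_j$ as the $i$-th entry of $D^{-1}W\bstate{t} = P\bstate{t}$, with $P$ as in \cref{eq: matrix P}, giving $\bstate{t+1} = P\bstate{t}$; iterating from $t = 0$ then produces $\bstate{t+1} = P^{t+1}\bstate{0}$. I do not anticipate any genuine obstacle here, as the statement is essentially a direct computation. The only place requiring care is the bookkeeping of the factor $2$ in the ordered/unordered conversion and in the relation $D_{ii} = 2|\nbr{i}|$: these two factors of $2$ must be tracked consistently so that they cancel correctly against the normalization and leave exactly $D_{ii}^{-1}$ rather than $|\nbr{i}|^{-1}$ in front of the sum against $W$.
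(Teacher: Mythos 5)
Your proposal is correct and follows essentially the same route as the paper's proof: set $s \equiv 1$ at $\lambda = 0$ so the normalization collapses to $|\nbr{i}|$, convert the unordered-pair sum into an ordered double sum against the adjacency tensor, use the symmetry $A_{ijk}=A_{ikj}$ and the contraction $\sum_k A_{ijk}=W_{ij}$ together with $D_{ii}=2|\nbr{i}|$ to land on $D_{ii}^{-1}\sum_j W_{ij}\,\state{t}_j$, then iterate $\bstate{t+1}=P\bstate{t}$. The factor-of-2 bookkeeping you flag is exactly the point where the paper's derivation also absorbs the ordered/unordered double counting into $D_{ii}$, and you handle it identically.
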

\begin{proof}
As noted above, because $\lambda=0$, we have $s(\lambda|\state{t}_j-\state{t}_k|) = 1$ for all \nodes $j,k$ and at every time $t$.
Starting from \cref{eq:dynamics clean}, by using \cref{eq:graph D} and the previous observation we get:
\begin{align*}
    \state{t+1}_i &= \frac{1}{|\nbr{i}|} \sum_{ \{j,k\} \in \nbr{i}} \frac{(\state{t}_j+\state{t}_k)}{2}
    = \frac{1}{D_{ii}} \sum_{j \in V}\sum_{k \in V} A_{ijk} \cdot \frac{(\state{t}_j + \state{t}_k)}{2}
    \\
    &= \frac{1}{D_{ii}} \Bigg[ 
          \frac{1}{2} \sum_{ j \in V} \Big(\sum_{ k \in V} A_{ijk}\Big) \state{t}_j 
        + \frac{1}{2} \sum_{ k \in V} \Big(\sum_{ j \in V} A_{ikj}\Big) \state{t}_k 
    \Bigg]
    = \frac{1}{D_{ii}}\sum_{j\in V} W_{ij}\,\state{t}_j.
\end{align*}
Note that such formulation is equivalent, in matrix notation, to $\bstate{t+1}=P\bstate{t}$. By iterating for every $t$ we get that $\bstate{t+1}=P\bstate{t}=P(P\bstate{t-1})=\ldots=P^{t+1}\bstate{0}$.
\end{proof}

Note that the average described in the previous proposition is unweighted if and only if $G$ is regular, i.e., when $\Gamma$ is regular.

Moreover, in the linear case, the behavior of the dynamics is well-characterized: it converges to the (weighted) average of the initial states of the \nodes at a rate that is governed by the second largest eigenvalue $\nu$ of the transition matrix of $G$, as described in the following proposition. This behavior will be exploited in our analysis.

\begin{proposition}[Linear Dynamics {\cite[Section~2.1 and Lemma~4.5]{BECCHETTI202049}}]
\label{prop: linear dynamics convergence}
Let $\bstate{0} \in \mathbb{R}^n$. Consider the linear averaging dynamics
\(
    \bstate{t} = P^t\bstate{0},
\)
where $P$ is the transition matrix of a random walk on a (possibly edge-weighted) graph $G$.
Let $\nu$ and $\Delta$ as in \cref{eq: def nu} and \cref{eq: def delta}, respectively.
For every $t>0$, it holds that
\begin{equation}\label{eq: def mu}
    \bstate{t} = \bar\mu \mathbf{1} + \mathbf{r}^{(t)},
    \quad\text{with }
    \bar\mu := \frac{\sum_{i\in V}D_{ii}x_i^{(0)}}{\sum_{i\in V} D_{ii}},
    \quad\text{and }
    \| \mathbf{r}^{(t)} \|_\infty \leq \nu^t \sqrt{\Delta n},
\end{equation}
where $\bar\mu$ is the weighted average of the initial states and $\nu, \Delta$ are defined in \cref{eq: def nu,eq: def delta}.
\end{proposition}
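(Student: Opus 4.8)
The plan is to diagonalize the dynamics by exploiting the fact that, although $P = D^{-1}W$ is not symmetric, it is similar to the symmetric normalized adjacency matrix. Concretely, I would set $N := D^{-1/2}WD^{-1/2}$, which is symmetric since $W$ is, and observe that $P = D^{-1/2}ND^{1/2}$, hence $P^t = D^{-1/2}N^tD^{1/2}$ and $P,N$ share the same spectrum $1 = \lambda_1 \ge \lambda_2 \ge \cdots \ge \lambda_n \ge -1$. By the spectral theorem, $N$ admits an orthonormal eigenbasis $\mathbf{u}_1,\dots,\mathbf{u}_n$. The key computation is that $D^{1/2}\mathbf{1}$ is the top eigenvector: since $W\mathbf{1}$ has $i$-th entry $\sum_j W_{ij} = D_{ii}$, one checks $N(D^{1/2}\mathbf{1}) = D^{-1/2}W\mathbf{1} = D^{-1/2}D\mathbf{1} = D^{1/2}\mathbf{1}$, so $\mathbf{u}_1 = D^{1/2}\mathbf{1}/\|D^{1/2}\mathbf{1}\|_2$ with eigenvalue $\lambda_1 = 1$ and $\|D^{1/2}\mathbf{1}\|_2^2 = \sum_i D_{ii}$.

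Next I would expand the dynamics in this eigenbasis. Writing $\mathbf{w} := D^{1/2}\bstate{0} = \sum_{i=1}^n c_i\mathbf{u}_i$ with $c_i = \langle \mathbf{w},\mathbf{u}_i\rangle$, we get $\bstate{t} = D^{-1/2}N^t\mathbf{w} = D^{-1/2}\sum_i \lambda_i^t c_i\mathbf{u}_i$. The leading term ($i=1$) is time-independent because $\lambda_1 = 1$, and its coefficient is $c_1 = \langle D^{1/2}\bstate{0}, D^{1/2}\mathbf{1}\rangle/\|D^{1/2}\mathbf{1}\|_2 = \big(\sum_i D_{ii}\state{0}_i\big)/\sqrt{\sum_i D_{ii}}$; after undoing the conjugation, $D^{-1/2}(c_1\mathbf{u}_1) = \bar\mu\,\mathbf{1}$ with exactly $\bar\mu = \big(\sum_i D_{ii}\state{0}_i\big)/\big(\sum_i D_{ii}\big)$. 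This isolates the claimed limit, and the residual is $\mathbf{r}^{(t)} = D^{-1/2}\sum_{i\ge 2}\lambda_i^t c_i\mathbf{u}_i$.

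It remains to bound $\|\mathbf{r}^{(t)}\|_\infty$. I would chain $\|\mathbf{r}^{(t)}\|_\infty \le \|\mathbf{r}^{(t)}\|_2 \le d_{\min}^{-1/2}\big\|\sum_{i\ge 2}\lambda_i^t c_i\mathbf{u}_i\big\|_2$, using $\|D^{-1/2}\|_2 = d_{\min}^{-1/2}$. Orthonormality together with the definition $\nu = \max(|\lambda_2|,|\lambda_n|)$ gives $\big\|\sum_{i\ge 2}\lambda_i^t c_i\mathbf{u}_i\big\|_2^2 = \sum_{i\ge 2}\lambda_i^{2t}c_i^2 \le \nu^{2t}\sum_{i\ge 1}c_i^2 = \nu^{2t}\|\mathbf{w}\|_2^2$, and then $\|\mathbf{w}\|_2 = \|D^{1/2}\bstate{0}\|_2 \le d_{\max}^{1/2}\|\bstate{0}\|_2 \le d_{\max}^{1/2}\sqrt{n}$ under the normalization $\|\bstate{0}\|_\infty \le 1$ in force throughout the paper. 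Combining the three inequalities yields $\|\mathbf{r}^{(t)}\|_\infty \le \nu^t\sqrt{(d_{\max}/d_{\min})\,n} = \nu^t\sqrt{\Delta n}$, as claimed.

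The only genuinely delicate point is the first one: $P$ is not normal, so one cannot invoke the spectral theorem directly, and the similarity transform by $D^{\pm 1/2}$ is what makes the argument go through while keeping $\mathbf{1}$ as the stationary direction. This same transform is also the source of the $\sqrt{\Delta}$ factor, since conjugating back by $D^{-1/2}$ after projecting out the stationary component is not an isometry and costs exactly $\sqrt{d_{\max}/d_{\min}}$. Everything else is bookkeeping with orthonormal expansions and the elementary inequalities $\|\cdot\|_\infty \le \|\cdot\|_2$ and $\|\bstate{0}\|_2 \le \sqrt{n}\,\|\bstate{0}\|_\infty$.
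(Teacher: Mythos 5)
Your proof is correct. There is nothing in the paper to compare it against at the level of detail you provide: the paper imports this proposition wholesale from the cited reference (Section~2.1 and Lemma~4.5 of that work) and gives no internal proof, so you have in effect supplied the missing argument. What you wrote is the standard one, and it is the same argument used in the cited source: conjugate $P=D^{-1}W$ into the symmetric matrix $N=D^{-1/2}WD^{-1/2}$, expand in its orthonormal eigenbasis, identify $D^{1/2}\mathbf{1}$ as the eigenvector for the eigenvalue $1$ so that projecting onto it produces exactly $\bar\mu\,\mathbf{1}$, and pay the factor $\sqrt{d_{\max}/d_{\min}}=\sqrt{\Delta}$ for the non-isometric conjugation back by $D^{-1/2}$. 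All the individual inequalities in your chain ($\|\cdot\|_\infty\le\|\cdot\|_2$, Parseval, $|\lambda_i|\le\nu$ for $i\ge 2$, $\|D^{\pm 1/2}\|_2$ bounds) check out, including the disconnected/bipartite corner cases where $\nu=1$ and the bound is vacuous but still true. One point you were right to flag explicitly: as literally stated the proposition takes an arbitrary $\bstate{0}\in\mathbb{R}^n$, yet $\mathbf{r}^{(t)}$ is linear in $\bstate{0}$ while the bound $\nu^t\sqrt{\Delta n}$ does not depend on $\bstate{0}$ at all, so no such bound can hold without a normalization; your hypothesis $\|\bstate{0}\|_\infty\le 1$ is exactly the condition under which the proposition is invoked in the paper (it is applied to $\bstate{1}$, which satisfies $\|\bstate{1}\|_\infty\le 1$ by \cref{rem: infty norm x t}), so your proof establishes the statement in the precise form in which it is actually used.
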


\cref{prop: linear dynamics convergence} says that the linear dynamics on a graph $G$, starting from an initial configuration of the states $\bstate{0}$, converges to the weighted average of the initial states, in a time which is governed by $\nu$ and therefore by the topology of the graph, e.g., by considering Cheeger's inequality (see, e.g., \cite{brouwer2011spectra}).

\section{Nonlinear three-body interaction}\label{sec: nonlinear dyn}

In this section, we consider the nonlinear dynamics.
In particular, in \cref{sec:mean field} we consider the dynamics on a mean-field model that allows us to simply characterize the convergence of the dynamics.
In \cref{sec: general case}, instead, we analyze the actual dynamics, by proving it converges to the same value of the mean field model and by providing bounds on the convergence time.

\subsection{Mean-field model}\label{sec:mean field}
In this (sub)section, we investigate the dynamics on a \emph{trivial topology}, i.e., namely on a hypergraph such that $A_{ijk}=1$ for every $(i,j,k)$ including repetitions (e.g., $i=j=k$).
\begin{proposition}[Trivial topology]\label{prop: trivial topology}
Let $\Gamma=(V,H)$ be a hypergraph with trivial topology and $|V|=n$.
Assume that $\bstate{0} \in \{-1,+1\}^n$.
Let $a = |\{i\in V:\state{0}_{i}=+1\}|$ and let $b = n-a$.
Then, for all $t\geq 1$
\begin{equation}\label{eq: dyn trivial top}
    \bstate{t} = \frac{a-b}{n} \left(
        1 +\frac{2ab\,(1-{s(2\lambda)})}{n^{2}-2ab\,(1-{s(2\lambda)})}
    \right) \cdot \mathbf{1}.
\end{equation}
\end{proposition}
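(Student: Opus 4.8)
The plan is to exploit the complete symmetry of the trivial topology. Because $A_{ijk}=1$ for every triple (repetitions included), each neighbor sum $\sum_{\{j,k\}\in\nbr{i}}(\cdot)$ coincides, up to the global factor $\tfrac12$ coming from the unordered-to-ordered conversion used in the proof of \cref{prop: linear interaction}, with the full ordered double sum $\sum_{j,k\in V}(\cdot)$ over $V\times V$ — crucially including the diagonal $j=k$, since repetitions are allowed. Substituting this into \cref{eq:dynamics clean} and cancelling the prefactor $\tfrac12$ between numerator and denominator, the update becomes
\[
    \state{t+1}_i
    = \frac{\sum_{j,k\in V} s(\lambda|\state{t}_j-\state{t}_k|)\,\frac{\state{t}_j+\state{t}_k}{2}}{\sum_{j,k\in V} s(\lambda|\state{t}_j-\state{t}_k|)},
\]
whose right-hand side is literally the same expression for every \node $i$. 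Hence the dynamics reaches consensus in a single step, and it suffices to evaluate this common value at $t=1$ and then verify that it is stationary.

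For the value at $t=1$ I would use the binary structure of $\bstate{0}$. Since each state lies in $\{-1,+1\}$, any ordered pair $(j,k)$ has $|\state{0}_j-\state{0}_k|\in\{0,2\}$, so by \cref{assumption: function s} its weight is either $s(0)=1$ (equal states) or $s(2\lambda)$ (opposite states); correspondingly $\frac{\state{0}_j+\state{0}_k}{2}\in\{+1,-1,0\}$. Classifying the $n^2$ ordered pairs accordingly — $a^2$ with both states $+1$, $b^2$ with both $-1$, and $2ab$ mixed — the mixed pairs contribute $0$ to the numerator while each equal pair contributes its common sign with weight $1$. This yields numerator $a^2-b^2$ and normalization $a^2+b^2+2ab\,s(2\lambda)$, so that $\state{1}_i=(a^2-b^2)/(a^2+b^2+2ab\,s(2\lambda))$ for every $i$.

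The remaining work is routine algebra. Writing $n=a+b$ and substituting $n^2=a^2+2ab+b^2$, I would recast the denominator as $n^2-2ab\,(1-s(2\lambda))$ and the numerator as $(a-b)\,n$, and then factor to recover exactly the bracketed expression in \cref{eq: dyn trivial top}. Finally, for $t\ge 2$ I would observe that once all \nodes share the common value $\state{1}_i$, every pair has equal states, so all weights equal $s(0)=1$, and the resulting plain average of a constant vector returns that same constant; hence $\bstate{t}=\bstate{1}$ for all $t\ge 1$, completing the proof.

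I do not expect a genuine obstacle here, as the argument is a direct computation. The only points demanding care are the bookkeeping of ordered pairs — in particular keeping the diagonal $j=k$, which is precisely what distinguishes the stated formula from the distinct-pairs count and which is licensed by the ``repetitions included'' convention — and confirming that the denominator $a^2+b^2+2ab\,s(2\lambda)$ is nonzero so that the update is well defined; the latter holds whenever $s(2\lambda)>-1$, since $a^2+b^2\ge 2ab$.
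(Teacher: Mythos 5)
Your proof is correct and follows essentially the same route as the paper's: in the trivial topology every node performs the identical update, so consensus is reached in one step, and classifying the $n^2$ ordered pairs (diagonal included, which is exactly the convention the paper's own computation uses) gives numerator $a^2-b^2$ and normalization $a^2+b^2+2ab\,s(2\lambda)$, from which the stated formula follows by the same algebra. The only differences are cosmetic: you make explicit the stationarity check for $t\ge 2$ and the nonvanishing of the denominator, both of which the paper leaves implicit.
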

\begin{remark}
Note that the coefficient $(a-b)/n$ appearing in \cref{eq: dyn trivial top} corresponds to the weighted average of the initial states $\bar\mu$, defined in \cref{eq: def mu}. Indeed under the assumption of \cref{prop: trivial topology}, the average simplifies to $\bar\mu = \frac{1}{n} \sum_{i \in V} \state{0}_i = \frac{a-b}{n}$.
\end{remark}
\begin{remark}
Note that the nonlinearity of the update rule made the system converge to a value that is multiplicatively shifted with respect to the average of the initial states $\bar\mu = (a-b)/n$. 
Moreover, we note that \cref{eq: dyn trivial top} is in agreement with the linear case when $\lambda = 0$.
\end{remark}
\begin{proof}[Proof of \cref{prop: trivial topology}]
We show that the dynamics converges in one single step since all \nodes perform the exact same update, namely $\state{1}_{i} = \state{1}_{j}$ for all $i,j \in V$. 
Note that, given the initialization $\state{0}_{i} \in \{-1,+1\}$ for all \nodes $i\in V$, we have that: 
\begin{enumerate}
    \item[(i)] if $\state{0}_{j}=\state{0}_{k}$, then $(\state{0}_{j}+\state{0}_{k})/2 = \state{0}_{j}$ and ${s(\lambda|\state{0}_j-\state{0}_k|) = 1}$;
    \item[(ii)] if $\state{0}_{j}\neq\state{0}_{k}$, then $(\state{0}_{j}+\state{0}_{k})/2 = 0$ and ${s(\lambda|\state{0}_j-\state{0}_k|) = s(2\lambda)}$.
\end{enumerate}
Therefore
\begin{align*}
    \state{1}_{i} 
    &= \frac{1}{\snorm{0}_i} \Bigg(
        \sum_{\substack{j,k\in V\\\state{0}_j=\state{0}_k}} s(\lambda|\state{0}_j-\state{0}_k|) \, \frac{\state{0}_{j}+\state{0}_{k}}{2}
        +\sum_{\substack{j,k\in V\\\state{0}_j\neq\state{0}_k}}  s(\lambda|\state{0}_j-\state{0}_k|) \, \frac{\state{0}_{j}+\state{0}_{k}}{2}
    \Bigg)
    \\
    &=\frac{1}{\snorm{0}_i} \sum_{\substack{j,k\in V\\\state{0}_j=\state{0}_k}} \state{0}_j .
\end{align*}
Using the equivalence $a+b=n$, we have that
\begin{align*}
    \state{1}_{i} 
    &=\frac{a^{2}-b^{2}}{a^{2}+b^{2}+2ab\cdot{s(2\lambda)}}
    =\frac{(a+b)(a-b)}{(a+b)^2-2ab(1-{s(2\lambda)})}
    =\frac{a-b}{n-\frac{2ab}{n}(1-{s(2\lambda)})}
    \\
    &=\frac{a-b}{n} \left(
        1 +\frac{2ab(1-{s(2\lambda)})}{n^{2}-2ab(1-{s(2\lambda)})}
    \right).
\end{align*}
\end{proof}

\subsection{General case}\label{sec: general case}
We now take into account the general case. 
In particular, we first rewrite the nonlinear dynamics in an equivalent formulation which is helpful in comparing the nonlinear evolution with that of the corresponding linear dynamics (see \cref{sec: dyn operator notation}). 
Later, we specify the assumptions we need on the hypergraph to prove \cref{theorem: main} and we state our main result (see \cref{sec: main result}). 
Before starting the discussion, we underline that, for such a comparison, it would be enough to approximate the nonlinear interaction via Taylor expansion to the first order. However, in order to keep track of the dependency on $s$ and $\lambda$ in our final approximation of the nonlinear interaction, we do Taylor expansion up to the second order (see \cref{pro: 2nd order taylor}).

\begin{proposition}\label{pro: 2nd order taylor}
Let $\lambda\in (-1/2,1/2)$.
For any $i\in V$, let $\state{0}_i$ evolve according to \cref{eq:dynamics clean}, i.e., 
\[
   x_i^{(t+1)} =  \frac{1}{\snorm{t}_i} \sum_{ \{j,k\} \in \nbr{i}} {s(\lambda|\state{t}_j-\state{t}_k|)} \cdot \frac{\state{t}_j+\state{t}_k}{2}.
\]
For every time $t$, it holds that
\begin{equation*}
    \state{t+1}_i = 
    \frac{1}{|\nbr{i}|} \sum_{\{j,k\} \in \nbr{i}} \!\! \left(
        1 
        + \frac{\lambda}{|\nbr{i}|} \sum_{\{\ell,m\} \in \nbr{i}} \!\!\left(
            |\state{t}_j-\state{t}_k| - |\state{t}_\ell-\state{t}_m| 
        \right)
        + \mathcal{E}_{\lambda;i,j,k}^{(t)}
    \right) \frac{\state{t}_j+\state{t}_k}{2},
\end{equation*}
with 
\begin{equation*}
    \mathcal{E}_{\lambda;i,j,k}^{(t)} = O\left(\lambda^2\max_{\{j,k\} \in \nbr{i}}|\state{t}_j-\state{t}_k|^2\right).
\end{equation*}
\end{proposition}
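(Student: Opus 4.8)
The plan is to Taylor-expand the interaction function $s$ to second order and propagate this expansion through both the numerator and the normalization $\snorm{t}_i$ in \cref{eq:dynamics clean}, retaining the terms linear in $\lambda$ and absorbing everything else into $\mathcal{E}^{(t)}_{\lambda;i,j,k}$. The key preliminary observation is that, by \cref{rem: infty norm x t}, $\|\bstate{t}\|_\infty\le 1$, so every gap satisfies $|\state{t}_j-\state{t}_k|\le 2$; since $|\lambda|<1/2$, each argument $\lambda|\state{t}_j-\state{t}_k|$ of $s$ lies in a fixed bounded interval around $0$. On this interval $s''$ is bounded by analyticity, so Taylor's theorem together with $a_0=a_1=1$ from \cref{assumption: function s} yields
\[
    s(\lambda|\state{t}_j-\state{t}_k|)=1+\lambda|\state{t}_j-\state{t}_k|+O\big(\lambda^2|\state{t}_j-\state{t}_k|^2\big),
\]
with an implied constant uniform in $i,j,k,t$. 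Throughout I write $\mathcal{R}$ for a generic quantity of order $O\big(\lambda^2\max_{\{j,k\}\in\nbr{i}}|\state{t}_j-\state{t}_k|^2\big)$.

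First I would expand the denominator. Summing the display above over $\nbr{i}$ gives $\snorm{t}_i=|\nbr{i}|+\lambda\sum_{\{\ell,m\}\in\nbr{i}}|\state{t}_\ell-\state{t}_m|+|\nbr{i}|\,\mathcal{R}$. Factoring out $|\nbr{i}|$ (legitimate since $|\nbr{i}|\ge 1$) and expanding the reciprocal as a geometric series, valid because the correction is small for $\lambda$ in the allowed range, gives
\[
    \frac{1}{\snorm{t}_i}=\frac{1}{|\nbr{i}|}\left(1-\frac{\lambda}{|\nbr{i}|}\sum_{\{\ell,m\}\in\nbr{i}}|\state{t}_\ell-\state{t}_m|+\mathcal{R}\right).
\]

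Second, substituting this together with the numerator expansion into \cref{eq:dynamics clean}, the product of the two scalar factors multiplying $(\state{t}_j+\state{t}_k)/2$ becomes
\[
    \Big(1+\lambda|\state{t}_j-\state{t}_k|+\mathcal{R}\Big)\Big(1-\tfrac{\lambda}{|\nbr{i}|}\sum_{\{\ell,m\}\in\nbr{i}}|\state{t}_\ell-\state{t}_m|+\mathcal{R}\Big)=1+\lambda\Big(|\state{t}_j-\state{t}_k|-\tfrac{1}{|\nbr{i}|}\sum_{\{\ell,m\}\in\nbr{i}}|\state{t}_\ell-\state{t}_m|\Big)+\mathcal{R},
\]
where the cross term $-\lambda^2|\state{t}_j-\state{t}_k|\cdot\tfrac{1}{|\nbr{i}|}\sum_{\{\ell,m\}}|\state{t}_\ell-\state{t}_m|$ of the two linear parts is itself of order $\mathcal{R}$ and is absorbed. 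Rewriting the linear coefficient as $\frac{1}{|\nbr{i}|}\sum_{\{\ell,m\}\in\nbr{i}}\big(|\state{t}_j-\state{t}_k|-|\state{t}_\ell-\state{t}_m|\big)$ and identifying $\mathcal{E}^{(t)}_{\lambda;i,j,k}$ with the collected second-order remainder produces exactly the claimed identity.

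The algebra is routine; the only genuinely delicate point is ensuring all the $O(\cdot)$ constants are uniform in $i$, $t$, and the pair $\{j,k\}$, so that they may be consolidated into the single error term $\mathcal{E}^{(t)}_{\lambda;i,j,k}$. This is precisely what the a priori bound $\|\bstate{t}\|_\infty\le 1$ secures: it confines every argument of $s$ to a compact neighborhood of $0$ on which $s$, $s'$, $s''$ are all bounded, providing one constant that controls every remainder simultaneously, and it guarantees convergence of the geometric-series expansion of $1/\snorm{t}_i$ for $\lambda\in(-1/2,1/2)$.
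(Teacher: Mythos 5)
Your proposal is correct and follows essentially the same route as the paper's own proof: expand $s$ to second order using \cref{assumption: function s}, expand the reciprocal of the normalization $\snorm{t}_i$, multiply the two factors, and absorb all quadratic and higher terms (including the cross term) into $\mathcal{E}_{\lambda;i,j,k}^{(t)}$, with uniformity secured by $\|\bstate{t}\|_\infty \le 1$. The only cosmetic difference is that the paper tracks the absorbed remainders explicitly as separate error terms of orders $\lambda^2,\lambda^3,\lambda^4$, whereas you collect them into a single generic remainder $\mathcal{R}$, and you are somewhat more explicit about justifying the geometric-series inversion of $\snorm{t}_i$.
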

\begin{remark}[Restriction on the range of $\lambda$]\label{remark: range of lambda}
    For the sake of simplicity, we restrict $\lambda \in (-1/2, 1/2)$. 
    Such a restriction is enough to guarantee that $\lambda|x_j^{(t)} - x_k^{(t)}| < 1$ since $|x_j^{(t)} - x_k^{(t)}|\leq 2$ (see \cref{rem: infty norm x t}), which is important to get a meaningful Taylor expansion when approximating the nonlinear dynamics. However, our analysis also holds for larger values of $|\lambda|$, with high probability, since we only use Taylor expansion for $t>1$, when $\|\bstate{t}\|_\infty$ is already small.
\end{remark}
\begin{proof}[Proof of \cref{pro: 2nd order taylor}]
We write an expansion for the function $s$ appearing in \cref{eq:dynamics clean}.
{Using \cref{assumption: function s}}, for any $i\in V$, and for all the pairs of \nodes $\{j,k\}\in N_i$, we write
\begin{equation}\label{eq: approx exp}
    {s({\lambda|\state{t}_j - \state{t}_k|})} = 1 + \lambda |\state{t}_j - \state{t}_k| + O(\lambda^2|\state{t}_j - \state{t}_k|^2). 
\end{equation}
From \cref{eq:normalization,eq: approx exp} it follows that 
\begin{align}\label{eq: approx 1/z}
    \frac{1}{\snorm{t}_i} = \frac{1}{|N_i|} \Bigg(1 - \frac{1}{|N_i|}\sum_{\{j,k\}\in N_i} \left( 
        \lambda |\state{t}_j - \state{t}_k| + O(\lambda^2|\state{t}_j - \state{t}_k|^2)
    \right)\Bigg).
\end{align}
We then put the two expansions \cref{eq: approx exp} and \cref{eq: approx 1/z} in \cref{eq:dynamics clean} and we get 
\begin{equation*}
    \state{t+1}_i = 
    \frac{1}{|\nbr{i}|} \sum_{\{j,k\} \in \nbr{i}} \!\! \left(
        1 
        + \frac{\lambda}{|\nbr{i}|} \sum_{\{\ell,m\} \in \nbr{i}} \!\!\left(
            |\state{t}_j-\state{t}_k| - |\state{t}_\ell-\state{t}_m| 
        \right)
        + \mathcal{E}_{\lambda;i,j,k}^{(t)}
    \right) \frac{\state{t}_j+\state{t}_k}{2}
\end{equation*}

where $\mathcal{E}_{\lambda;i,j,k}^{(t)} := \sum_{\sharp=2}^{4} \mathcal{E}_{\lambda;i,j,k;\sharp}^{(t)}$ is the sum of the following smaller order error terms (where the index $\sharp$ denotes the order of the error).
In particular:
\begin{itemize}[leftmargin=*]
\item \(\displaystyle
    \mathcal{E}_{\lambda;i,j,k;2}^{(t)} := \frac{\lambda^2}{|N_i|}\!\!\sum_{\{\ell,m\}\in N_i} \!\!\left(
        -( |\state{t}_j \!-\! \state{t}_k| \cdot |\state{t}_\ell \!-\! \state{t}_m| )
        \!+\! O(|\state{t}_j \!-\! \state{t}_k|^2)
        \!+\! O(|\state{t}_\ell \!-\! \state{t}_m|^2)
    \right);
\)
\item \(\displaystyle
    \mathcal{E}_{\lambda;i,j,k;3}^{(t)} := \frac{\lambda^3}{|N_i|} \!\!\sum_{\{\ell,m\}\in N_i} \!\!\left(
        |\state{t}_j - \state{t}_k| 
        \cdot O(|\state{t}_\ell -\state{t}_m|^2)
        + |\state{t}_\ell - \state{t}_m|
        \cdot O(|\state{t}_j - \state{t}_k|^2)
    \right);
\)
\item \(\displaystyle
    \mathcal{E}_{\lambda;i,j,k;4}^{(t)} := \frac{\lambda^4}{|N_i|}\!\!\sum_{\{\ell,m\}\in N_i} \!\!\left( 
        O(|\state{t}_j - \state{t}_k|^2)
        \cdot O(|\state{t}_\ell -\state{t}_m|^2) 
    \right).
\)
\end{itemize}
Moreover, we have
\[
    \mathcal{E}_{\lambda;i,j,k}^{(t)} = O\left(\lambda^2\max_{\{j,k\} \in \nbr{i}}|\state{t}_j-\state{t}_k|^2\right),
\]
as a direct consequence of the definition of $N_i$ (see \cref{eq: def Ni}) and of the fact that $\|\bstate{t}\|_\infty \leq 1$ for all times $t$, as discussed in \cref{rem: infty norm x t}.
\end{proof}

\subsubsection{Dynamics in operator notation}\label{sec: dyn operator notation}
In our main result, we want to approximate the nonlinear dynamics \cref{eq:dynamics clean} with the corresponding linear dynamics discussed in \cref{sec: lin dyn}, while keeping track of the leading nonlinear contribution and on its dependence on $\lambda$. 
In particular, our goal is to use \cref{pro: 2nd order taylor} to write the dynamics as a perturbation of the linear dynamics, as already taken into account in \cref{sec: lin dyn}. 
For this purpose, for any fixed $\lambda\in (-1/2,1/2)$, we define two operators $\mathcal{P}: \mathbb{R}^n\rightarrow \mathbb{R}^n$ and $\mathcal{Q}_\lambda: \mathbb{R}^n\rightarrow \mathbb{R}^n$ such that $\mathcal{P}+\lambda \mathcal{Q}_\lambda$ is the operator for our update:
\begin{equation}\label{eq:nonlinear dynamics matrix apx}
    \bstate{t+1} = (\mathcal{P} + \lambda \mathcal{Q}_\lambda)(\bstate{t}).
\end{equation}
We define the two operators starting from \cref{pro: 2nd order taylor}:
$\mathcal{P}$ models the linear effect of the first order approximation in the expansion, while $\mathcal{Q}_\lambda$ models the nonlinear effect of the second and smaller orders in the expansion, and as a consequence depends on $\lambda$. 
In particular, we define $\mathcal{P}$ as a linear operator that acts on $\bstate{t}$ as the matrix $P$ (recall \cref{eq:graph W,eq:graph D,eq: matrix P}), i.e.,
\begin{equation}\label{eq: op P acts as a matrix}
    \mathcal{P}(\bstate{t})=D^{-1}W\bstate{t}=P\bstate{t}.
\end{equation}
We define $\mathcal{Q}_\lambda$ as a nonlinear operator that acts on $\bstate{t}$ so that the $i$-th entry of vector $\mathcal{Q}_\lambda(\bstate{t})$ is
\begin{equation}\label{eq:matrix Qt}
[\mathcal{Q}_\lambda(\bstate{t})]_{i} := 
\frac{1}{\lambda} \cdot \frac{1}{D_{ii}} \sum_{j\in V} \!\left[
    \sum_{k\in V} A_{ijk}\!\!\left(
        \frac{\lambda}{|\nbr{i}|} \!\sum_{\{\ell,m\}\in\nbr{i}}\!\!\!\left(
            |\state{t}_j\!-\!\state{t}_k| \!-\! |\state{t}_\ell\!-\!\state{t}_m|
    \right) + \mathcal{E}_{\lambda;i,j,k}^{(t)} \right)
\!\right] \cdot \state{t}_j
\end{equation}
where $1/\lambda$ is factored out for convenience, so that $\lambda$ appears in \cref{eq:nonlinear dynamics matrix apx},
and where we wrote the sum over pairs of neighbors of a \node $i$ as a sum over single \nodes (as seen in \cref{prop: linear interaction}).

By iterating \cref{eq:nonlinear dynamics matrix apx}
and using $\circ$ to denote the composition of operators, we get
\begin{equation}\label{eq:xt vector form}
    \bstate{t+1} =\underset{\text{$t$ times}}{\underbrace{(\mathcal{P}+\lambda \mathcal{Q}_\lambda)\,\circ\,\cdots\,\circ\,(\mathcal{P}+\lambda \mathcal{Q}_\lambda)}}\,(\bstate{1})
    = (\mathcal{P}+\lambda \mathcal{Q}_\lambda)^{t} (\bstate{1}).
\end{equation}
Note that we do not iterate until $\bstate{0}$, but instead, we stop at $\bstate{1}$. 
Later, by \cref{lemma: value of xi(1)}, we fully characterize $\bstate{1}$ keeping track of the leading nonlinear contribution to the dynamics.

In conclusion, defining
\begin{equation}\label{eq:matrix Rt}
    \mathcal{R}^{(t)}_\lambda := (\mathcal{P}+\lambda \mathcal{Q}_\lambda)^t-\mathcal{P}^t,
\end{equation}
by plugging \cref{eq:matrix Rt} into \cref{eq:xt vector form}, and by using \cref{eq: op P acts as a matrix}, we rewrite the dynamics as follows:
\begin{equation}\label{eq: dynamics final form}
    \bstate{t+1} = \mathcal{P}^t(\bstate{1}) + \mathcal{R}^{(t)}_\lambda (\bstate{1}) = P^t\bstate{1} + \mathcal{R}^{(t)}_\lambda (\bstate{1}).
\end{equation}

Note that, after the first update, the nonlinear effect is negligible: the dynamics is mainly governed by $P^t\bstate{1}$ and the remaining part $\mathcal{R}^{(t)}_\lambda (\bstate{1})$ can be considered as a small residue term.
The proof that  $\mathcal{R}^{(t)}_\lambda(\bstate{1})$ is negligible will be given in \cref{sec: Rt}.

\subsubsection{Assumptions and main result}\label{sec: main result}
In our analysis, in \cref{prop: bounding x1}, we will essentially prove that after one step of the dynamics, $\bstate{1}$ is already close to the limiting value of $\bstate{t}$, with high probability, but did not converge yet.
To this extent, we introduce the parameter $\varepsilon$:
\begin{equation}\label{def: epsilon delta}
    \varepsilon = \varepsilon_n := 
    C \sqrt{\log n} \cdot \max_{i \in V} \sqrt{ \sum_{j\in V} \left(\frac{ W_{ij}}{D_{ii}}\right)^2 },
\end{equation}
where $C$ is such that the bound holds with probability $1-2n^{-\frac{C^2}{18}+1}$ and, hence, with high probability for $C>\sqrt{18}$.
In the next lemma, we prove a lower bound on $\varepsilon$ which will be useful in the proof of the main theorem.

\begin{lemma}\label{lemma: lw bound epsilon}
It holds that
\[
    \varepsilon \ge C \sqrt{\frac{\log n}{n}}.
\]
\end{lemma}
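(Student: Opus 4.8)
The plan is to reduce the claim to a single application of the Cauchy--Schwarz inequality, exploiting the fact that for each fixed \node $i$ the quantities $W_{ij}/D_{ii}$ form a probability distribution over $j$. Concretely, recall from \cref{eq:graph D} that $D_{ii}=\sum_{j\in V}W_{ij}$, so that the $i$-th row of the transition matrix $P=D^{-1}W$ sums to one, namely $\sum_{j\in V} W_{ij}/D_{ii}=1$. This normalization is the only structural input needed.

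First I would fix an arbitrary \node $i\in V$ and write $p_j:=W_{ij}/D_{ii}\ge 0$, so that $\sum_{j\in V}p_j=1$. Applying Cauchy--Schwarz to the vectors $(p_j)_{j\in V}$ and $\mathbf{1}$ gives
\[
    1=\Big(\sum_{j\in V}p_j\Big)^2\le n\sum_{j\in V}p_j^2,
\]
whence $\sum_{j\in V}(W_{ij}/D_{ii})^2\ge 1/n$ and therefore $\sqrt{\sum_{j\in V}(W_{ij}/D_{ii})^2}\ge 1/\sqrt{n}$.

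Since this lower bound holds for every \node $i$, it holds in particular for the maximizing one, so that $\max_{i\in V}\sqrt{\sum_{j\in V}(W_{ij}/D_{ii})^2}\ge 1/\sqrt{n}$. Substituting this into the definition of $\varepsilon$ in \cref{def: epsilon delta} yields
\[
    \varepsilon=C\sqrt{\log n}\cdot\max_{i\in V}\sqrt{\sum_{j\in V}\Big(\frac{W_{ij}}{D_{ii}}\Big)^2}\ge C\sqrt{\log n}\cdot\frac{1}{\sqrt{n}}=C\sqrt{\frac{\log n}{n}},
\]
which is exactly the desired inequality.

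There is essentially no hard step here; the only points to be careful about are the orientation of the Cauchy--Schwarz inequality (it produces a \emph{lower} bound on the sum of squares precisely because the row sum is normalized to one) and the observation that taking a maximum can only increase a quantity that already dominates $1/\sqrt{n}$ termwise. As a sanity check, the bound is tight: it is attained exactly when every row of $P$ is uniform, i.e.\ when the motif graph is regular, and any deviation from regularity makes $\varepsilon$ strictly larger.
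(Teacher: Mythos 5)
Your proof is correct and is essentially identical to the paper's own argument: both fix a \node $i$, apply Cauchy--Schwarz to the normalized row $(W_{ij}/D_{ii})_j$ of $P$ to get $\sum_{j\in V}(W_{ij}/D_{ii})^2 \ge 1/n$, and substitute into the definition of $\varepsilon$ in \cref{def: epsilon delta}. (Your closing tightness remark is slightly imprecise --- since $W_{ii}=0$ the rows of $P$ can never be exactly uniform, so equality is only attained asymptotically for regular motif graphs --- but this is a side comment that does not affect the proof.)
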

\begin{proof}
Recall that for all \nodes $i\in V$
\(
    \sum_{j\in V} \frac{W_{ij}}{D_{ii}} = 1
\),
and for all $i,j\in V$
\(
    \frac{W_{ij}}{D_{ii}} \geq 0.
\)
Then, for every fixed $i\in V$, by Cauchy-Schwarz inequality we get
\begin{equation*}
    1 = \left(\sum_{j\in V} \frac{W_{ij}}{D_{ii}} \right)^{2}
    =\left(\sum_{j\in V} 1 \cdot \frac{W_{ij}}{D_{ii}} \right)^{2}
    \leq\left(\sum_{j\in V} 1^{2}\right) \cdot \left(\sum_{j=1}^{n} \left(\frac{W_{ij}}{D_{ii}}\right)^{2}\right)
    = n\sum_{j\in V} \left(\frac{W_{ij}}{D_{ii}}\right)^{2}
    \implies
    \sum_{j\in V} \left(\frac{W_{ij}}{D_{ii}}\right)^{2} \geq \frac{1}{n}.
\end{equation*}
We conclude the proof by plugging the above equation into \cref{def: epsilon delta}.
\end{proof}

Recall that we suppose the initial state of our hypergraph to be a $p$-Rademacher random vector for some $p\in [0,1]$ (see \cref{def: p rademacher vector}). Below we collect the assumptions we need to prove our main result. Note that in the case of an initial state being a Rademacher random vector, i.e., $p=1/2$, we need additional assumptions to prove \cref{theorem: main}.

\begin{assumption}[General assumptions on hypergraph]\label{assumptions on Gamma}
Let $\Gamma = (V, H)$ be a hypergraph with $|V|= n$ and let $G(\Gamma)$ be its associated motif graph.
Recall the definitions of $\nbr{i},\nu,\Delta$ respectively given in \cref{eq: def Ni,eq: def nu,eq: def delta}.
We assume that $\Gamma$ is such that:
\begin{enumerate}
    \item the set $N_i$ satisfies $|\nbr{i}| = \omega(\log n),\, \forall i \in V$;
    \item the motif graph $G(\Gamma)$ is connected and non-bipartite, i.e., $\nu<1$;
    \item there exists $m\in \mathbb{N}$ such that 
    \[
        \frac{\nu^m}{1-\nu} \sqrt{\Delta n} \leq m \varepsilon= o(1).
    \]
\end{enumerate}
\end{assumption}

\begin{assumption}[Extra assumptions on hypergraph for $p= 1/2$]\label{extra assumptions on Gamma}
Under the same hypotheses as in \cref{assumptions on Gamma}, if $\bstate{0}$ is a Rademacher random vector, we assume that the motif graph $G(\Gamma)$ is such that:
\begin{enumerate}
    \item there exist $m\in \mathbb{N}$ and $\delta=\delta_n \in (0,1)$ such that 
    \begin{equation*}
        m\varepsilon^2\ll \delta < \varepsilon,
        \quad\text{and }
        \displaystyle \delta = o\left( \frac{\sqrt{\sum_{i\in V} D_{ii}^2}}{\sum_{i\in V} D_{ii}} \right);
    \end{equation*}
    \item it holds \[
        \displaystyle\left( \sum_{i \in V} D_{ii}^2 \right)^{-3/2} \cdot \sum_{i \in V} D_{ii}^3 = o(1).
    \]
\end{enumerate}
\end{assumption}

Note that there exists at least a family of graphs satisfying the above assumptions, as described in the following proposition (whose proof is deferred to \cref{sec: erdos renyi hypergraph}).
\begin{remark}
There exist hypergraphs that satisfy \cref{assumptions on Gamma} and \cref{extra assumptions on Gamma}. 
In particular, as we prove in \cref{prop: erh}, Erd\H{o}s-R\'enyi hypergraphs satisfy the assumptions, with high probability.
\end{remark}

\begin{remark}
When $p=1/2$, we make extra assumptions on the hypergraph since the analysis is harder and requires an additional argument. 
In particular, the simpler argument we used to lower bound the leading contribution of the dynamics in the case $p\neq1/2$ does not hold when $p=1/2$. The technical details will be made clear in the proof of \cref{theorem: main}.
\end{remark}

\begin{theorem}[Convergence]\label{theorem: main}
Let $\lambda \in (-1/2,1/2)$ and $p \in [0,1]$. 
Let $\Gamma=(V,H)$ be a 3-uniform hypergraph with $|V|=n$ such that \cref{assumptions on Gamma} is satisfied. In addition, if $p=1/2$, we also assume $\Gamma$ to satisfy \cref{extra assumptions on Gamma}. Let us consider the dynamics in \cref{eq:dynamics clean} with $\bstate{0}$ being a $p$-Rademacher random vector (see \cref{def: p rademacher vector}). With high probability, for all times $t\ge T$ with
\(
    T = O\left( \frac{\log n}{\log(1/\nu)} \right),
\)
one has
\[
    \bstate{t} = \bar\mu \left(1 + \frac{2p(1-p)(1-{s(2\lambda)})}{1-2p(1-p)(1-{s(2\lambda)})} \pm o(1) \right) \cdot \mathbf{1},
\]
where $\nu,\bar\mu$ are defined as in \cref{eq: def nu,eq: def mu}, respectively.
\end{theorem}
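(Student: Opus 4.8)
The plan is to exploit the decomposition $\bstate{t+1} = P^t\bstate{1} + \mathcal{R}^{(t)}_\lambda(\bstate{1})$ from \cref{eq: dynamics final form}, treating the linear term and the nonlinear residual separately, after first pinning down $\bstate{1}$. The guiding principle, already visible in \cref{prop: trivial topology}, is that the multiplicative shift is produced entirely by the first update: for binary initial data a neighboring pair $\{j,k\}\in\nbr{i}$ contributes weight $1$ and mean $\state{0}_j$ when $\state{0}_j=\state{0}_k$, and weight $s(2\lambda)$ and mean $0$ when $\state{0}_j\neq\state{0}_k$. Writing $x_i^{(1)}$ in these terms and using that the $p$-Rademacher signs are independent, I would apply a bounded-differences (McDiarmid) concentration argument over the node signs, together with the degree assumption $|\nbr{i}|=\omega(\log n)$ and a union bound over the $n$ neighborhoods, to show that the fraction of disagreeing pairs in each neighborhood concentrates around $2p(1-p)$, uniformly over all $i\in V$ with high probability. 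This is exactly what \cref{lemma: value of xi(1)} and \cref{prop: bounding x1} are set up to deliver: $\bstate{1} = \bar\mu\,(1+\sigma)\,\mathbf{1} + \text{(fluctuation)}$ with the fluctuation controlled at scale $\varepsilon$ (recall \cref{def: epsilon delta}), where I abbreviate the target shift by $\sigma := \frac{2p(1-p)(1-s(2\lambda))}{1-2p(1-p)(1-s(2\lambda))}$.

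For the linear term I would invoke \cref{prop: linear dynamics convergence} applied to the vector $\bstate{1}$: it yields $P^t\bstate{1} = \bar\mu^{(1)}\mathbf{1} + \mathbf{r}^{(t)}$ with $\|\mathbf{r}^{(t)}\|_\infty \le \nu^t\sqrt{\Delta n}$, where $\bar\mu^{(1)}$ is the $D$-weighted average of $\bstate{1}$. Choosing the constant hidden in $T=O(\log n/\log(1/\nu))$ so that $\nu^T\sqrt{\Delta n}=o(1)$ kills this residual for all $t\ge T$. It then remains to verify that the weighted average of $\bstate{1}$ satisfies $\bar\mu^{(1)} = \bar\mu(1+\sigma\pm o(1))$.

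Establishing $\bar\mu^{(1)}=\bar\mu(1+\sigma\pm o(1))$ is where the randomness reenters and where the case $p=1/2$ diverges from the rest. When $p\neq1/2$ the weighted average $\bar\mu$ concentrates around $2p-1\neq0$, so the entrywise fluctuations of $\bstate{1}$ are of smaller relative order and are absorbed into the $o(1)$ directly. When $p=1/2$, however, $\bar\mu$ is itself random of order $n^{-1/2}$, so the fluctuations cannot simply be divided out; here I would use \cref{extra assumptions on Gamma} — the sandwich $m\varepsilon^2\ll\delta<\varepsilon$ and the moment condition on the degree sequence — to compare the numerator and denominator of $\bar\mu^{(1)}$ and show their relative error is still $o(1)$. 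This is the delicate probabilistic estimate of the argument, and it is the reason the extra assumptions are needed only at $p=1/2$.

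Finally I would bound the nonlinear residual $\mathcal{R}^{(t)}_\lambda(\bstate{1})$ defined in \cref{eq:matrix Rt}. Expanding $(\mathcal{P}+\lambda\mathcal{Q}_\lambda)^t-\mathcal{P}^t$ telescopically produces a sum of terms, each carrying at least one factor $\lambda\mathcal{Q}_\lambda$ between powers of the two operators; the key structural point is that $\mathcal{Q}_\lambda(\mathbf{y})$ is driven by the pairwise gaps $|y_j-y_k|$ (see \cref{eq:matrix Qt} and \cref{pro: 2nd order taylor}), which shrink as the state nears consensus, while $\mathcal{P}$ contracts the non-constant component at rate $\nu$. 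Combining the geometric contraction $\nu^m$ with the fluctuation scale $\varepsilon$ and summing the resulting near-geometric series should yield $\|\mathcal{R}^{(t)}_\lambda(\bstate{1})\|_\infty=o(1)$ precisely under item~3 of \cref{assumptions on Gamma}, namely $\frac{\nu^m}{1-\nu}\sqrt{\Delta n}\le m\varepsilon=o(1)$. I expect this to be the main obstacle: one must track the interplay between the contraction of $\mathcal{P}$ and the nonlinearity $\mathcal{Q}_\lambda$ across all $t$ steps so that the accumulated residual stays $o(1)$ rather than compounding over time, and the third assumption appears to be engineered to close exactly this estimate.
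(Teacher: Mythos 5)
Your skeleton is the same as the paper's: the decomposition $\bstate{t+1}=P^t\bstate{1}+\mathcal{R}^{(t)}_\lambda(\bstate{1})$ of \cref{eq: dynamics final form}, a uniform concentration statement for $\bstate{1}$ (\cref{lemma: value of xi(1)}, \cref{prop: bounding x1}), the linear residual killed by the spectral gap (\cref{prop: linear dynamics convergence}), and the nonlinear residual summed as a near-geometric series against the cutoff scale $M_\varepsilon$ of \cref{def: M} (\cref{lem: bounds on norms,lem: bound on the sum of norms,lemma: estimate of R(x)}). For $p\neq 1/2$ your plan is essentially the paper's proof, and whether one uses McDiarmid or, as the paper does, Chernoff/Hoeffding for the neighborhood counts is a detail.

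The genuine gap is in the case $p=1/2$, and it has two related parts. First, because the theorem's error is multiplicative in $\bar\mu$, all residual terms must be shown to be $o(|\bar\mu|)$, not merely $o(1)$; when $p=1/2$ one has $\mu=2p-1=0$ and $|\bar\mu|$ is only of order $n^{-1/2}$ at best, so your stated targets ``$\nu^T\sqrt{\Delta n}=o(1)$'' for the linear residual and ``$\|\mathcal{R}^{(t)}_\lambda(\bstate{1})\|_\infty=o(1)$'' for the nonlinear one are too weak. The paper calibrates both against the threshold $\delta$ of \cref{extra assumptions on Gamma}: $T$ is chosen so that $\nu^T\sqrt{\Delta n}=o(\delta)$, and the nonlinear residual is bounded by $O(M(|\mu|+\varepsilon)\varepsilon)=O(M\varepsilon^2)=o(\delta)$, which is where the sandwich $m\varepsilon^2\ll\delta$ is actually used. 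Second, and this is the missing idea: no step of your plan produces a lower bound on $|\bar\mu|$, yet every one of these comparisons divides by it. ``Comparing the numerator and denominator of $\bar\mu^{(1)}$'' cannot yield relative error $o(1)$ on the event that $\bar\mu$ is near zero; excluding that event is an anti-concentration statement, $\Pr(|\bar\mu|>\delta)=1-o(1)$, which the paper proves in \cref{thm:lower bound mu} via a folded Berry--Esseen bound (\cref{prop:folded Berry-Essen}), and this is the sole purpose of the third-moment condition \cref{extra assumptions on Gamma}.2 that you cite but never deploy. Relatedly, reaching $\bar\mu^{(1)}$ through the entrywise concentration of $\bstate{1}$ leaves an additive error of order $\varepsilon$, which already swamps the main term when $|\bar\mu|\approx\delta<\varepsilon$; the paper sidesteps this by the exact algebraic identity $\bar\mu^{(1)}=\bar\mu\,(1+\sigma_\lambda(\bstate{0}))$ of \cref{lemma: computation of pt x1}, so that the randomness enters the weighted average only multiplicatively, through the concentration of $\sigma_\lambda(\bstate{0})$ in \cref{lemma: concentration sigma}.
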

\begin{proof}
By \cref{eq: dynamics final form} and \cref{lemma: computation of pt x1} we have that
\begin{equation}\label{eq: main theorem x t+1}
    \bstate{t+1} = \bar\mu\,\big(1 + \sigma_{\lambda}(\bstate{0})\big) \cdot\mathbf{1} + \mathbf{r}^{(t)} + \mathcal{R}_\lambda^{(t)}(\bstate{1}),   
\end{equation}
where $\bar\mu$ is defined in \cref{eq: def mu}, 
$\sigma_{\lambda}(\bstate{0})$ is defined in \cref{eq: sigmalambda},
$\mathbf{r}^{(t)}$ is such that $\| \mathbf{r}^{(t)} \|_\infty \leq \nu^t \sqrt{\Delta n}$ (with $\nu$ and $\Delta$ as defined in \cref{eq: def nu,eq: def delta}),
and $\mathcal{R}_\lambda^{(t)}$ is the operator defined in \cref{eq:matrix Rt}.

In the remainder of the proof, we will show that $\mathcal{R}_\lambda^{(t)}(\bstate{1})$ and $\mathbf{r}^{(t)}$, which we call \textit{residual terms} (respectively \textit{nonlinear} and \textit{linear}), are negligible with respect to the leading term $\bar\mu\,\big(1 + \sigma_{\lambda}(\bstate{0})\big) \cdot\mathbf{1}$, for large enough $t$.

\paragraph{Leading term}
Let us start by considering the leading term of the dynamics.
Note that:
\begin{enumerate}
    \item[(i)] By \cref{lemma: concentration sigma} (\cref{eq: conc sigma}), which we can apply because of \cref{assumptions on Gamma}.1, we have that, with high probability,   
    \[
        \sigma_{\lambda}(\bstate{0}) = \frac{2p(1-p)(1-{s(2\lambda)})}{1-2p(1-p)(1-{s(2\lambda)})} \cdot (1\pm o(1)) = \Omega(1);
    \]
    \item[(ii)] Moreover, we have $1+\sigma_{\lambda}(\bstate{0})=\Omega(1)$, which holds since $|\sigma_{\lambda}(\bstate{0})|<1$, and $\sigma_{\lambda}(\bstate{0})$ does not depend on $n$.
\end{enumerate}
Hence, we have that
\[
    \bar\mu\,\big(1 + \sigma_{\lambda}(\bstate{0})\big) 
    = \bar\mu\,\left(1 + \frac{2p(1-p)(1-{s(2\lambda)})}{1-2p(1-p)(1-{s(2\lambda)})} \pm o(1) \right)
    = \Theta(\bar\mu).
\]

\paragraph{Nonlinear residual term}
Let us now analyze the residual term due to the nonlinear contribution, i.e., $\mathcal{R}_\lambda^{(t)}(\bstate{1})$.
Since $\Gamma$ satisfies \cref{assumptions on Gamma}.1, we can use \cref{prop: bounding x1} with $\gamma = \varepsilon/6$, and get that, with high probability,
\[
    \|\, \bstate{1} - \mu(1+\sigma_\lambda(\bstate{0})) \cdot\mathbf{1}\, \|_\infty \leq \varepsilon,
\] 
for $\mu:=2p-1$. Note that, by \cref{assumptions on Gamma}.3, $\varepsilon$ is such that $\varepsilon>0$ and $\varepsilon=o(1)$ (see \cref{def: epsilon delta}).
Therefore, \cref{lemma: estimate of R(x)} with $\eta=\varepsilon$, gives us that 
\[
    \|\mathcal{R}_\lambda^{(t)} (\bstate{1})\|_\infty < \frac{8C^2|\lambda|M (|\mu|+\varepsilon)\varepsilon}{1-4C|\lambda|\varepsilon M},
\]
where $C\ge 2$ is a positive constant independent of $n$ and
\[
M = M_\varepsilon := \min\left\{
        m \in \mathbb{N} : \frac{\nu^m}{1-\nu} \sqrt{\Delta n} \leq m \varepsilon
    \right\}.
\]
Note that $M$ is well defined due to \cref{assumptions on Gamma}.3.
Then, because \cref{assumptions on Gamma}.3 implies $M\varepsilon = o(1)$, it follows that 
\begin{equation}\label{eq: bound nonlinear contribution}
    \|\mathcal{R}_\lambda^{(t)} (\bstate{1})\|_\infty
    = O(M(|\mu|+\varepsilon)\varepsilon).
\end{equation}

In order to analyze such an error, we need to distinguish the cases $p \neq 1/2$ and $p = 1/2$. In fact, $\bar{\mu}$ assumes different values in the two cases and we will use different arguments to prove that $\|\mathcal{R}_\lambda^{(t)} (\bstate{1})\|_\infty = o(\bar\mu)$.

\noindent
\underline{Case $p \neq 1/2$}.
In this case, it is enough to note that, with high probability, the leading term $\bar\mu$ is such that $ \bar\mu = \Omega(1)$. This is a consequence of the fact that $|\mu-\bar\mu| < \varepsilon$, which is proven in \cref{lemma:bound on mu} (\cref{eq: conc mu}) using $\gamma=\varepsilon/2$. 
Note that we can apply \cref{lemma:bound on mu} since $\varepsilon=o(1)$ by \cref{assumptions on Gamma}.3.
Therefore,
\[
    \|\mathcal{R}_\lambda^{(t)} (\bstate{1})\|_\infty = O(M(|\mu| +\varepsilon) \varepsilon) = O(M\varepsilon) = o(1),
\]
where we used that $|\mu| = O(1)$ and \cref{assumptions on Gamma}.3. 

\noindent
\underline{Case $p=1/2$}. If $p=1/2$, we have $\mu=2p-1=0$. 
This fact has two consequences, which makes this case different from the case $p\neq 1/2$: 
\begin{enumerate}
\item[(i)] \cref{lemma:bound on mu} only gives us an upper bound, namely $|\bar{\mu}| \le \varepsilon$, with high probability, which is not sufficient since we need a lower bound on $|\bar{\mu}|$ that guarantees it is sufficiently far away from 0.
\item[(ii)] The error deriving from the nonlinear contribution is smaller, since $\varepsilon=o(1)$, namely $O(M(|\mu|+\varepsilon)\varepsilon) = O(M\varepsilon^2)$.
\end{enumerate}
To overcome this problem, we need the additional assumptions given in \cref{extra assumptions on Gamma} that allows us to prove an anti-concentration result for $\bar\mu$.
In particular, we proceed as follows. 
First, we consider a $\delta$ which satisfies \cref{extra assumptions on Gamma}.1.
Second, since we have \cref{extra assumptions on Gamma}.1 and \cref{extra assumptions on Gamma}.2, we apply \cref{thm:lower bound mu} with $a=\delta$ and get that
\begin{equation}\label{eq: barmu lower bound}
    \Pr\left( |\bar\mu| > \delta \right) = 1-o(1).
\end{equation}
Third, we note that, by \cref{extra assumptions on Gamma}.1, $O(M(|\mu| +\varepsilon) \varepsilon) =  O(M\varepsilon^2) = o(\delta)$.
Hence, from the above equation, it follows that $\|\mathcal{R}_\lambda^{(t)} (\bstate{1})\|_\infty = o(\bar{\mu})$, with high probability, for every $t>1$.

\paragraph{Linear residual term}
Let us move to the residual term given by the linear contribution, i.e., $\mathbf{r}^{(t)}$. 
In the following, we prove that $\| \mathbf{r}^{(t)} \|_\infty  = o(\bar\mu)$, for large enough $t$.
Also here we distinguish the two cases $p\neq 1/2$ and $p=1/2$.

\noindent
\underline{Case $p \neq 1/2$}.
With high probability, by \cref{lemma:bound on mu} (\cref{eq: conc mu}) with $\gamma=\varepsilon/2$, we have that $\bar\mu > \mu - \varepsilon$, which implies, since $\varepsilon=o(1)$, that $\bar\mu > c\mu$ for some constant $c\in(0,1)$.

\noindent
\underline{Case $p = 1/2$}.
With high probability, as proved in \cref{eq: barmu lower bound}, we have that $\bar\mu > \delta$, for $\delta$ that satisfies \cref{extra assumptions on Gamma}.1.

Putting together the two cases $p \neq 1/2$ and $p = 1/2$, it follows that, with high probability, the leading term $\bar\mu > \min(c\mu, \delta) = \delta$, because $c\mu=\Omega(1)$ and $\delta=o(1)$.
Recall from \cref{lemma: computation of pt x1} that $\| \mathbf{r}^{(t)} \|_\infty \leq \nu^t \sqrt{\Delta n}$ and that, by \cref{assumptions on Gamma}.2, $|\nu|<1$.
Note that, if $\nu^t \sqrt{\Delta n} = o(\delta)$ then $\| \mathbf{r}^{(t)} \|_\infty = o(\delta) = o(\bar\mu)$.
Let us now define $T$ as 
\[
    T := \frac{\log\left( \frac{\sqrt{\Delta n}}{o(\delta)} \right)}{\log(1/\nu)}.
\]
Then, for all times $t\ge T$, we have that $\| \mathbf{r}^{(t)} \|_\infty \le \nu^T \sqrt{\Delta n} = o(\delta)$, namely $\| \mathbf{r}^{(t)} \|_\infty = o(\bar\mu)$, for every $p \in[0,1]$, for all times $t\ge T$. 

Finally note that, by \cref{eq:graph W,eq:graph D,eq: def delta}, it holds that $\Delta \le n^2$. 
Moreover, from \cref{extra assumptions on Gamma}.1, it follows that $\delta \gg \varepsilon^2$ and, by \cref{lemma: lw bound epsilon}, it follows that $\varepsilon = \Omega(\sqrt{\log(n)/n})$. 
Therefore, we conclude that the dynamics converges within time $T\le\frac{\log(n^{5/2} / \log n)}{\log(1/\nu)} =O\left(\frac{\log n}{\log(1/\nu)}\right)$.
\end{proof}

\begin{remark}[Shift depends on $p$, $s$ and $\lambda$]
\cref{theorem: main} essentially claims that, under certain conditions on the hypergraph and when the dynamics starts from a $p$-Rademacher random initialization, it converges with high probability to a configuration where all \nodes have state $\bar\mu \left(1 + \frac{2p(1-p)(1-{s(2\lambda)})}{1-2p(1-p)(1-{s(2\lambda)})} \pm o(1)\right)$, namely to a \textit{shifted} weighted average of the initial states. 
In fact, here $\bar\mu$ is the weighted average of the initial states (what the linear dynamics converges to), and $\frac{2p(1-p)(1-{s(2\lambda)})}{1-2p(1-p)(1-{s(2\lambda)})}$ is a multiplicative factor depending on $p$, $s$ and $\lambda$ that shifts the average due to the nonlinear interaction between the \nodes.
\end{remark}

\begin{remark}[Shift for $s(x)=e^{x}$]\label{eq: shift NML20}
{
Note that if $s(x)=e^{x}$, as modeled in~\cite{PhysRevE.101.032310}, we have $\frac{2p(1-p)(1-e^{2\lambda})}{1-2p(1-p)(1-e^{2\lambda})} \in (-1,1)$ for every $p$ and $\lambda$ and, in particular, that for every $p\in(0,1)$:
\begin{equation}
\frac{2p(1-p)(1-e^{2\lambda})}{1-2p(1-p)(1-e^{2\lambda})}
\begin{cases}
   > 0 & \text{if } \lambda<0,
   \\
   = 0 & \text{if } \lambda=0,
   \\
   < 0 & \text{if } \lambda>0.
\end{cases}
\end{equation}
In other words, the nonlinear effect shifts the state of convergence either toward the initial majority, when $\lambda<0$, or toward 0, when $\lambda>0$. The nonlinear effect is canceled when $\lambda=0$ given that the dynamics reduces to a linear one (as proven in \cref{prop: linear interaction}).
The intensity of the shift, instead, is modeled by the parameter $|\lambda|$ (the bigger $|\lambda|$, the stronger the shift) and by the balance of states in the initial configuration (the smaller $|1/2-p|$, the stronger the shift).
We remark that this is the same effect we see in the mean-field model (\cref{prop: trivial topology}) with $a=pn$ and $b=(1-p)n$, and qualitatively the same that has been shown empirically in~\cite{PhysRevE.101.032310}.}
\end{remark}

\section{On the nonlinear effect}\label{sec: nonlinear effect}
In this section, we prove that the effect of the nonlinear dynamics is negligible for any $t\geq 1$. More precisely, recall that for any fixed $\lambda\in (-1/2,1/2)$, we can write the dynamics as 
\[ \bstate{t+1} = \mathcal{P}^t(\bstate{1}) + \mathcal{R}^{(t)}_\lambda (\bstate{1}) = P\bstate{1} + \mathcal{R}^{(t)}_\lambda (\bstate{1}),
\]
where $\mathcal{P}$ is defined in \cref{eq: op P acts as a matrix} and $\mathcal{R}^{(t)}_\lambda$ is as in \cref{eq:matrix Rt}. We will analyze the two contributions $P\bstate{1}$ and $\mathcal{R}_\lambda(\bstate{1})$ separately. 
In particular, we will fully characterize the first term $P^t\bstate{1}$ as the value the dynamics converges to, containing the effect of the higher-order nonlinear interactions, plus a linear residual term.
Moreover, we will prove that the residual nonlinear contribution $\mathcal{R}_\lambda(\bstate{1})$ is negligible.

\subsection{On the leading term \texorpdfstring{$P^t\bstate{1}$}{}}
To start the analysis of ${P}^t\bstate{1}$, in the next proposition we describe how $\bstate{1}$ depends on the initial state $\bstate{0}$, separating the linear effect in the evolution (i.e., the first order approximation in \cref{pro: 2nd order taylor}) from the nonlinear effect.

\begin{lemma}[The state $\bstate{1}$]\label{lemma: value of xi(1)}
Let $\bstate{0} \in \{-1,+1\}^n$. 
Let $\bstate{1}$ evolve according to \cref{eq:dynamics clean} for $\lambda\in (-1/2,1/2)$, i.e., for every $i$ we have
\[
    \state{1}_i = \frac{1}{\snorm{0}_i} \sum_{ \{j,k\} \in \nbr{i}} {s(\lambda|\state{0}_j-\state{0}_k|)} \cdot \frac{\state{0}_j+\state{0}_k}{2},
\]
where $\snorm{0}_i$ is the normalization factor defined in \cref{eq:normalization}.
For every $i\in V$, define
\begin{equation}\label{eq: def ci}
    c_i := \big| \{ \{j,k\} \in \nbr{i} : \state{0}_j \neq \state{0}_k \} \big|.
\end{equation}
For every $i\in V$, it holds
\[
    \state{1}_i = \bar\mu_i\cdot \left( 1 + \sigma_{\lambda;i}(\bstate{0}) \right),
\]
with
\begin{equation}\label{eq: def mui sigmai}
    \bar\mu_i := \frac{1}{D_{ii}}\sum_{j\in V} W_{ij}\,\state{0}_j\quad
    \quad\text{and}\quad
    \sigma_{\lambda;i}(\bstate{0}) := \frac{c_i\,(1-{s(2\lambda)})}{|\nbr{i}| -c_i\,(1-{s(2\lambda)})},
\end{equation}
where $W=(W_{ij})$ is the adjacency matrix defined in \cref{eq:graph W} and $D=(D_{ii})$ us the diagonal degree matrix defined in \cref{eq:graph D}.
\end{lemma}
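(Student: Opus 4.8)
The plan is to exploit the binary initialization $\state{0}_i \in \{-1,+1\}$ to collapse the nonlinearly-weighted average defining $\state{1}_i$ into an essentially unweighted one, so that the entire nonlinear effect is confined to the normalization factor $\snorm{0}_i$. First I would perform the same dichotomy used in the proof of \cref{prop: trivial topology}: for each pair $\{j,k\}\in\nbr{i}$, either $\state{0}_j = \state{0}_k$ (a \emph{concordant} pair), so that $|\state{0}_j-\state{0}_k| = 0$ and hence $s(\lambda|\state{0}_j-\state{0}_k|) = s(0) = a_0 = 1$ by \cref{assumption: function s} while $\tfrac{\state{0}_j+\state{0}_k}{2} = \state{0}_j$; or $\state{0}_j \neq \state{0}_k$ (a \emph{discordant} pair), so that $|\state{0}_j-\state{0}_k| = 2$, giving $s(\lambda|\state{0}_j-\state{0}_k|) = s(2\lambda)$ and $\tfrac{\state{0}_j+\state{0}_k}{2} = 0$.

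The key observation I would emphasize is that the nonlinear weights are irrelevant in the numerator of $\state{1}_i$: discordant pairs contribute nothing because their average $\tfrac{\state{0}_j+\state{0}_k}{2}$ vanishes, while concordant pairs carry weight exactly $1$. The numerator therefore reduces to $\sum_{\{j,k\}\in\nbr{i}}\tfrac{\state{0}_j+\state{0}_k}{2}$, precisely the quantity appearing in the linear dynamics. Converting this sum over neighboring pairs into the motif-graph matrix $W$ exactly as in the proof of \cref{prop: linear interaction} (using $A_{ijk}=A_{ikj}$ to symmetrize), it equals $\tfrac12 \sum_{j\in V} W_{ij}\state{0}_j = \tfrac12 D_{ii}\bar\mu_i = |\nbr{i}|\,\bar\mu_i$, where the last step uses $D_{ii}=2|\nbr{i}|$ and the definition of $\bar\mu_i$.

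Next I would evaluate the denominator via the same split: the $|\nbr{i}|-c_i$ concordant pairs each contribute $1$ and the $c_i$ discordant pairs each contribute $s(2\lambda)$, where $c_i$ is as in \cref{eq: def ci}, so that $\snorm{0}_i = (|\nbr{i}|-c_i) + c_i\, s(2\lambda) = |\nbr{i}| - c_i\bigl(1 - s(2\lambda)\bigr)$. Dividing the numerator by the denominator then yields $\state{1}_i = |\nbr{i}|\,\bar\mu_i \big/ \bigl(|\nbr{i}| - c_i(1-s(2\lambda))\bigr)$, and a one-line algebraic check confirms the claimed factorization: with $\sigma_{\lambda;i}(\bstate{0}) = c_i(1-s(2\lambda))/(|\nbr{i}| - c_i(1-s(2\lambda)))$ one has $1 + \sigma_{\lambda;i}(\bstate{0}) = |\nbr{i}|/(|\nbr{i}| - c_i(1-s(2\lambda)))$, so multiplying by $\bar\mu_i$ recovers the expression above.

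There is no genuine obstacle in this lemma; it is a direct computation. The only points requiring care are the bookkeeping in the pair-to-edge conversion, namely the factor of $2$ relating $D_{ii}$ to $|\nbr{i}|$ and the double counting of ordered versus unordered pairs, together with the clean structural observation that the nonlinearity enters \emph{only} through $\snorm{0}_i$, since every discordant numerator term is annihilated by a vanishing average regardless of the weight $s(2\lambda)$.
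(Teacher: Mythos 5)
Your proposal is correct and follows essentially the same route as the paper: the same concordant/discordant dichotomy, the observation that discordant pairs vanish in the numerator so the nonlinearity survives only in $\snorm{0}_i$, and the same pair-to-matrix conversion yielding $\bar\mu_i$. The only cosmetic difference is that the paper routes the numerator through the counts $a_i,b_i$ (writing it as $a_i-b_i$) before converting to $W$, whereas you convert directly; the algebra is identical.
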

\begin{proof}
Instead of using Taylor expansion to approximate $\bstate{1}$ via the linear dynamics (as done in \cref{pro: 2nd order taylor}), we consider the full contribution of the nonlinear dynamics. 
In fact, as for the mean-field model (see \cref{prop: trivial topology}), we can exhaustively%
\footnote{Differently than in the mean-field case, the dynamics on a general hypergraph does not converge in one single step.} analyze $\bstate{1}$.
Note that, since $\bstate{0} \in \{-1,+1\}^n$, for every $i\in V$ it holds that
\begin{align*}
    \state{1}_{i} 
    &= \frac{1}{\snorm{0}_i} \left(
        \sum_{\substack{\{j,k\}\in \nbr{i}\\\state{0}_j=\state{0}_k}} {s(\lambda|\state{0}_j-\state{0}_k|)} \,  \frac{\state{0}_{j}+\state{0}_{k}}{2}
        +\sum_{\substack{\{j,k\}\in \nbr{i}\\\state{0}_j\neq\state{0}_k}} {s(\lambda|\state{0}_j-\state{0}_k|)} \,  \frac{\state{0}_{j}+\state{0}_{k}}{2}
    \right)
    \\
    &=\frac{1}{\snorm{0}_i} \sum_{\substack{\{j,k\}\in \nbr{i}\\\state{0}_j=\state{0}_k}} \state{0}_j .
\end{align*}
Let us now define the following quantities, for every $i\in V$:
\begin{align*}
\begin{cases}
a_i := |\{ \{j,k\} \in \nbr{i} : \state{0}_j = \state{0}_k = +1 \}|,
\\
b_i := |\{ \{j,k\} \in \nbr{i} : \state{0}_j = \state{0}_k = -1 \}|,
\\
c_i := |\{ \{j,k\} \in \nbr{i} : \state{0}_j \neq \state{0}_k \}|.
\end{cases}
\end{align*}
Note that $a_i+b_i+c_i=|\nbr{i}|$. 
By using the definition of $\sigma_{\lambda;i}(\bstate{0})$ as in \cref{eq: def mui sigmai}, we get
\begin{align*}
    \state{1}_{i} 
    &= \frac{a_i-b_i}{a_i + b_i + c_i\cdot{s(2\lambda)}}
    = \frac{a_i-b_i}{|\nbr{i}| - c_i\,(1-{s(2\lambda)})}
    = \frac{a_i-b_i}{|\nbr{i}|} 
    \cdot \left( 1 + \sigma_{\lambda;i}(\bstate{0}) \right)
    \\
    &= \frac{1}{|\nbr{i}|} \sum_{ \{j,k\} \in \nbr{i} } \frac{(\state{0}_j+\state{0}_k)}{2}
    \cdot \left( 1 + \sigma_{\lambda;i}(\bstate{0}) \right)
    \\
    &= \frac{1}{D_{ii}} \sum_{ j \in V } W_{ij} \state{0}_j
    \cdot \left( 1 + \sigma_{\lambda;i}(\bstate{0}) \right)
    = \bar\mu_i
    \cdot \left( 1 + \sigma_{\lambda;i}(\bstate{0}) \right),
\end{align*}
where, in the last equality, we used that by definition $\bar\mu_i := \frac{1}{D_{ii}} \sum_{ j \in V } W_{ij} \state{0}_j$. This concludes the proof.
\end{proof}

In the next proposition, we analyze the nonlinear effect on the dynamics, which depends on the quantities $\sigma_{\lambda;i}(\bstate{0})$ defined in \cref{eq: def mui sigmai}.
In particular, we show that $P^t\bstate{1}$ converges to a value that is multiplicatively shifted with respect to the convergence value of $P^t\bstate{0}$, as an effect of the nonlinear contribution.

\begin{proposition}[Multiplicative shift as a nonlinear effect]\label{lemma: computation of pt x1}
Let $i\in V$ and let $\bstate{0} \in \{-1,+1\}^n$. For any $\lambda\in (-1/2,1/2)$, let $\sigma_{\lambda;i}(\bstate{0})$ as in \cref{eq: def mui sigmai}. We define
\begin{equation}\label{eq: sigmalambda}
    \sigma_{\lambda}(\bstate{0}) := \frac{\sum_{i \in V} \left( \sum_{j \in V} W_{ij}\,\sigma_{\lambda;j}(\bstate{0})\right) \state{0}_i }{\sum_{i \in V} D_{ii}\,\state{0}_i},
\end{equation}
where $W=(W_{ij})$ is the adjacency matrix defined in \cref{eq:graph W} and $D=(D_{ii})$ is the diagonal degree matrix defined in \cref{eq:graph D}.
It holds that
\[
     P^t\bstate{1} = \bar\mu\,\big(1 + \sigma_{\lambda}(\bstate{0})\big) \cdot\mathbf{1} + \mathbf{r}^{(t)},
    \quad\text{with }
    \| \mathbf{r}^{(t)} \|_\infty \leq \nu^t \sqrt{\Delta n},
\]
where $\bar\mu$ is as in \cref{eq: def mu}, and $\nu$ and $\Delta$ are defined as in \cref{eq: def nu,eq: def delta}.
\end{proposition}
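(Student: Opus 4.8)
The plan is to treat $\bstate{1}$ as the initial condition of a linear averaging dynamics and apply \cref{prop: linear dynamics convergence} as a black box. Indeed $P^t\bstate{1}$ is simply $P^t$ applied to the vector $\bstate{1}$, so \cref{prop: linear dynamics convergence}, read with $\bstate{1}$ playing the role of the initial state, gives directly
\[
    P^t\bstate{1} = \left(\frac{\sum_{i\in V}D_{ii}\state{1}_i}{\sum_{i\in V}D_{ii}}\right)\mathbf{1} + \mathbf{r}^{(t)}, \qquad \|\mathbf{r}^{(t)}\|_\infty \le \nu^t\sqrt{\Delta n}.
\]
To legitimately invoke the residual bound I must first check $\|\bstate{1}\|_\infty \le 1$; this holds because $\bstate{0}\in\{-1,+1\}^n$ gives $\|\bstate{0}\|_\infty = 1$, and then \cref{rem: infty norm x t} yields $\|\bstate{1}\|_\infty \le 1$. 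It thus remains only to evaluate the coefficient of $\mathbf{1}$, i.e., to show that the $D$-weighted average of $\bstate{1}$ equals $\bar\mu(1+\sigma_\lambda(\bstate{0}))$.

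For this, I would substitute the closed form of $\bstate{1}$ from \cref{lemma: value of xi(1)}, namely $\state{1}_i = \bar\mu_i(1+\sigma_{\lambda;i}(\bstate{0}))$ with $D_{ii}\bar\mu_i = \sum_{j\in V}W_{ij}\state{0}_j$, so that
\[
    \sum_{i\in V}D_{ii}\state{1}_i = \sum_{i\in V}\Bigl(\sum_{j\in V}W_{ij}\state{0}_j\Bigr)\bigl(1+\sigma_{\lambda;i}(\bstate{0})\bigr).
\]
This splits into a \emph{linear} term and a \emph{nonlinear} term. For the linear term, summing $W_{ij}$ over $i$ and using the symmetry of $W$ gives $\sum_{i}W_{ij} = D_{jj}$, whence $\sum_{i}\sum_{j}W_{ij}\state{0}_j = \sum_{j}D_{jj}\state{0}_j = \bar\mu\sum_{j}D_{jj}$ by the definition of $\bar\mu$ in \cref{eq: def mu}.

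The key algebraic step is the nonlinear term $\sum_{i}\bigl(\sum_{j}W_{ij}\state{0}_j\bigr)\sigma_{\lambda;i}(\bstate{0})$. Using the symmetry $W_{ij}=W_{ji}$ and relabeling the summation indices $i\leftrightarrow j$, this double sum equals exactly the numerator $\sum_{i}\bigl(\sum_{j}W_{ij}\sigma_{\lambda;j}(\bstate{0})\bigr)\state{0}_i$ appearing in the definition \cref{eq: sigmalambda} of $\sigma_\lambda(\bstate{0})$. Hence the nonlinear term equals $\sigma_\lambda(\bstate{0})\cdot\sum_{i}D_{ii}\state{0}_i = \sigma_\lambda(\bstate{0})\,\bar\mu\sum_{i}D_{ii}$. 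Combining the two contributions yields $\sum_{i}D_{ii}\state{1}_i = \bar\mu\,(1+\sigma_\lambda(\bstate{0}))\sum_{i}D_{ii}$, and dividing by $\sum_{i}D_{ii}$ produces exactly the claimed coefficient.

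The only step requiring care — the \emph{main obstacle}, though it is really a bookkeeping point rather than a genuine difficulty — is recognizing that the symmetry of $W$ is precisely what lets the index swap convert the $\sigma_{\lambda;i}$-weighted sum into the numerator of $\sigma_\lambda(\bstate{0})$; without exploiting $W_{ij}=W_{ji}$ the two expressions would not obviously coincide. Everything else reduces to applying \cref{prop: linear dynamics convergence} verbatim and verifying its norm hypothesis through \cref{rem: infty norm x t}.
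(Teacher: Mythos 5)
Your proof is correct and follows essentially the same route as the paper: apply \cref{prop: linear dynamics convergence} to $\bstate{1}$, substitute the closed form of $\bstate{1}$ from \cref{lemma: value of xi(1)}, and use the symmetry of $W$ to swap summation indices so that the $D$-weighted average of $\bstate{1}$ becomes $\bar\mu(1+\sigma_\lambda(\bstate{0}))$. Your explicit check that $\|\bstate{1}\|_\infty\le 1$ via \cref{rem: infty norm x t} is a small point of extra care not spelled out in the paper, but it does not change the argument.
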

\begin{remark}[Comparison between ${P}\bstate{1}$ and ${P}\bstate{0}$]
Note that the proposition above allows us to compare ${P}\bstate{1}$ (which contains the nonlinear effect in $\bstate{1}$) with the linear dynamics ${P}\bstate{0}$ (see \cref{prop: linear dynamics convergence}), highlighting how the difference between the two evolutions depends on the multiplicative shift $\sigma_{\lambda}(\bstate{0})$, defined in \cref{eq: sigmalambda}. 
\end{remark}
\begin{proof}[Proof of \cref{lemma: computation of pt x1}]
Recall the definition of $c_i := \big| \{ \{j,k\} \in \nbr{i} : \state{0}_j \neq \state{0}_k \} \big|$. By using \cref{lemma: value of xi(1)}, we get that for every $i \in V$
\[
    \state{1}_{i} = \bar\mu_i \cdot \left( 1 + \sigma_{\lambda;i}(\bstate{0}) \right),
\]
with
\[
    \bar\mu_i := \frac{1}{D_{ii}}\sum_{j\in V} W_{ij}\,\state{0}_j
    \quad\text{and}\quad
    \sigma_{\lambda;i}(\bstate{0}) := \frac{c_i\,(1-{s(2\lambda)})}{|\nbr{i}| -c_i\,(1-{s(2\lambda)})}.
\]
By using \cref{prop: linear dynamics convergence} we get that 
\[
   {P}^t(\bstate{1}) = \bar\mu^{(1)} \mathbf{1} + \mathbf{r}^{(t)},
    \quad\text{with }
    \bar\mu^{(1)} := \frac{\sum_{i\in V}D_{ii} x_i^{(1)}}{\sum_{i \in V} D_{ii}}
    \quad\text{and }
    \| \mathbf{r}^{(t)} \|_\infty \leq \nu^t \sqrt{\Delta n}.
\] 
Now our goal is to explicitly write how $\bar\mu^{(1)}$ depends on $\bstate{0}$, instead of $\bstate{1}$ as described by the previous equation. Using now \cref{lemma: value of xi(1)} together with the fact that the weighted adjacency matrix satisfies $W_{ij}=W_{ji}$ for every $i,j\in V$, we rewrite the numerator of $\bar\mu^{(1)}$ as follows
\begin{align*}
    &\sum_{i\in V} D_{ii}\,\state{1}_i
    = \sum_{i \in V} D_{ii} \cdot \frac{1}{D_{ii}}\sum_{j\in V} W_{ij}\,\state{0}_j \cdot \left( 1 + \sigma_{\lambda;i}(\bstate{0}) \right)
    \\
    &= \sum_{j \in V} \state{0}_j \sum_{i \in V} W_{ji}
    \cdot \left( 1 + \sigma_{\lambda;i}(\bstate{0}) \right)
    = \sum_{j \in V} D_{jj}\,\state{0}_j
    \cdot \left( 1 + \frac{1}{D_{jj}} \sum_{i\in V} W_{ji}\, \sigma_{\lambda;i}(\bstate{0}) \right).
\end{align*}
Therefore, we derive that
\[
    \bar\mu^{(1)} 
    = \frac{\sum_{i \in V} D_{ii} x_i^{(0)} (1 + \frac{1}{D_{ii}} \sum_{j \in V} W_{ij}\,\sigma_{\lambda;j}(\bstate{0}))}{\sum_{i \in V} D_{ii}}.
\]
Hence, by defining $\sigma_{\lambda}(\bstate{0})$ as in \cref{eq: sigmalambda}, and for $\bar\mu$ as defined in \cref{eq: def mu}, we conclude that
\[
    {P}^t\bstate{1} = \bar\mu\,\big(1 + \sigma_{\lambda}(\bstate{0})\big) \cdot\mathbf{1} + \mathbf{r}^{(t)}.
\]
\end{proof}

\subsection{On the residual nonlinear term \texorpdfstring{$\mathcal{R}_\lambda^{(t)}(\bstate{1})$}{}}\label{sec: Rt}
The main result of this section is the proof of the fact that $\mathcal{R}_\lambda^{(t)}(\bstate{1})$ is negligible with respect to the nonlinear effect in the expression of $\mathcal{P}(\bstate{1})$, described by $\bar{\mu}\sigma_{\lambda}(\bstate{0})$.
In particular, we will bound the $\ell^\infty$ norm of $\mathcal{R}_\lambda^{(t)}(\bstate{1})$ with respect to the following parameter, that depends on the hypergraph topology (i.e., on $\nu,\Delta,n$):
\begin{equation}\label{def: M}
    M_\xi := \min\left\{
        m \in \mathbb{N} : \frac{\nu^m}{1-\nu} \sqrt{\Delta n} \leq m {\xi}
    \right\}, \qquad \xi>0.
\end{equation}
Note that that $M$ is well defined since, under \cref{assumptions on Gamma}, it always exists $m \in \mathbb{N}$ such that $\frac{\nu^m}{1-\nu} \sqrt{\Delta n} \leq m \varepsilon$.
The rigorous statement is given below.

\begin{proposition}[Nonlinear residual estimation]\label{lemma: estimate of R(x)}
Let ${\eta}>0$ be such that $\eta = o(1)$ and let $M_\eta$ be as in \cref{def: M}, for $\Delta,n,\nu$ such that $M_\eta$ exists.
Let also $\eta$ be such that $\eta M_\eta  = o(1)$. 
For $\rho \in \mathbb{R}$, assume that $|\state{1}_i - \rho| < \eta$, for every $i$. 
Let $\mathcal{R}_\lambda^{(t)}$ as in \cref{eq:matrix Rt}.
It holds that
\[
    \|\mathcal{R}_\lambda^{(t)} (\bstate{1})\|_\infty <
    \frac{8C^2 |\lambda| \eta M_\eta (|\rho|+{\eta})}{1-4C |\lambda| \eta M_\eta},
\]
where the constant $C>2$ is that appearing in \cref{lem: bounds on norms}.
\end{proposition}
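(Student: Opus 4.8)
The plan is to treat the nonlinear evolution as a perturbation of the linear one and to sum up the step-by-step perturbations through a telescoping identity, controlling the accumulation via the contraction encoded in $M_\eta$. Write $w^{(k)} := (\mathcal{P}+\lambda\mathcal{Q}_\lambda)^k(\bstate{1})$, so that $w^{(0)}=\bstate{1}$ and $w^{(k+1)} = \mathcal{P}(w^{(k)}) + \lambda\,\mathcal{Q}_\lambda(w^{(k)})$. Since $\mathcal{P}$ is \emph{linear} and acts as the row-stochastic matrix $P$ (hence $\ell^\infty$-nonexpansive, $\|\mathcal{P}\|_\infty\le 1$), the difference of the two flows telescopes exactly:
\[
\mathcal{R}_\lambda^{(t)}(\bstate{1}) = (\mathcal{P}+\lambda\mathcal{Q}_\lambda)^t(\bstate{1}) - \mathcal{P}^t(\bstate{1}) = \sum_{k=0}^{t-1} \mathcal{P}^{\,t-1-k}\big(\lambda\,\mathcal{Q}_\lambda(w^{(k)})\big),
\]
because each summand is $\mathcal{P}^{t-1-k}$ applied to $w^{(k+1)}-\mathcal{P}(w^{(k)})=\lambda\mathcal{Q}_\lambda(w^{(k)})$ and $\mathcal{P}^{t-1-k}$ distributes over this difference. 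Taking $\ell^\infty$ norms reduces the task to bounding $|\lambda|\sum_{k=0}^{t-1}\|\mathcal{Q}_\lambda(w^{(k)})\|_\infty$.

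Two estimates from \cref{lem: bounds on norms} drive the argument. First, the nonlinear operator $\mathcal{Q}_\lambda$ is controlled by the \emph{spread} of its argument: with $d_k := \|w^{(k)}-\bar{w}^{(k)}\mathbf{1}\|_\infty$ the $\ell^\infty$ deviation of $w^{(k)}$ from its degree-weighted mean $\bar{w}^{(k)}$, one has $\|\mathcal{Q}_\lambda(w^{(k)})\|_\infty \le 4C^2\,d_k\,(|\rho|+\eta)$ --- each entry of $\mathcal{Q}_\lambda$ in \cref{eq:matrix Qt} being assembled from centered state-differences of order $d_k$ weighted by states of order $\|w^{(k)}\|_\infty\le|\rho|+\eta$ --- while its own spread is much smaller, $\mathrm{spread}(\mathcal{Q}_\lambda(w^{(k)}))\le 2C\,d_k^2$, because the degree-weighted mean of $w^{(k)}$ cancels in those centered differences. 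Second, the threshold $M_\eta$ of \cref{def: M} is tailored precisely to bound the \emph{total} deviation generated by the linear flow: for any $\tilde v$ with $\|\tilde v\|_\infty=d$, splitting the time sum at $M_\eta$ and using $\ell^\infty$-nonexpansiveness on the first $M_\eta$ terms together with the contraction $\|\mathcal{P}^s\tilde v\|_\infty\le\nu^s\sqrt{\Delta n}\,d$ from \cref{prop: linear dynamics convergence} on the geometric tail gives $\sum_{s\ge 0}\|\mathcal{P}^s\tilde v\|_\infty\le M_\eta d+\frac{\nu^{M_\eta}}{1-\nu}\sqrt{\Delta n}\,d\le 2M_\eta d$.

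I would then close a self-consistent bound for the accumulated spread $\Sigma:=\sum_{k\ge0}d_k$. Unrolling $w^{(k+1)}=\mathcal{P}(w^{(k)})+\lambda\mathcal{Q}_\lambda(w^{(k)})$ writes each $d_k$ as the linear flow of the initial spread $d_0<\eta$ plus the linearly-propagated spreads of the past increments $\lambda\mathcal{Q}_\lambda(w^{(j)})$, $j<k$; for this recursion only the (small) spread $2C d_j^2\le 2C\eta d_j$ of each increment is relevant. Summing over $k$ and inserting the time-sum estimate $\sum_s\|\mathcal{P}^s(\cdot)\|_\infty\le 2M_\eta(\cdot)$ yields
\[
\Sigma \le 2M_\eta\,d_0 + 2M_\eta\cdot 2C|\lambda|\eta\,\Sigma \le 2M_\eta\eta + 4C|\lambda|\eta M_\eta\,\Sigma,
\]
which, in the contractive regime, inverts to $\Sigma\le 2M_\eta\eta/(1-4C|\lambda|\eta M_\eta)$. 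Substituting this into the telescoped bound $\|\mathcal{R}_\lambda^{(t)}(\bstate{1})\|_\infty\le|\lambda|\sum_{k}\|\mathcal{Q}_\lambda(w^{(k)})\|_\infty\le 4C^2|\lambda|(|\rho|+\eta)\,\Sigma$ reproduces exactly the asserted estimate. The hypothesis $|\state{1}_i-\rho|<\eta$ furnishes the base case $d_0<\eta$ and $\|\bstate{1}\|_\infty\le|\rho|+\eta$, and one verifies along the induction that $\|w^{(k)}\|_\infty\le|\rho|+\eta$ and $d_k=o(1)$ persist once $\Sigma=o(1)$ is in hand.

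The main obstacle is precisely the nonlinear feedback on the spread: a priori $\mathcal{Q}_\lambda$ could enlarge $d_k$ and destroy summability, so one must show the step-to-step map is contractive, i.e. that the ratio $4C|\lambda|\eta M_\eta$ is strictly below $1$. This holds because $|\lambda|<1/2$, $C$ is an absolute constant, and $\eta M_\eta=o(1)$ by hypothesis --- exactly the regime in which the self-consistent inequality can be inverted into the geometric factor $1/(1-4C|\lambda|\eta M_\eta)$. The delicate part is the bookkeeping that produces the spread-scale $\eta$ (rather than the full amplitude $|\rho|+\eta$) in this ratio, which rests on the cancellation of the degree-weighted mean in the centered differences defining $\mathcal{Q}_\lambda$, and on absorbing the $\ell^2$-to-$\ell^\infty$ conversion factors $\sqrt{\Delta n}$ into the tail estimate via \cref{def: M}.
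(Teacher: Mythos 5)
Your strategy is sound and genuinely different from the paper's. The paper expands $\mathcal{R}^{(t)}_\lambda=(\mathcal{P}+\lambda\mathcal{Q}_\lambda)^t-\mathcal{P}^t=\sum_{k=1}^{t}\lambda^k\mathcal{R}^{(t)}_k$, grouping the composite operators by the number of $\mathcal{Q}_\lambda$ factors, and proves $\|\mathcal{R}^{(t)}_k(\bstate{1})\|_\infty\le 2C(4C\eta M_\eta)^k\|\bstate{1}\|_\infty$ by nested applications of \cref{lem: bound on the sum of norms}; in that decomposition $\mathcal{Q}_\lambda$ is only ever applied to vectors of the form $P^m(\cdot)$, whose spread is $O(\eta)$ and whose sup-norm is itself $O(\eta)$ after the first $\mathcal{Q}_\lambda$, so the crude estimate $\|\mathcal{Q}_\lambda(\mathbf{z})\|_\infty\le C\,\max_{i,j}|z_i-z_j|\,\|\mathbf{z}\|_\infty$ of \cref{lem: bounds on norms} suffices, and the geometric series in $4C|\lambda|\eta M_\eta$ is summed explicitly. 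You instead use a Duhamel-type telescoping along the true nonlinear trajectory (valid because $\mathcal{P}$ is linear) and close a Gr\"onwall-type self-consistent inequality for the accumulated deviation $\Sigma=\sum_k d_k$, obtaining the same geometric factor by inverting $\Sigma\le 2M_\eta\eta+4C|\lambda|\eta M_\eta\Sigma$. Your route avoids the combinatorial bookkeeping over operator words and the separate treatment of $k=1,2,3$, and with the quadratic estimate used in the final step as well it would even yield a bound of order $|\lambda|\eta^2M_\eta$, stronger than the stated one when $\eta\ll|\rho|$.

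The one point you must repair is the provenance of your second key estimate. You attribute both estimates to \cref{lem: bounds on norms}, but that lemma contains only the crude bound and its $\nu^m\sqrt{\Delta n}$ variant; the claim $\mathrm{spread}\big(\mathcal{Q}_\lambda(w^{(k)})\big)\le 2C d_k^2$ appears nowhere in the paper, and it is genuinely indispensable to your argument: in your telescoping, $\mathcal{Q}_\lambda$ hits the full iterates $w^{(k)}$, whose sup-norm is of order $|\rho|=\Theta(1)$, so the crude bound would only give a spread of order $d_k|\rho|$ for the increments, the feedback gain in your recursion would be $O(|\lambda| M_\eta|\rho|)$ rather than $O(|\lambda|\eta M_\eta)$, and the self-consistent inequality would not invert. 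Fortunately the quadratic estimate is true, and the mechanism you name is the right one: writing $w:=w^{(k)}$ and $w_j=\bar w+\delta_j$ with $|\delta_j|\le d_k$ in \cref{eq:matrix Qt}, the coefficient of $\bar w$ vanishes identically, since by $\sum_{j,k\in V}A_{ijk}=D_{ii}=2|\nbr{i}|$ one has
\[
\sum_{j,k\in V}A_{ijk}\,\frac{1}{|\nbr{i}|}\sum_{\{\ell,m\}\in\nbr{i}}\big(|w_j-w_k|-|w_\ell-w_m|\big)
=2\sum_{\{j,k\}\in\nbr{i}}|w_j-w_k|-2\sum_{\{\ell,m\}\in\nbr{i}}|w_\ell-w_m|=0,
\]
leaving only the part weighted by $\delta_j$, bounded by $\mathrm{spread}(w)\cdot d_k\le 2d_k^2$, and the Taylor-error part, bounded by $O\big(|\lambda|\,d_k^2\,\|w\|_\infty\big)$. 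You should isolate and prove this as a separate lemma rather than cite \cref{lem: bounds on norms} for it. With that lemma in place, together with the routine bootstrap maintaining $d_k\le 2\eta$ and $\|w^{(k)}\|_\infty\le|\rho|+\eta$ along the induction (which you only sketch, but which closes since $\eta M_\eta=o(1)$), your argument is complete.
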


Note that \cref{lemma: estimate of R(x)} will be used in the proof of \cref{theorem: main}, where $\rho$ represents the expected value the dynamics converges to, while $\eta$ represents how far the dynamics is from $\rho$ at time $t=1$. The assumptions on $\eta,M_\eta$ are satisfied by \cref{assumptions on Gamma} and \cref{extra assumptions on Gamma}, while we will prove in \cref{prop: bounding x1} that the assumption on $|\state{1}_i - \rho| < \eta$ is satisfied, with high probability, whenever the initial state is a $p$-Rademacher random vector.

Before discussing the proof of \cref{lemma: estimate of R(x)}, we need the following two auxiliary results (\cref{lem: bounds on norms}, \cref{lem: bound on the sum of norms}).
\begin{lemma}\label{lem: bounds on norms}
Let $\mathbf{y} \in \mathbb{R}^n$ such that $\|\mathbf{y}\|_\infty\leq 1$. Let ${P},\mathcal{Q}_\lambda$ be as in \cref{eq:nonlinear dynamics matrix apx}. For every $m \in \mathbb{N}$, there exists a positive finite constant $C>2$ such that
\[
    \|\mathcal{Q}_\lambda  ({P}^m \mathbf{y})\|_\infty \leq
    \begin{cases}
        C \, \displaystyle \max_{i,j\in V}|y_i - y_j| \|\mathbf{y}\|_\infty  \, ,
        \\
        C \, \nu^m \sqrt{\Delta n} \, \|\mathbf{y}\|_\infty \, ,
    \end{cases}
\]
where $\nu$ and $\Delta$ are defined as in \cref{eq: def nu} and \cref{eq: def delta}, respectively.
\end{lemma}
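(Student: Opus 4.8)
The plan is to estimate the vector $\mathcal{Q}_\lambda(\mathbf{z})$ entrywise, where $\mathbf{z} := P^m\mathbf{y}$, and to reduce both claimed bounds to controlling the single quantity $\mathrm{osc}(\mathbf{z}) := \max_{p,q\in V}|z_p-z_q|$. First I would record two elementary facts. Since $P=D^{-1}W$ is row-stochastic with nonnegative entries (see \cref{eq: matrix P}), so is $P^m$; hence $\|\mathbf{z}\|_\infty\le\|\mathbf{y}\|_\infty\le 1$. Moreover $\frac{1}{D_{ii}}\sum_{j,k\in V}A_{ijk}=1$ by \cref{eq:graph D}. Expanding $[\mathcal{Q}_\lambda(\mathbf{z})]_i$ through its definition \cref{eq:matrix Qt} (with $\bstate{t}$ read as $\mathbf{z}$), the prefactor $1/\lambda$ cancels the explicit $\lambda$ multiplying the first-order difference term, and turns the remainder $\mathcal{E}_{\lambda;i,j,k}=O\big(\lambda^2\max_{\{j,k\}\in\nbr{i}}|z_j-z_k|^2\big)$ into a contribution of order $O\big(|\lambda|\,\mathrm{osc}(\mathbf{z})^2\big)$.

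For the core estimate I would bound the first-order part by observing that, for every fixed pair, $\big||z_j-z_k|-|z_\ell-z_m|\big|\le\max(|z_j-z_k|,|z_\ell-z_m|)\le\mathrm{osc}(\mathbf{z})$, so that $\big|\tfrac{\lambda}{|\nbr{i}|}\sum_{\{\ell,m\}\in\nbr{i}}(|z_j-z_k|-|z_\ell-z_m|)\big|\le|\lambda|\,\mathrm{osc}(\mathbf{z})$. Since $|\lambda|<1/2$ and $\mathrm{osc}(\mathbf{z})\le 2\|\mathbf{z}\|_\infty\le 2$, the remainder obeys $O(|\lambda|\,\mathrm{osc}(\mathbf{z})^2)\le C'\,\mathrm{osc}(\mathbf{z})$ for a constant $C'$ depending only on $s$. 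Weighting by $A_{ijk}$, using $\frac{1}{D_{ii}}\sum_{j,k}A_{ijk}=1$ together with $|z_j|\le\|\mathbf{z}\|_\infty\le\|\mathbf{y}\|_\infty$, this gives
\[
    \big|[\mathcal{Q}_\lambda(\mathbf{z})]_i\big|\le (1+C')\,\mathrm{osc}(\mathbf{z})\,\|\mathbf{y}\|_\infty
    \qquad\text{uniformly in }i,
\]
hence $\|\mathcal{Q}_\lambda(P^m\mathbf{y})\|_\infty\le C\,\mathrm{osc}(\mathbf{z})\,\|\mathbf{y}\|_\infty$ with $C:=\max(1+C',3)>2$.

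It then remains to bound $\mathrm{osc}(\mathbf{z})$ in the two ways required. For the first line, each $z_p=\sum_q(P^m)_{pq}y_q$ is a convex combination of the entries of $\mathbf{y}$, so $z_p\in[\min_q y_q,\max_q y_q]$ for all $p$ and therefore $\mathrm{osc}(\mathbf{z})\le\max_{i,j\in V}|y_i-y_j|$. For the second line, I would apply \cref{prop: linear dynamics convergence} to $\mathbf{y}/\|\mathbf{y}\|_\infty$ and rescale by $\|\mathbf{y}\|_\infty$, writing $\mathbf{z}=\bar\mu\mathbf{1}+\mathbf{r}$ with $\|\mathbf{r}\|_\infty\le\nu^m\sqrt{\Delta n}\,\|\mathbf{y}\|_\infty$; then $\mathrm{osc}(\mathbf{z})=\max_{p,q}|r_p-r_q|\le 2\|\mathbf{r}\|_\infty\le 2\nu^m\sqrt{\Delta n}\,\|\mathbf{y}\|_\infty$. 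Substituting into the core estimate produces an extra factor $\|\mathbf{y}\|_\infty$, which I absorb using $\|\mathbf{y}\|_\infty\le 1$, yielding the second bound after enlarging $C$.

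The only genuinely delicate point is the uniform control of the Taylor remainder: one must verify that the implied constant in $\mathcal{E}_{\lambda;i,j,k}=O(\lambda^2\max|z_j-z_k|^2)$ is independent of $n$, $i$, and $\lambda$. This follows from \cref{assumption: function s}: since $s$ is analytic at $0$ and $|\lambda|\,|z_j-z_k|\le 2|\lambda|<1$, the part of the power series beyond the linear term is bounded by $\lambda^2|z_j-z_k|^2$ times a convergent numerical series in the coefficients $(a_k)$ of $s$, hence by a constant depending only on $s$. Everything else is bookkeeping, so I expect no further difficulty once this constant is pinned down.
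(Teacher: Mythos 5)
Your proposal is correct and follows essentially the same route as the paper: bound $\|\mathcal{Q}_\lambda(P^m\mathbf{y})\|_\infty$ entrywise by a constant times the oscillation of $\mathbf{z}=P^m\mathbf{y}$ times $\|\mathbf{z}\|_\infty$, then control that oscillation in two ways — via row-stochasticity of $P^m$ (max–min preservation) for the first bound, and via \cref{prop: linear dynamics convergence} for the second. Your minor refinements (the $|a-b|\le\max(a,b)$ trick, rescaling $\mathbf{y}$ by $\|\mathbf{y}\|_\infty$ before invoking \cref{prop: linear dynamics convergence}, and the explicit justification that the Taylor-remainder constant depends only on $s$) are sound but do not change the substance of the argument.
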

\begin{proof}
For short, we set $\mathbf{z}=P^m \mathbf{y}$. Since $\|P\|_\infty \leq 1$, it follows that $\|\mathbf{z}\|_\infty \le \|\mathbf{y}\|_\infty \leq 1$. Then by \cref{eq:matrix Qt}, we have that
\begin{align}
[\mathcal{Q}_\lambda(\mathbf{z})]_{i} 
&= \frac{1}{\lambda} \frac{1}{D_{ii}} \sum_{j\in V} \left[
    \sum_{k\in V} A_{ijk} \left(
        \frac{\lambda}{|\nbr{i}|} \sum_{\{\ell,r\}\in\nbr{i}} \left( 
            |z_j-z_k| - |z_\ell-z_r| 
        \right) + \mathcal{E}_{\lambda;i,j,k}^{(t)} 
    \right) \right] z_j
\notag\\
&= \sum_{j\in V}\sum_{k\in V} \sum_{\{\ell,r\}\in\nbr{i}}
\frac{A_{ijk}}{|\nbr{i}|\,D_{ii}} \left( \left(
    |z_j-z_k| - |z_\ell-z_r|
\right)
+ \frac{1}{\lambda}\mathcal{E}_{\lambda;i,j,k}^{(t)}\right) z_j
\notag\\
&\leq \left( 2\max_{\{j,k\}\in N_i} |z_j - z_k| + O\Big(\lambda \max_{\{j,k\}\in N_i} |z_j - z_k|^2\Big)\right) \, \|\mathbf{z}\|_\infty
\notag\\
&\leq C\max_{j,k \in V} |z_j - z_k| \|\mathbf{z}\|_\infty,
\label{eq: bound on Q(z)i}
\end{align}
where $C>2$ is a positive constant and where we used \cref{eq:nonlinear dynamics matrix apx,eq:graph D} to get the last inequality.
In particular, note that
$\max_{j\in V}\,[P \mathbf{y}]_j
=\max_{j\in V} \sum_{\ell\in V} \frac{W_{j\ell}}{D_{jj}} y_\ell
\le \max_{\ell\in V} y_\ell$ 
since $\sum_{\ell\in V} \frac{W_{j\ell}}{D_{jj}}=1$, and similarly
$\min_{j\in V}\,[P \mathbf{y}]_j \ge \min_{\ell\in V} y_\ell$.
The same holds also for $P^m$ since the matrix is still row-stochastic.
Therefore:
\begin{equation}\label{eq: first upper bound on the difference of pky}
    \max_{j,k \in V} \left|[P^m \mathbf{y}]_j - [P^m \mathbf{y}]_k\right| 
    = \max_{j\in V}\,[P^m \mathbf{y}]_j - \min_{k\in V}\,[P^m \mathbf{y}]_k 
    \leq \max_{j\in V} y_j - \min_{k\in V} y_k
    = \max_{j,k \in V}|y_j - y_k|.
\end{equation}
Therefore, by using that $\mathbf{z}=P^m\mathbf{y}$, that $\|\mathbf{z}\|_\infty \leq \|\mathbf{y}\|_\infty$, and by applying \cref{eq: first upper bound on the difference of pky} into \cref{eq: bound on Q(z)i} it follows that
\[
    \|\mathcal{Q}_\lambda(P^m \mathbf{y} )\|_\infty 
    \leq C \max_{j,k \in V} \left|[P^m \mathbf{y}]_j - [P^m \mathbf{y}]_k\right|\|\mathbf{y}\|_\infty 
    \leq C \, \max_{j,k\in V}|y_j - y_k|\, \|\mathbf{y}\|_\infty\, .
\]
For the second upper bound, note that \cref{prop: linear dynamics convergence} directly implies the following bound:
\begin{equation}\label{eq: second upper bound on the difference of pky}
    \|P^m \mathbf{y} - \bar\mu \mathbf{1} \|_\infty \leq \nu^m \sqrt{\Delta n},
\end{equation}
with $\nu,\Delta,\bar\mu$ as defined in \cref{eq: def mu,eq: def delta,eq: def nu}.
Hence, for every $j,k \in V$ it holds that
\[
    \left|[P^m \mathbf{y}]_j - [P^m \mathbf{y}]_k\right| 
    \leq 2\nu^m \sqrt{\Delta n}
\]
and therefore by using \cref{eq: bound on Q(z)i,eq: first upper bound on the difference of pky,eq: second upper bound on the difference of pky} we get 
\begin{align*}
    \|\mathcal{Q}_\lambda(P^m \mathbf{y})\|_\infty 
   \leq \, C\nu^m\sqrt{\Delta n} \|\mathbf{y}\|_\infty \, .
\end{align*}
This gives us the desired lemma.
\end{proof}

\begin{lemma}\label{lem: bound on the sum of norms}
Let ${\eta}>0$ and $M_\eta$ be as in \cref{def: M}, for $\Delta,n,\nu$ such that $M_\eta$ exists.
Let $\mathbf{y} \in \mathbb{R}^n$ such that $\|\mathbf{y}\|_\infty \le 1$ and $\max_{i,j\in V}|y_i - y_j| < {\eta}$. Let $P,\mathcal{Q}_\lambda$ be as in \cref{eq:nonlinear dynamics matrix apx} for any $\lambda\in (-1/2,1/2)$. Then we have 
\[
    \sum_{m=0}^{\infty} \| \mathcal{Q}_\lambda(P^m \mathbf{y}) \|_\infty \leq 2C\eta M_\eta \|\mathbf{y} \|_\infty,
\]
where $C>2$ is the positive constant appearing in \cref{lem: bounds on norms}.
\end{lemma}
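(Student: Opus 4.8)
The plan is to split the infinite sum at the threshold $m = M_\eta$ and apply the two complementary bounds from \cref{lem: bounds on norms} in the two regimes. For the head, i.e., the terms with $0 \le m < M_\eta$, I would use the first bound, which reads $\|\mathcal{Q}_\lambda(P^m \mathbf{y})\|_\infty \le C\max_{i,j\in V}|y_i - y_j|\,\|\mathbf{y}\|_\infty$. Since the hypothesis gives $\max_{i,j\in V}|y_i - y_j| < \eta$, each of these terms is at most $C\eta\|\mathbf{y}\|_\infty$, so summing the $M_\eta$ of them yields a head contribution of at most $C\eta M_\eta \|\mathbf{y}\|_\infty$.

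For the tail $m \ge M_\eta$ I would instead use the second bound $\|\mathcal{Q}_\lambda(P^m\mathbf{y})\|_\infty \le C\nu^m\sqrt{\Delta n}\,\|\mathbf{y}\|_\infty$ and sum the resulting geometric series (which converges since $\nu < 1$ by \cref{assumptions on Gamma}.2), obtaining
\[
    \sum_{m=M_\eta}^{\infty} \|\mathcal{Q}_\lambda(P^m \mathbf{y})\|_\infty
    \le C\sqrt{\Delta n}\,\|\mathbf{y}\|_\infty \cdot \frac{\nu^{M_\eta}}{1-\nu}.
\]
The definition of $M_\eta$ in \cref{def: M} is precisely engineered so that $\frac{\nu^{M_\eta}}{1-\nu}\sqrt{\Delta n} \le M_\eta \eta$, which converts this tail estimate into $C\eta M_\eta \|\mathbf{y}\|_\infty$. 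Adding the head and tail contributions then gives the claimed $2C\eta M_\eta \|\mathbf{y}\|_\infty$.

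The only step requiring a little care is ensuring that the first bound applies with the \emph{same} $\eta$ across all head terms, rather than a value degrading with $m$. Here I would invoke the contraction estimate \cref{eq: first upper bound on the difference of pky}, which shows that $\max_{i,j\in V}\big|[P^m\mathbf{y}]_i - [P^m\mathbf{y}]_j\big| \le \max_{i,j\in V}|y_i - y_j| < \eta$ uniformly in $m$, since $P$ and hence each $P^m$ is row-stochastic so the spread of the iterate never increases. I do not expect a genuine obstacle: the argument is essentially bookkeeping, with the split point $M_\eta$ chosen so that the slowly-converging factor $\max_{i,j}|y_i-y_j|$ carries the head while exponential decay of $\nu^m$ carries the tail, and both pieces are balanced to the common order $C\eta M_\eta\|\mathbf{y}\|_\infty$. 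One should only note the edge case $M_\eta = 0$, in which the head is empty and the tail bound alone already delivers the estimate.
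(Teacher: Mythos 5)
Your proof is correct and follows essentially the same route as the paper's: split the series at $M_\eta$, bound the head by $C\eta M_\eta\|\mathbf{y}\|_\infty$ using the first estimate of \cref{lem: bounds on norms}, bound the tail geometrically via the second estimate, and invoke the defining inequality of $M_\eta$ in \cref{def: M} to balance the tail against $C\eta M_\eta\|\mathbf{y}\|_\infty$. The only difference is cosmetic: the uniformity in $m$ that you secure via \cref{eq: first upper bound on the difference of pky} is already built into the statement of \cref{lem: bounds on norms} (its first bound is expressed in terms of $\max_{i,j\in V}|y_i-y_j|$ for the original $\mathbf{y}$), and the edge case $M_\eta=0$ cannot occur since $\sqrt{\Delta n}>0$ forces $M_\eta\ge 1$.
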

\begin{proof}
The proof follows from \cref{lem: bounds on norms} together with \cref{assumptions on Gamma}. More precisely, let $M_\eta$ be as in \cref{def: M}. We write
\begin{align*}
\sum_{m=0}^{\infty} \| \mathcal{Q}_\lambda(P^m\mathbf{y}) \|_\infty
&= \sum_{m=0}^{M_\eta-1} \| \mathcal{Q}_\lambda (P^m\mathbf{y}) \|_\infty + \sum_{m=M_\eta}^{\infty} \| \mathcal{Q}_\lambda(P^m\mathbf{y}) \|_\infty 
\\
&\leq C M_\eta \, \max_{i,j\in V}|y_i - y_j|\,\|\mathbf{y} \|_\infty \,  +  C\sum_{m=M_\eta}^{\infty}  \, \nu^m \sqrt{\Delta N} \, \|\mathbf{y}\|_\infty,
\end{align*}
where we used the two bounds on $\|\mathcal{Q}_\lambda(P^m\mathbf{y})\|_\infty$ proved in \cref{lem: bounds on norms} for some $C>2$. Therefore, being $|\nu|<1$ and $\max_{i,j\in V}|y_i - y_j| <\eta$ by assumption, we have
\[
    \sum_{m=0}^{\infty} \| \mathcal{Q}_\lambda (P^m\mathbf{y}) \|_\infty \leq C \Big(\eta M_\eta + \frac{\nu^{M_\eta}}{1-\nu} \sqrt{\Delta N} \Big) \, \|\mathbf{y}\|_\infty.
\]
To conclude the proof it is enough to notice that from the definition of $M_\eta$ (see \cref{def: M}), we have that
$\frac{\nu^M}{1-\nu} \sqrt{\Delta N} \leq \eta M_\eta$, which guarantees that 
\[
     \sum_{m=0}^{\infty} \| \mathcal{Q}_\lambda (P^m\mathbf{y}) \|_\infty\leq 2C \eta M_{\eta} \|\mathbf{y}\|_\infty,
\]
where $C>2$ is the constant appearing in \cref{lem: bounds on norms}.
This concludes the proof.
\end{proof}

By applying \cref{lem: bounds on norms,lem: bound on the sum of norms}, we can now prove \cref{lemma: estimate of R(x)}.
\begin{proof}[Proof of \cref{lemma: estimate of R(x)}]
In this proof, we simplify the notation and we write $\mathcal{R}^{(t)}$ for the operator $\mathcal{R}_\lambda^{(t)}$ defined in \cref{eq:matrix Rt}. 
Recall from \cref{eq:matrix Rt} that
\begin{equation}\label{eq: def Rk}
    \mathcal{R}^{(t)} 
    = (\mathcal{P}+\lambda \mathcal{Q}_\lambda)^t-\mathcal{P}^t
    = \sum_{k=1}^{t} \lambda^k \mathcal{R}_{k}^{(t)},
\end{equation}
where we denote with $\mathcal{R}_{k}^{(t)}$ the sum of all possible composite operators that contain $(t-k)$ times the operator $\mathcal{P}$ and $k$ times the operator $\mathcal{Q}_\lambda$. 

The core of the proof is proving the following bound, for every $k\in \mathbb{N}$:
\begin{equation}\label{eq: bound on Rk x}
    \|\mathcal{R}_{k}^{(t)} (\bstate{1})\|_\infty \leq 2C(4C\eta M_\eta)^{k}\|\bstate{1}\|_\infty,
\end{equation}
where $C>2$ is the positive and finite constant introduced in \cref{lem: bounds on norms}.
This is indeed enough to conclude the proof since it will allow us to bound uniformly with respect to $t$ the sum in the right-hand side of \cref{eq: def Rk}, as it is going to be clear later. 

\medskip
In order to prove \cref{eq: bound on Rk x}, let us start with $k=1$. Note that 
\begin{align}
    \| \mathcal{R}_1^{(t)} (\bstate{1}) \|_\infty
    &=\left\| \sum_{m=0}^{t-1} P^{t-m-1} \mathcal{Q}_\lambda(P^m\bstate{1}) \right\|_\infty \leq \sum_{m=0}^{t-1} \| P^{t-m-1} \mathcal{Q}_\lambda(P^m\bstate{1}) \|_\infty 
    \notag\\
    &\leq \sum_{m=0}^{t-1} \| P^{t-m-1} \|_\infty \| \mathcal{Q}_\lambda(P^m\bstate{1}) \|_\infty \leq \sum_{m=0}^{\infty} \| \mathcal{Q}_\lambda(P^m\bstate{1}) \|_\infty,
    \label{eq: applying lemma to get bound on R1}
\end{align}
where the last inequality follows since $\|P\|_\infty\le 1$.
Note that, since 
$|\state{1}_i - \rho| < \eta$ for every $i$, 
it holds that $\max_{i,j\in V}|\state{1}_i - \state{1}_j|<2\eta$.
Therefore, by applying \cref{lem: bound on the sum of norms} with $\mathbf{y} = \bstate{1}$, we immediately get that there exists a positive and finite constant $C>2$ such that 
\begin{equation}\label{eq: R1t}
    \| \mathcal{R}_1^{(t)} (\bstate{1}) \|_\infty \leq 2C(2\eta) M_{2\eta} \|\bstate{1}\|_\infty \leq 4C\eta M_{\eta} \|\bstate{1}\|_\infty,
\end{equation}
where the last inequality directly follows from the definition of $M_{2\eta}$ and $M_\eta$ (see \cref{def: M}).

\medskip
For $k=2$, we proceed analogously to \cref{eq: applying lemma to get bound on R1} and get:
\begin{align}\label{eq:QPellzm}
\|\mathcal{R}_{2}^{(t)} (\bstate{1})\|_\infty 
&= \left\| \sum_{m=0}^{t-2} \sum_{r=0}^{t-m-2} P^{t-m-r-2} \mathcal{Q}_\lambda \big( P^{r} \mathcal{Q}_\lambda (P^{m}\bstate{1})\big) \right\|_\infty 
\\\notag
&\leq \sum_{m=0}^{\infty} \left(\sum_{r=0}^{\infty} \| \mathcal{Q}_\lambda \big( P^{r} \mathcal{Q}_\lambda (P^{m}\bstate{1})\big) \|_\infty \right).
\end{align}
We want now to apply \cref{lem: bound on the sum of norms} to \cref{eq:QPellzm} for each fixed $m$ with $\mathbf{y} = \mathcal{Q}_\lambda (P^{m}\bstate{1})$. Note that this is possible since, by \cref{lem: bounds on norms} and the fact that $\max_{i,j\in V} |x_i^{(1)} - x_j^{(1)}| \leq 2\eta$ (which follows from the assumption $|\state{1}_i - \rho| < \eta$ for every $i$), it holds that 
\begin{equation}\label{eq:infnorm bound y}
    \|\mathbf{y}\|_\infty=
    \|\mathcal{Q}_\lambda(P^{m}\bstate{1})\|_\infty \le C \max_{i,j\in V}|\state{1}_i - \state{1}_j|\,\|\bstate{1}\|_\infty < 2C\eta
\end{equation}
where the constant $C$ introduced in \cref{lem: bounds on norms} is finite and such that $C>2$ and where we used that $\|\bstate{1}\|_\infty \leq 1$ (see \cref{rem: infty norm x t}).
Therefore, we get 
\begin{equation}\label{eq:difference bound y}
    \max_{i,j\in V} |y_i - y_j | < 4C \eta,
\end{equation}
for some $C>2$.
Now, from \cref{lem: bound on the sum of norms}, with $\mathbf{y} = \mathcal{Q}_\lambda (P^{m}\bstate{1})$ and $\widetilde{\eta} = 4C \eta$, we find that 
\begin{equation}\label{eq: bound sum r}
    \sum_{r=0}^\infty \|\mathcal{Q}_\lambda(P^r\mathbf{y})\| \leq 2C \,\widetilde{\eta} \,M_{\widetilde{\eta}} \,\|\mathbf{y}\|_\infty \leq 8 C^2 \eta M_{\widetilde{\eta}} \|\mathbf{y}\|_\infty.
\end{equation}
Recall that for any $\xi>0$, $M_{\xi}$ is defined as (see \cref{def: M})
\[
    M_{\xi} := \min\left\{
        m \in \mathbb{N} : \frac{\nu^m}{1-\nu} \sqrt{\Delta n} \leq m\, \xi
    \right\}.
\]
Therefore, being $\widetilde{\eta} > \eta$, it holds that $M_{\widetilde{\eta}} \le M_\eta$. Thus, by plugging \cref{eq: bound sum r} into \cref{eq:QPellzm} and using $M_{\widetilde{\eta}} \le M_\eta$, we get that 
\begin{equation}\label{eq: R2 case1}
    \|\mathcal{R}_{2}^{(t)} (\bstate{1}) \|_\infty \leq 8 C^2\,{\eta}\, M_\eta \sum_{m=0}^{\infty} \| \mathcal{Q}_\lambda(P^{m}\bstate{1})\|_\infty.
\end{equation}
Using again \cref{lem: bound on the sum of norms} to bound the series with respect to the parameter $m$, we get 
\[
    \sum_{m=0}^\infty\|\mathcal{Q}_\lambda(P^m \bstate{1}) \|_\infty \leq 4C \eta M_\eta \|\bstate{1}\|_\infty,
\]
where we proceeded as in \cref{eq: R1t}.
Therefore, by combining \cref{eq: R2 case1} with the estimate above, we conclude that  
\begin{equation}\label{eq: est R2}
    \|\mathcal{R}_{2}^{(t)} (\bstate{1})\|_\infty 
    \leq 32 C (C\eta M_\eta)^2
        \|\bstate{1}\|_\infty = 2C (4C \eta M_\eta)^2 \|\bstate{1}\|_\infty
\end{equation}

\medskip
For $k=3$, proceeding as in \cref{eq: applying lemma to get bound on R1}, we get:
\begin{align*}
\|\mathcal{R}_{3}^{(t)} (\bstate{1})\|_\infty
&= \left\| 
\sum_{m=0}^{t-3}
    \sum_{r=0}^{t-m-3}
        \sum_{s=0}^{t-m-r-3} 
        P^{t-m-r-s-3} \mathcal{Q}_\lambda \big( P^s \mathcal{Q}_\lambda (P^r \mathcal{Q}_\lambda (P^m\bstate{1})) \big)
\right\|_\infty
\\
&\leq \sum_{m=0}^{\infty} \left(
    \sum_{r=0}^{\infty} \left(
        \sum_{s=0}^{\infty} 
        \| 
        \mathcal{Q}_\lambda \big( P^s \mathcal{Q}_\lambda (P^r \mathcal{Q}_\lambda (P^m\bstate{1})) \big)
        \|_\infty
    \right)
\right).
\end{align*}
Similarly as for $k=2$,
we define
$\mathbf{y}=\mathcal{Q}_\lambda(P^m\bstate{1})$ and $\mathbf{z}=\mathcal{Q}_\lambda(P^r\textbf{y})$.
We already know the upper bounds on $\mathbf{y}$ from 
\cref{eq:infnorm bound y,eq:difference bound y}. In particular, we know that 
\[
    \|\mathbf{y}\|_\infty < 2C\eta, \qquad \max_{i,j\in V}|y_i - y_j| < 4C\eta,
\]
for some constant $C>2$. In particular, being $\eta = o(1)$, we can suppose that $2C\eta\leq 1$. Therefore, we can use \cref{lem: bounds on norms} to derive a bound for $\mathbf{z}$. More precisely, we have that
\[
    \|\mathbf{z}\|_\infty=
    \|\mathcal{Q}_\lambda(P^m \mathbf{y})\|_\infty \le C \max_{i,j\in V}|y_i - y_j|\,\|\mathbf{y}\|_\infty.
\]
Therefore, using the estimates for $\|\mathbf{y}\|_\infty$ and $\max_{i,j\in V}|y_i - y_j|$, we have 
\begin{equation}\label{eq: z uniform}
    \|\mathbf{z}\|_\infty < 8C^3 \eta^2 \leq \eta,
\end{equation}
where the last inequality follows from the fact that $\eta = o(1)$. We then also have that 
\begin{equation}\label{eq11: max z}
    \max_{i,j\in V}|z_i - z_j| < 2\eta
\end{equation}
Thus, proceeding similarly to above and applying \cref{lem: bound on the sum of norms} three times we get that
\begin{equation}\label{eq: est R3}
    \|\mathcal{R}_{3}^{(t)} (\bstate{1})\|_\infty 
    \leq 2C (4C\eta M_\eta)^3 \|\bstate{1}\|_\infty.   
\end{equation}
Note that in order to get the estimate above it is crucial to use that $\eta = o(1)$, which allows us to have the two estimates in \cref{eq: z uniform,eq11: max z}. If this did not hold, we would have a constant in front of the factor $(4C\eta M_\eta)^3$ in \cref{eq: est R3} different than the corresponding one in \cref{eq: est R2}.

\medskip
Note that the case $k>3$ is syntactically equivalent to the case $k=3$. 
In fact, the new vector $\|\mathbf{w}\| = \mathcal{Q}_\lambda(P^{\ell} \mathbf{z})$, for every fixed $\ell$, is such that $\|\mathbf{w}\|_\infty \leq \eta$ and $\max_{i,j\in V}|w_i - w_j| < 2\eta$, exactly as for $\mathbf{z}$ in \cref{eq: z uniform,eq11: max z}.
We underline once more that it is crucial to use that $\eta = o(1)$ to get the equivalent bound of \cref{eq: z uniform} for $\mathbf{w}$.
In conclusion, for every $k$, we iteratively apply \cref{lem: bound on the sum of norms} $k$ times (as done above for $k=1,2,3$) and get that
\[
    \|\mathcal{R}_{k}^{(t)} (\bstate{1})\|_\infty \leq 2C (4C\eta M_\eta)^k \|\bstate{1}\|_\infty,
\]
where we recall that $C>2$ is positive and finite.

\medskip
We now use the estimate above to conclude the proof. In particular, being $\eta M_\eta = o(1)$ by assumption, we have that $|4C\eta M_\eta| <1$ and being $|\lambda|< 1/2$, we can conclude that
\begin{align*}
    \|\mathcal{R}^{(t)} (\bstate{1})\|_\infty
    &\leq \sum_{k=1}^{t} |\lambda|^k \|\mathcal{R}_{k}^{(t)} (\bstate{1})\|_\infty 
    \leq 2C \sum_{k=1}^t |4C\eta M_\eta \lambda|^k \|\bstate{1}\|_\infty 
    \\
    &\leq 2C \sum_{k=1}^\infty |4C\eta M_\eta\lambda|^k \|\bstate{1}\|_\infty 
    \leq 2C\left(\frac{1}{1- 4C|\lambda|\eta M_\eta} - 1\right)\|\bstate{1}\|_\infty 
    \\
    &= 2C\frac{4C|\lambda|\eta M_\eta}{1-4C|\lambda| \eta M_\eta}\|\bstate{1}\|_\infty.
\end{align*}
Since, by assumption, $|\state{1}_i - \rho| < \eta$ for every $i$, then it holds that $\|\bstate{1}\|_\infty < |\rho| + \eta$.
Hence, we get the desired result, i.e, 
\[
 \|\mathcal{R}^{(t)} (\bstate{1})\|_\infty < \frac{8C^2|\lambda| \eta M_\eta}{1- 4C|\lambda| \eta M_\eta}(|\rho| + \eta).
\]
\end{proof}

\section{On the randomness of the initial state}\label{sec: rademacher initial state}
In this section, we prove some concentration and anti-concentration results that are crucial to proving our main theorem (\cref{theorem: main}). 
In \cref{sec: known results} we recall some well-known theorems that we use in the subsequent section to prove our results.
In \cref{sec: concentration} we prove concentration results for the average $\bar{\mu}$ and for the shift $\sigma_{\lambda}(\bstate{0})$, respectively defined in \cref{eq: def mu,eq: sigmalambda}, which are needed to characterize the convergence of our dynamics. 
Moreover, we provide a concentration for $\bstate{1}$, which is needed to apply the results of \cref{sec: Rt} to $\mathcal{R}_\lambda^{(t)}(\bstate{1})$ and show that it is negligible. 
Last, in \cref{sec: anti-concentration}, we provide an anti-concentration result for $|\bar{\mu}|$, which is needed in the proof of \cref{theorem: main} in the case $p=1/2$.

\subsection{Useful concentration inequalities}\label{sec: known results}
In this section, we collect some well-known results that we are going to use in the following. In the proof of \cref{lemma: concentration sigma}, will use a Chernoff's bound for the concentration of binomial random variables, which we recall below.
\begin{theorem}[Chernoff's bound~\cite{10.2307/2236576}]\label{thm: chernoff bound}
Let $X_1,\ldots,X_n$ be independent Bernoulli random variables, i.e., taking values in $\{0,1\}$. Let $X=\sum_{i=1}^n X_i$ be a binomial random variable and let $\mu = \Ex[X]$. For every $\gamma \in [0,1]$ it holds that
\[
    \Pr\left( |X - \mu| \ge \gamma \mu \right) \le 2e^{-\frac{\gamma^2 \mu }{3}}.
\]
\end{theorem}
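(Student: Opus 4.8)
The plan is to prove the two-sided bound by controlling each tail separately and then combining them with a union bound, which accounts for the factor $2$. For the upper tail $\Pr(X \ge (1+\gamma)\mu)$ I would use the exponential moment (Chernoff) method: for any $t > 0$, Markov's inequality applied to $e^{tX}$ gives $\Pr(X \ge (1+\gamma)\mu) \le e^{-t(1+\gamma)\mu}\,\Ex[e^{tX}]$. Since the $X_i$ are independent, $\Ex[e^{tX}] = \prod_{i=1}^n \Ex[e^{tX_i}]$, and writing $p_i = \Pr(X_i = 1)$ I would bound each factor via $\Ex[e^{tX_i}] = 1 + p_i(e^t - 1) \le \exp\!\big(p_i(e^t - 1)\big)$, using the elementary inequality $1 + x \le e^x$. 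Multiplying over $i$ yields $\Ex[e^{tX}] \le \exp\!\big(\mu(e^t-1)\big)$, since $\mu = \sum_i p_i$.

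Next I would optimize the free parameter $t$. Choosing $t = \ln(1+\gamma)$ gives the classical bound $\Pr(X \ge (1+\gamma)\mu) \le \left(\frac{e^\gamma}{(1+\gamma)^{1+\gamma}}\right)^\mu$. For the lower tail I would argue symmetrically, applying the same moment method with $t < 0$ to obtain $\Pr(X \le (1-\gamma)\mu) \le \left(\frac{e^{-\gamma}}{(1-\gamma)^{1-\gamma}}\right)^\mu$.

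The remaining work, and what I expect to be the main obstacle, is purely analytic: I must show that both tail quantities are dominated by $e^{-\gamma^2\mu/3}$ for $\gamma \in [0,1]$. Taking logarithms, this reduces to the two one-variable inequalities $\gamma - (1+\gamma)\ln(1+\gamma) \le -\gamma^2/3$ and $-\gamma - (1-\gamma)\ln(1-\gamma) \le -\gamma^2/3$ on $[0,1]$. I would verify each by setting $f(\gamma)$ equal to the difference of the two sides, checking $f(0) = 0$, and showing via the sign of $f'$ that $f$ stays nonpositive throughout the interval; the lower tail is in fact easier and gives the stronger bound $e^{-\gamma^2\mu/2}$. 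Combining the two tail estimates, each at most $e^{-\gamma^2\mu/3}$, through the union bound $\Pr(|X - \mu| \ge \gamma\mu) \le \Pr(X \ge (1+\gamma)\mu) + \Pr(X \le (1-\gamma)\mu)$ produces the stated factor $2$ and completes the proof.
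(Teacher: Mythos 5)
Your proposal is correct, but there is nothing in the paper to compare it against: the paper states this result as a known theorem, citing Chernoff's original 1952 article, and gives no proof of its own. Your argument is the standard exponential-moment (Chernoff) method found in standard references: Markov's inequality applied to $e^{tX}$, factorization of the moment generating function by independence, the bound $1+x \le e^x$ giving $\Ex[e^{tX}] \le \exp(\mu(e^t-1))$ for heterogeneous success probabilities $p_i$, optimization at $t = \ln(1+\gamma)$ (resp.\ $t = \ln(1-\gamma)$), the two calculus inequalities $\gamma - (1+\gamma)\ln(1+\gamma) \le -\gamma^2/3$ and $-\gamma - (1-\gamma)\ln(1-\gamma) \le -\gamma^2/2$ on $[0,1]$, and a union bound producing the factor $2$. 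All steps are sound, and it is worth noting that your treatment correctly covers the non-identically-distributed case, which the paper implicitly needs (the statement only assumes independent Bernoulli variables, even though it loosely calls $X$ ``binomial''). Two trivial boundary cases deserve a word in a careful write-up: $\gamma = 0$, where the claim is vacuous, and $\gamma = 1$ in the lower tail, where $t = \ln(1-\gamma)$ is undefined and one should either invoke the convention $0^0 = 1$, take a limit, or bound $\Pr(X \le 0) = \prod_i (1-p_i) \le e^{-\mu}$ directly.
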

\noindent
The following theorem instead, Hoeffding's inequality, is a concentration result for the sum of general independent random variables and is used in the proof of \cref{lemma:bound on mu}.
\begin{theorem}[Hoeffding's inequality~\cite{10.2307/2282952}]\label{thm:hoeffding} 
Let $X_{1}, \ldots, X_{n}$ be independent random variables such that $X_{i} \in [L_i, U_i]$. 
Let $X = \sum_{i=1}^n X_i$ and let $\mu = \Ex[X]$. 
Then, for all $\gamma>0$,
\[
    \Pr(| X - \mu | \geq \gamma) 
    \leq 2\exp\left(-\frac{2 \gamma^{2}}{\sum_{i=1}^{n} (U_i - L_i)^2}\right).
\]
\end{theorem}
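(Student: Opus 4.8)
The plan is to use the standard Chernoff (exponential moment) method together with Hoeffding's lemma. First I would center each variable by setting $Y_i := X_i - \Ex[X_i]$, so that $\Ex[Y_i]=0$ and $Y_i$ lies in an interval of length $U_i - L_i$. For any $s>0$, Markov's inequality applied to the nonnegative random variable $e^{s(X-\mu)}$ gives
\[
    \Pr(X - \mu \ge \gamma) = \Pr\big(e^{s(X-\mu)} \ge e^{s\gamma}\big) \le e^{-s\gamma}\,\Ex\big[e^{s(X-\mu)}\big],
\]
and by independence the moment generating function factorizes as $\Ex[e^{s(X-\mu)}] = \prod_{i=1}^n \Ex[e^{sY_i}]$.

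The key step is Hoeffding's lemma: for a mean-zero random variable $Y$ supported on $[a,b]$ one has $\Ex[e^{sY}] \le e^{s^2(b-a)^2/8}$. I would prove it by convexity of $t\mapsto e^{st}$, which yields the pointwise bound $e^{sy} \le \tfrac{b-y}{b-a}e^{sa} + \tfrac{y-a}{b-a}e^{sb}$ for $y\in[a,b]$; taking expectations and using $\Ex[Y]=0$ gives $\Ex[e^{sY}] \le \tfrac{b}{b-a}e^{sa} - \tfrac{a}{b-a}e^{sb} =: e^{\phi(s)}$. A direct computation shows $\phi(0)=\phi'(0)=0$ and $\phi''(s) \le (b-a)^2/4$ uniformly in $s$ (the second derivative is the variance of a Bernoulli-type law, hence at most a quarter of the squared range), so Taylor's theorem with remainder gives $\phi(s) \le s^2(b-a)^2/8$.

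Combining the two ingredients, with $A := \sum_{i=1}^n (U_i - L_i)^2$, I obtain $\Pr(X-\mu \ge \gamma) \le \exp\!\big(-s\gamma + s^2 A/8\big)$ for every $s>0$. Optimizing the exponent over $s$ (the minimum is attained at $s = 4\gamma/A$) yields $\Pr(X-\mu \ge \gamma) \le e^{-2\gamma^2/A}$. Applying the identical argument to $-X$ bounds the lower tail $\Pr(X - \mu \le -\gamma)$ by the same quantity, and a union bound over the two tails produces the factor $2$ in the claimed inequality. The main technical obstacle is Hoeffding's lemma, specifically the uniform bound $\phi''(s)\le (b-a)^2/4$; everything else is routine bookkeeping in the Chernoff method followed by a one-line optimization.
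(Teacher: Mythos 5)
Your proof is correct: it is the classical Chernoff--Hoeffding argument (exponential Markov bound, factorization by independence, Hoeffding's lemma via convexity and the bound $\phi''(s)\le (b-a)^2/4$, then optimization at $s=4\gamma/A$ and a union bound for the two tails). Note, however, that the paper does not prove this statement at all --- it imports \cref{thm:hoeffding} as a known result with a citation to Hoeffding's original paper --- so there is no internal proof to compare against; your write-up is simply the standard proof of that cited theorem, and it is sound.
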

In \cref{sec: anti-concentration} we will use the Berry-Esseen theorem, a quantitative version of the central limit theorem that, under certain conditions, allows us to approximate a distribution with the standard normal distribution.
Before stating it rigorously, recall that the \textit{cumulative distribution function} $F_X$ of a random variable $X$ evaluated at $x$ is given by $F_X(x) := \Pr(X \le x)$.
For brevity, in what follows, we write cumulative distribution function as CDF.
\begin{theorem}[Berry–Esseen theorem~\cite{esseen1942liapunov,shevtsova2010improvement}]\label{thm:Berry-Esseen}
Let $X_1,\ldots,X_n$ be independent random variables such that $\Ex[X_i] = 0$, $\Ex[X_i^2] = \sigma_i^2 > 0$, and $\Ex[|X_i|^3] = \rho_i < \infty$, for every $i\in\{1,\ldots,n\}$. Also, let
\[
    S_{n} := \frac{\sum_{i=1}^{n}X_{i}}{\sqrt {\sum_{i=1}^{n}\sigma_{i}^{2}}}.
\]
Let us denote by $F_n$ the CDF of $S_n$, and by $\Phi$ the CDF of the standard normal distribution. 
There exists an absolute constant $C_0 \in (0.4097,0.56)$ such that, for every $n$,
\[
    \sup_{x\in \mathbb {R} }\left|F_{n}(x)-\Phi (x)\right|\leq C_0 \psi_0,
\qquad
    \psi_0 := \left( \sum_{i=1}^{n} \sigma_{i}^{2} \right)^{-3/2}
        \cdot \sum_{i=1}^{n} \rho_{i}.
\]
\end{theorem}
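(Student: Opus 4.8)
The plan is to prove \cref{thm:Berry-Esseen} by the classical Fourier-analytic route, whose two pillars are Esseen's smoothing inequality and a Taylor estimate on characteristic functions. Throughout, write $B_n := (\sum_{i=1}^n \sigma_i^2)^{1/2}$, let $\varphi_i$ be the characteristic function of $X_i$, and let $\varphi(t) := \Ex[e^{\mathrm{i} t S_n}] = \prod_{i=1}^n \varphi_i(t/B_n)$ be the characteristic function of the normalized sum $S_n$, whose CDF is $F_n$. Since $\sum_i \sigma_i^2 / B_n^2 = 1$, the target is to compare $\varphi$ with $e^{-t^2/2}$, the characteristic function of $\Phi$.

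First I would invoke Esseen's smoothing lemma: for every $T>0$,
\[
    \sup_{x\in\mathbb{R}} |F_n(x) - \Phi(x)| \le \frac{1}{\pi} \int_{-T}^{T} \left| \frac{\varphi(t) - e^{-t^2/2}}{t} \right| dt + \frac{24}{\pi \sqrt{2\pi}\, T},
\]
which reduces pointwise control of the CDF difference to an integral bound on the characteristic-function difference over a finite window $[-T,T]$, plus a tail term to be killed later by a suitable choice of $T$. The lemma itself is proved by convolving $F_n - \Phi$ against a smoothing kernel (e.g.\ the Fej\'er kernel) whose Fourier transform is supported in $[-T,T]$ and then applying the inversion formula; I would treat it as a standalone analytic fact, since the factor $24/\sqrt{2\pi}$ uses only $\sup_x |\Phi'(x)| = 1/\sqrt{2\pi}$.

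Second, I would bound the integrand. The moment hypotheses $\Ex[X_i]=0$, $\Ex[X_i^2]=\sigma_i^2$, $\Ex[|X_i|^3]=\rho_i$ give, via $|e^{\mathrm{i} u x} - (1 + \mathrm{i} u x - u^2 x^2/2)| \le |u x|^3/6$ and taking expectations,
\[
    \left| \varphi_i(u) - \Big(1 - \tfrac{\sigma_i^2 u^2}{2}\Big) \right| \le \frac{\rho_i |u|^3}{6}.
\]
Combining this per-factor estimate with the elementary telescoping inequality $|\prod_i a_i - \prod_i b_i| \le \sum_i |a_i - b_i|$ (valid when all factors have modulus at most $1$) and with $\prod_i e^{-\sigma_i^2 t^2/(2B_n^2)} = e^{-t^2/2}$, I would obtain, on the range $|t| \le c/\psi_0$, a bound of the shape
\[
    |\varphi(t) - e^{-t^2/2}| \le C\, \psi_0\, |t|^3\, e^{-t^2/3},
\]
where $\psi_0 = B_n^{-3}\sum_i \rho_i$ is exactly the Lyapunov ratio in the statement. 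Dividing by $|t|$ and integrating against the Gaussian factor produces a convergent integral, so the first term in the smoothing bound is $O(\psi_0)$.

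Finally, I would set $T := c/\psi_0$: this choice makes the tail term $24/(\pi\sqrt{2\pi}\,T)$ also $O(\psi_0)$ and keeps the integration window inside the region where the Taylor estimate is valid, yielding $\sup_x |F_n(x)-\Phi(x)| \le C_0 \psi_0$ for an absolute constant $C_0$. The genuinely hard part is not the $O(\psi_0)$ rate but pinning down the explicit admissible constant $C_0 \in (0.4097, 0.56)$: the lower endpoint reflects Esseen's asymptotically extremal example, while the sharp upper value $0.56$ cited from~\cite{shevtsova2010improvement} requires a substantially refined smoothing inequality and a careful non-asymptotic optimization of the split between the integral and tail terms, well beyond the crude estimates above. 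I would regard optimizing these constants as the main obstacle and, for the purposes of this paper, simply quote the sharp value.
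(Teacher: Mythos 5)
This theorem is not proved in the paper at all: it is imported as a classical black-box result, stated with citations to \cite{esseen1942liapunov,shevtsova2010improvement} and used only as an ingredient (via \cref{prop:folded Berry-Essen}) in the anti-concentration argument for $\bar\mu$. Your sketch follows the standard Fourier-analytic route --- Esseen's smoothing inequality, the per-factor Taylor bound $|\varphi_i(u)-(1-\sigma_i^2u^2/2)|\le \rho_i|u|^3/6$, a product-comparison step, and the choice $T\asymp 1/\psi_0$ --- and this is indeed the correct skeleton for obtaining $\sup_x|F_n(x)-\Phi(x)|\le C\psi_0$ with \emph{some} absolute constant $C$. Two technical points in your middle step deserve flagging: (i) the telescoping inequality compares $\varphi_i(t/B_n)$ with $e^{-\sigma_i^2t^2/(2B_n^2)}$, so besides the third-moment Taylor bound you also need $|e^{-\sigma_i^2u^2/2}-(1-\sigma_i^2u^2/2)|\le \sigma_i^4u^4/8$ together with Lyapunov's inequality $\sigma_i^3\le\rho_i$ to absorb the fourth-moment term into $\psi_0$; and (ii) the Gaussian decay factor $e^{-t^2/3}$ in your claimed bound does not come for free from the telescoping estimate --- it requires a separate lower-bound argument showing $|\varphi_i(u)|\le e^{-\sigma_i^2u^2/3}$ (or similar) on the restricted range $|t|\le c/\psi_0$, which is where the range restriction is actually used. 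With those repairs the argument is the textbook proof. However, as you yourself note, this route cannot produce the constant $C_0\in(0.4097,0.56)$ appearing in the statement: crude smoothing gives constants an order of magnitude larger, the lower endpoint comes from Esseen's extremal two-point example, and the upper value $0.56$ is Shevtsova's refined optimization. Since the statement as written includes that constant, your proof ultimately rests on the same citation the paper uses; in that sense your proposal and the paper end in the same place, with yours adding a correct (modulo the two details above) but not self-sufficient derivation of the qualitative $O(\psi_0)$ rate.
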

\subsection{Concentration results}\label{sec: concentration}
In this section we use \cref{thm: chernoff bound} and \cref{thm:hoeffding} to prove the concentration of $\sigma_\lambda(\bstate{0})$ (see \cref{eq: sigmalambda}), of $\bstate{1}$, and of $\bar{\mu}$ around their expected values.

\subsubsection{Concentration of the shift \texorpdfstring{$\sigma_{\lambda}(\bstate{0})$}{}}
We start by recalling that (see \cref{eq: sigmalambda})
\[
    \sigma_\lambda(\bstate{0}) = \frac{\sum_{i \in V} \left( \sum_{j \in V} W_{ij}\,\sigma_{\lambda;j}(\bstate{0})\right) \state{0}_i }{\sum_{i \in V} D_{ii}\,\state{0}_i},
\]
with
\[
    \sigma_{\lambda;i}(\mathbf{x}^{(0)}) := \frac{c_i(1 - {s(2\lambda)})}{|N_i| - c_i (1- {s(2\lambda)})},  
    \quad\text{and } 
    c_i := |\{\{j,k\} \in N_i\, : \; x_j^{(0)}\neq x_k^{(0)}\}.
\] 
Note that whenever the initial state is random, $c_i$ is a binomial random variable. Hence, the concentration result we want to prove for $\sigma_{\lambda}(\bstate{0})$ comes as a consequence of \cref{thm: chernoff bound}.
\begin{proposition}[Concentration of $\sigma_\lambda(\bstate{0})$]\label{lemma: concentration sigma}
Let $\lambda\in \left(-\frac{1}{2},\frac{1}{2}\right)$. Let $\bstate{0}$ be $p$-Rademacher random vector according to \cref{def: p rademacher vector}  for $p\in [0,1]$ and let $\sigma_{\lambda;i}(\bstate{0})$, for every $i\in V$, as in \cref{eq: def mui sigmai}. 
If $|\nbr{i}| = \omega(\ln n)$, for every $i \in V$, it holds that
\begin{equation}\label{eq: conc sigmai}
    \Pr\left(
        \forall i\in V,\,\, \sigma_{\lambda;i}(\bstate{0}) = \frac{2p(1-p)(1-{s(2\lambda)})}{1-2p(1-p)(1-{s(2\lambda)})} \cdot (1\pm \gamma)
    \right) = 1-n^{-\omega(1)},
\end{equation}
for any  $\gamma\in[0,1]$, $\gamma= o(1)$ and  such that
$\gamma = \omega(\sqrt{\log (n)/d_{\min}})$, where  $d_{\min} := \min_{i\in V} |\nbr{i}|$.

As a consequence, with the same choice of $\gamma$, we have
\begin{equation}\label{eq: conc sigma}
    \Pr\left(
        \sigma_{\lambda}(\bstate{0}) = \frac{2p(1-p)(1-{s(2\lambda)})}{1-2p(1-p)(1-{s(2\lambda)})} \cdot (1\pm \gamma)
    \right) = 1 - n^{-\omega(1)},
\end{equation}
where $\sigma_\lambda(\bstate{0})$ is as defined in \cref{eq: sigmalambda}. 
\end{proposition}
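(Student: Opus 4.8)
The plan is to prove the per-\node bound \cref{eq: conc sigmai} first and then obtain the aggregate bound \cref{eq: conc sigma} as a corollary. Throughout, write $S$ for the claimed limiting value $\frac{2p(1-p)(1-s(2\lambda))}{1-2p(1-p)(1-s(2\lambda))}$ and $\beta:=1-s(2\lambda)$. The whole argument rests on a single observation: for a $p$-Rademacher initial state (\cref{def: p rademacher vector}), the count $c_i$ from \cref{eq: def ci} is a sum of $|\nbr{i}|$ indicator variables, one per pair $\{j,k\}\in\nbr{i}$, each equal to $1$ exactly when $\state{0}_j\neq\state{0}_k$, an event of probability $2p(1-p)$; hence $\Ex[c_i]=2p(1-p)\,|\nbr{i}|$, and plugging this mean into the definition of $\sigma_{\lambda;i}$ in \cref{eq: def mui sigmai} returns precisely $S$.

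For \cref{eq: conc sigmai} I would fix $i$ and apply Chernoff's bound (\cref{thm: chernoff bound}) to $c_i$, giving $|c_i-\Ex[c_i]|\le\gamma\,\Ex[c_i]$ outside an event of probability $2e^{-\gamma^2\Ex[c_i]/3}$. Because $|\nbr{i}|=\omega(\log n)$ and $\gamma=\omega(\sqrt{\log n/d_{\min}})$, we have $\gamma^2\Ex[c_i]=\omega(\log n)$, so this failure probability is $n^{-\omega(1)}$. It then remains to move from a relative bound on $c_i$ to one on $\sigma_{\lambda;i}$. Writing $\sigma_{\lambda;i}=g(c_i)$ with $g(c)=\frac{\beta c}{|\nbr{i}|-\beta c}$, one checks $g(\Ex[c_i])=S$ and, for $c=\Ex[c_i](1+\epsilon)$, that $g(c)=S\cdot\frac{1+\epsilon}{1-S\epsilon}$; since $S$ is a fixed constant independent of $n$ and $\epsilon=\pm\gamma=o(1)$, this equals $S(1\pm O(\gamma))$, the constant being absorbable into the choice of $\gamma$. (The denominator $|\nbr{i}|-\beta c_i$ is the normalization $\snorm{0}_i$ from \cref{lemma: value of xi(1)}, hence strictly positive, so $g$ is well defined along the way.) A union bound over the $n$ \nodes keeps the failure probability at $n^{-\omega(1)}$, which is \cref{eq: conc sigmai}; the degenerate cases $p\in\{0,1\}$ are immediate, since then $c_i\equiv0$ and $\sigma_{\lambda;i}\equiv0=S$.

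For \cref{eq: conc sigma} I would recast the shift as a weighted average of the per-\node shifts. Using that $W$ is symmetric (\cref{eq:graph W}) and $\sum_{j}W_{ij}=D_{ii}$ (\cref{eq:graph D}), set $u_j:=(W\bstate{0})_j$ and observe $\sum_j u_j=\sum_i D_{ii}\state{0}_i$, so that \cref{eq: sigmalambda} rewrites as $\sigma_\lambda(\bstate{0})=\frac{\sum_j \sigma_{\lambda;j}(\bstate{0})\,u_j}{\sum_j u_j}$. On the good event of \cref{eq: conc sigmai} we substitute $\sigma_{\lambda;j}=S(1+\epsilon_j)$ with $|\epsilon_j|\le\gamma$, obtaining $\sigma_\lambda(\bstate{0})=S\bigl(1+\frac{\sum_j\epsilon_j u_j}{\sum_j u_j}\bigr)$ with $\bigl|\sum_j\epsilon_j u_j\bigr|\le\gamma\sum_j|u_j|\le\gamma\sum_j D_{jj}$, the last step because $|u_j|\le\sum_i W_{ij}=D_{jj}$. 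Since $\sum_j u_j=\bar\mu\sum_j D_{jj}$ with $\bar\mu$ as in \cref{eq: def mu}, the relative error is at most $\gamma/|\bar\mu|$.

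The main obstacle is exactly this final division. The weights $u_j$ are \emph{signed}, so the weighted average can leave the range $S(1\pm\gamma)$ of its entries, and the error is controlled only once $|\bar\mu|$ is bounded below. For $p\neq1/2$ this is harmless: $\bar\mu$ concentrates around $2p-1=\Omega(1)$ (\cref{lemma:bound on mu}), so $\gamma/|\bar\mu|=O(\gamma)$ and \cref{eq: conc sigma} holds with the stated $\gamma$. For $p=1/2$ the denominator $\sum_i D_{ii}\state{0}_i$ concentrates at $0$, and the bound must be paired with the anti-concentration lower bound on $|\bar\mu|$ from \cref{sec: anti-concentration} (\cref{thm:lower bound mu}); I would flag this as the point where the $p=1/2$ case genuinely diverges from the rest. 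A secondary caveat is that the indicators summed in $c_i$ are not jointly independent when two pairs in $\nbr{i}$ share a \node, so $c_i$ is not literally binomial; since only its mean enters \cref{thm: chernoff bound}, one either reads Chernoff in this averaged form or, to be fully rigorous, replaces it by a bounded-differences estimate whose increments are governed by the co-degrees of $i$.
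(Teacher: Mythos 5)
Your proof of \cref{eq: conc sigmai} follows the same route as the paper's: write $c_i$ as a sum of pair indicators with mean $2p(1-p)|\nbr{i}|$, apply \cref{thm: chernoff bound} with the stated $\gamma$, take a union bound over the $n$ \nodes, and transfer the relative error from $c_i$ to $\sigma_{\lambda;i}(\bstate{0})$. You are in fact more careful than the paper at two points. First, the transfer step: the paper merely asserts that $c_i=2p(1-p)|\nbr{i}|(1\pm\gamma)$ yields the claimed range for $\sigma_{\lambda;i}(\bstate{0})$, while your identity $g\bigl(\Ex[c_i](1+\epsilon)\bigr)=S\cdot\frac{1+\epsilon}{1-S\epsilon}$ (in your notation) makes the constant-factor distortion explicit and justifies absorbing it into $\gamma$. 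Second, your ``secondary caveat'' is not pedantry but a genuine flaw in the paper: the paper calls $c_i$ ``a binomial random variable,'' yet the indicators $\chi\{\state{0}_j\neq\state{0}_k\}$ over pairs sharing a \node are not independent (for $p\neq 1/2$ they are not even pairwise independent), so \cref{thm: chernoff bound} does not apply as stated. The issue is substantive: if every hyperedge containing a \node $i$ also contains one fixed \node $j$, then conditionally on $\state{0}_j$ the ratio $c_i/|\nbr{i}|$ concentrates near $p$ or $1-p$, not near $2p(1-p)$, so the per-\node claim can actually fail under the proposition's hypotheses alone ($|\nbr{i}|=\omega(\log n)$ says nothing about co-degrees). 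Your bounded-differences repair is the right instinct, but, as you suspect, it needs a co-degree assumption that neither the statement nor the paper supplies.

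On \cref{eq: conc sigma} your route genuinely diverges from the paper's, and yours is the sound one. The paper's entire argument is one sentence: if all $\sigma_{\lambda;i}(\bstate{0})$ took a common value then $\sigma_{\lambda}(\bstate{0})$ would equal it, hence per-\node concentration transfers to the aggregate. As you observe, this inference is invalid because the weights in \cref{eq: sigmalambda} are signed: writing $\sigma_\lambda(\bstate{0})=\sum_j\sigma_{\lambda;j}(\bstate{0})\,u_j\,/\sum_j u_j$ with $u_j=(W\bstate{0})_j$, the ratio need not lie in the interval spanned by its entries. Your corrected estimate, relative error at most $\gamma\sum_j|u_j|/\,\lvert\sum_j u_j\rvert\le\gamma/|\bar\mu|$, recovers the statement for $p\neq 1/2$ via \cref{lemma:bound on mu} (after a constant rescaling of $\gamma$, legitimate since the statement quantifies over all admissible $\gamma$), but for $p=1/2$ it yields only error $\gamma/\delta$ with $\delta$ the anti-concentration scale of \cref{thm:lower bound mu}, which is far larger than the stated $\gamma$. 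That shortfall is not a defect of your proposal: it marks the precise point where the paper's own proof has a gap, since the paper performs its one-line deduction with no lower bound on $|\bar\mu|$ in hand and the main theorem then invokes \cref{eq: conc sigma} for $p=1/2$ in exactly this unconditioned form. In short, your argument is correct wherever the paper's is, and where it falls short of the stated bound you have correctly diagnosed why the statement, as written, is not actually proven.
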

\begin{proof}
We start by proving the bound in \cref{eq: conc sigmai}. Recall that for $\lambda\in \left(-{1}/{2},{1}/{2}\right)$ and for every $i\in V$,
\(
    c_i := \big| \{ \{j,k\} \in \nbr{i} : \state{0}_j \neq \state{0}_k \} \big|.
\)
Denoting by $\mathbf{\chi}\{\mathcal{F}\}$ the indicator function of the event $\mathcal{F}$, we can rewrite $c_i$ as
\[
    c_i = \sum_{\{j,k\}\in\nbr{i}} \mathbf{\chi}\{\state{0}_j \neq \state{0}_k\}.
\]
In particular, $c_i$ is a binomial random variable with a number of trials $|\nbr{i}|$ and a probability of success $2p(1-p)$. 
Therefore, we have that
\[  
    \Ex[c_i] = 2p(1-p)|\nbr{i}|.
\]
From \cref{thm: chernoff bound}, for every $\gamma \in [0,1]$, we get that
\[
    \Pr\left( \left| c_i - \Ex[c_i] \right| > \gamma\Ex[c_i] 
    \right) 
    \leq 2e^{-\frac{\gamma^2}{3} \Ex[c_i]} .
\]
Using now that $d_{\min} := \min_{i \in V} |\nbr{i}| = \omega(\ln n)$ by assumption, it follows that $\Ex[c_i] = \omega(\ln n)$ since $p$ is constant. 
By choosing $\gamma\in[0,1]$ such that
$\gamma = \omega\big(\sqrt{\log(n)/d_{\min}}\big)$ and $\gamma = o(1)$,  we find that 
\[
    \Pr\left( \left| c_i - \Ex[c_i] \right| > \gamma\Ex[c_i] 
    \right) 
    \leq n^{-\omega(1)}.
\]
By taking a union bound over all \nodes $i\in V$, it follows that
\[
    \Pr\left( \exists i\in V : \left| c_i - \Ex[c_i] \right| > \gamma\Ex[c_i] 
    \right) 
    \leq \sum_{i\in V} \Pr\left( \left| c_i - \Ex[c_i] \right| > \gamma\Ex[c_i] 
    \right)
    = n^{-\omega(1)}.
\]
Being $\Ex[c_i] = 2p(1-p)|\nbr{i}|$, we deduce that $c_i = 2p(1-p)|\nbr{i}|(1\pm \gamma)$ with high probability.
By using the previous probabilistic bounds on $c_i$ and the definition of $\sigma_{\lambda;i}(\bstate{0})$, we conclude that
\begin{equation}\label{eq: concentration sigmai}
    \Pr\left(
        \forall i\in V,\,\, \sigma_{\lambda;i}(\bstate{0}) = \frac{2p(1-p)(1-{s(2\lambda)})}{1-2p(1-p)(1-{s(2\lambda)})} \cdot (1\pm \gamma)
    \right) = 1-n^{-\omega(1)}.
\end{equation}

Recall the definition of $\sigma_\lambda(\bstate{0})$ given in \cref{eq: sigmalambda}, namely
\[
    \sigma_\lambda(\bstate{0}) := \frac{\sum_{i \in V} \left( \sum_{j \in V} W_{ij}\,\sigma_{\lambda;j}(\bstate{0})\right) \state{0}_i }{\sum_{i \in V} D_{ii}\,\state{0}_i}.
\]
Note that if $\sigma_{\lambda;i}(\bstate{0})$ assumes the same value for every $i \in V$, then $\sigma_\lambda(\bstate{0})$ equals such a value. 
From \cref{eq: concentration sigmai} we know that the value of $\sigma_{\lambda;i}(\bstate{0})$ is concentrated around $\frac{2p(1-p)(1-{s(2\lambda)})}{1-2p(1-p)(1-{s(2\lambda)})}$, for every $i\in V$, with high probability. 
Then, the same holds for $\sigma_\lambda(\bstate{0})$, i.e., 
\[
    \Pr\left(
        \sigma_{\lambda}(\bstate{0}) = \frac{2p(1-p)(1-{s(2\lambda)})}{1-2p(1-p)(1-{s(2\lambda)})} \cdot (1\pm \gamma)
    \right) = 1 - n^{\omega(1)}.
\]
\end{proof}

\subsubsection{Concentration of the average \texorpdfstring{$\bar\mu$}{}}
We now discuss a concentration result of the average $\bar{\mu}$, defined in \cref{eq: def mu}, which is a consequence of \cref{thm:hoeffding}.

\begin{proposition}[Concentration of $\bar\mu$]\label{lemma:bound on mu}
Let $\bstate{0}$ be a $p$-Rademacher random vector. 
For every fixed $i \in V$, let $\bar\mu_i$ be as defined in \cref{eq: def mui sigmai} and let $\mu = \Ex[\bar\mu_i]=2p-1$.
Then, for $\gamma\geq 0$, we have
\begin{equation}\label{eq: conc mui}
\Pr\left[
    \left| \bar\mu_i - \mu \right|<2\gamma
    \right]
    \geq
    1-2 \exp\left(-2 \gamma^2 \Big(\sum_{j\in V} \left({W_{ij}}/{D_{ii}}\right)^2\Big)^{-1}\right).
\end{equation}
Moreover, let $\bar{\mu}$ be as defined in \cref{eq: def mu}.
Then, for $\gamma\geq 0$, we have
\begin{equation}\label{eq: conc mu}
\Pr\left[
    \left| \bar\mu - \mu \right|<2\gamma
    \right]
    \geq
    1-2\exp\left(-\frac{\gamma^2}{2} \left(\sum_{i\in V} D_{ii}^2 \,/\, \Big( \sum_{i\in V} D_{ii}\Big)^2 \right)^{-1}
    \right).
\end{equation}
\end{proposition}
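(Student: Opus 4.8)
The plan is to obtain both concentration bounds as direct applications of Hoeffding's inequality (\cref{thm:hoeffding}), exploiting the fact that each of $\bar\mu_i$ and $\bar\mu$ is a fixed affine combination of the independent bounded variables $\state{0}_1,\dots,\state{0}_n$, whose weights are nonnegative and sum to $1$.

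For \cref{eq: conc mui}, I would fix $i\in V$ and write $\bar\mu_i = \sum_{j\in V} X_j$ with $X_j := (W_{ij}/D_{ii})\,\state{0}_j$. The $X_j$ are independent because the entries of the $p$-Rademacher vector are independent, and since $\state{0}_j\in\{-1,+1\}$ and $W_{ij}/D_{ii}\ge 0$ we have $X_j\in[-W_{ij}/D_{ii},\,W_{ij}/D_{ii}]$, so that $U_j-L_j = 2W_{ij}/D_{ii}$. The mean is $\mu = \Ex[\bar\mu_i] = (2p-1)\sum_{j\in V} W_{ij}/D_{ii} = 2p-1$, where I use the row-sum identity $\sum_{j\in V} W_{ij}=D_{ii}$ from \cref{eq:graph D}. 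Applying \cref{thm:hoeffding} with deviation $2\gamma$ and with $\sum_{j\in V} (U_j-L_j)^2 = 4\sum_{j\in V} (W_{ij}/D_{ii})^2$ produces exactly the exponent $-2\gamma^2\big(\sum_{j} (W_{ij}/D_{ii})^2\big)^{-1}$ of \cref{eq: conc mui}.

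For \cref{eq: conc mu} the argument is identical with different weights: I would write $\bar\mu = \sum_{i\in V} Y_i$ with $Y_i := (D_{ii}/\sum_{k} D_{kk})\,\state{0}_i$, so that $U_i-L_i = 2D_{ii}/\sum_k D_{kk}$, and again $\Ex[\bar\mu]=(2p-1)\sum_i D_{ii}/\sum_k D_{kk}=2p-1=\mu$. Then $\sum_{i\in V}(U_i-L_i)^2 = 4\sum_{i\in V} D_{ii}^2/(\sum_k D_{kk})^2$, and Hoeffding with deviation $2\gamma$ yields a tail bound of the claimed form; in fact it gives the exponent $-2\gamma^2\big(\sum_i D_{ii}^2/(\sum_i D_{ii})^2\big)^{-1}$, which is stronger than (and hence implies) the stated $-\tfrac{\gamma^2}{2}(\cdots)^{-1}$.

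There is no genuine obstacle here beyond careful bookkeeping, so I would flag just two points that need care. First, verifying that $\Ex[\bar\mu_i]$ and $\Ex[\bar\mu]$ both equal $2p-1$, which hinges on $\sum_{j} W_{ij}=D_{ii}$ and on $\sum_i D_{ii}/\sum_k D_{kk}=1$. Second, correctly tracking the factor $2$ in each range $U-L$ together with the substitution of $2\gamma$ for the deviation: the factor $4=2^2$ appearing in $\sum(U-L)^2$ must cancel against $(2\gamma)^2$ to produce the stated exponents, and I would recheck the numerical constant in \cref{eq: conc mu} against this computation to confirm it is consistent with (indeed dominated by) the bound Hoeffding actually delivers.
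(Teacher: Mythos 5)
Your proof is correct and follows essentially the same route as the paper: the paper also proves both bounds by applying Hoeffding's inequality (\cref{thm:hoeffding}) to these exact weighted decompositions, merely rewriting $\state{0}_j$ first as Bernoulli variables $Z_j = (\state{0}_j+1)/2$ with ranges $[0,\,W_{ij}/D_{ii}]$ (respectively $[0,\,D_{ii}/\sum_k D_{kk}]$), which is equivalent bookkeeping to your direct treatment of the $\pm 1$ variables. Your observation that the computation actually yields the stronger exponent $-2\gamma^2(\cdots)^{-1}$ in \cref{eq: conc mu}, which dominates the stated $-\tfrac{\gamma^2}{2}(\cdots)^{-1}$, is also accurate; the paper states the weaker constant even though its own argument delivers the same improvement.
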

\begin{proof}
We start by proving \cref{eq: conc mui}. For every \nodes $i,j\in V$ let us define
\begin{equation}\label{eq:defns of aj Zj Sn}
    a_{ij}:=\frac{W_{ij}}{D_{ii}},
    \quad
    Z_{j} := \frac{\state{0}_j + 1}{2},
    \quad\text{and }
    X_i := \sum_{j\in V} a_{ij}Z_{j}.
\end{equation}
Note that, since $\bstate{0}$ is a $p$-Rademacher random vector, the random variables $(Z_{j})_{j\in V}$ are i.i.d.\ Bernoulli random variable of parameter $p$. 
Moreover, note that $\sum_{j \in V} a_{ij} = 1$ (see \cref{eq:graph D}). 
Hence $\Ex[X_i] = p$.
Therefore, by defining $(X_i)_j = a_{j} Z_{ij}$ and noting that $(X_i)_j\in [0, a_{j}]$, we can apply \cref{thm:hoeffding} to $X_i = \sum_{j\in V} (X_i)_j$ to deduce that for all $\gamma \geq 0$, it holds
\begin{equation}\label{eq:bound of Sn-p}
    \Pr\left[
        |X_i - p| \ge \gamma 
    \right]
    \leq 2 \exp\left(-\frac{2 \gamma^2}{\sum_{j\in V} a_{ij}^2}\right).
\end{equation}
By putting the definitions in \cref{eq:defns of aj Zj Sn} into \cref{eq:bound of Sn-p}, we have
\[
    \Pr\Bigg[
        \Bigg|\sum_{j\in V} a_{ij} \left(\frac{\state{0}_j + 1}{2}\right) - p \Bigg| \geq \gamma 
    \Bigg]
    = \Pr\Bigg[
        | \bar\mu_i - \mu |
        \ge 2\gamma
    \Bigg]
    \leq 2 \exp\left(-\frac{2 \gamma^2}{\sum_{j\in V} a_{ij}^2}\right)
\]
which is equivalent to
\[
\Pr\left[
    \left| 
        \bar\mu_i - \mu \right|
        <2 \gamma
    \right]
    \geq
    1-2\exp\left(- \frac{2 \gamma^2}{\sum_{j\in V} (W_{ij}/D_{ii})^2}
    \right).
\]

We now discuss the proof of \cref{eq: conc mu}. For every $i\in V$, we define
\[
    a_i = \frac{D_{ii}}{\sum_{j \in V} D_{jj}},
    \quad
    X_i = \frac{\state{0}_i + 1}{2},
    \quad\text{and }
    X := \sum_{i \in V} a_{i}Z_{i}.
\]
Then, as done for \cref{eq: conc mui}, an application of \cref{thm:hoeffding} gives that 
\[
    \Pr\left[
    \left| \bar\mu - \mu \right|<2\gamma
    \right]
    \geq
    1-2\exp\left(-\frac{\gamma^2}{2} \left(\sum_{i\in V} D_{ii}^2 \,/\, \Big( \sum_{i\in V} D_{ii}\Big)^2 \right)^{-1}
    \right).
\]
\end{proof}

\subsubsection{Concentration of the state \texorpdfstring{$\bstate{1}$}{}}
We now combine \cref{lemma: concentration sigma,lemma:bound on mu} to study the concentration of $\bstate{1}$.

\begin{proposition}[Concentration of $\bstate{1}$]\label{prop: bounding x1}
Let $\lambda\in \left(-\frac{1}{2},\frac{1}{2}\right)$ and $\bstate{0}$ be a $p$-Rademacher random vector for $p\in [0,1]$.
Let $\mu := 2p-1$ and let $\sigma_\lambda(\bstate{0})$ be as defined in \cref{eq: sigmalambda}. 
If $|\nbr{i}|=\omega(\log n)$, for every $i \in V$, then it holds that
\[
    \Pr\left( 
        \forall i \in V,\,
        \left| \state{1}_i - \mu ( 1 + \sigma_{\lambda}(\bstate{0})) \right| \le 6 \gamma 
    \right)
    \ge 1-2  n \exp\left(-2 \gamma^2 \Big(\max_{i\in V} \sum_{j\in V} \left({W_{ij}}/{D_{ii}}\right)^2\Big)^{-1}\right),
\]
for any  $\gamma\in[0,1]$, $\gamma= o(1)$ and  such that
$\gamma = \omega(\sqrt{\log (n)/d_{\min}})$, where  $d_{\min} := \min_{i\in V} |\nbr{i}|$.
\end{proposition}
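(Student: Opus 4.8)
The plan is to start from \cref{lemma: value of xi(1)}, which provides the exact identity $\state{1}_i = \bar\mu_i\,\bigl(1 + \sigma_{\lambda;i}(\bstate{0})\bigr)$ with $\bar\mu_i = \frac{1}{D_{ii}}\sum_{j\in V} W_{ij}\state{0}_j$, and to compare this with the target value $\mu\bigl(1 + \sigma_\lambda(\bstate{0})\bigr)$. The whole argument rests on two concentration facts that are already available. First, by \cref{lemma:bound on mu} (\cref{eq: conc mui}), for each fixed $i$ one has $|\bar\mu_i - \mu| < 2\gamma$ with probability at least $1 - 2\exp\bigl(-2\gamma^2(\sum_{j\in V}(W_{ij}/D_{ii})^2)^{-1}\bigr)$. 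Second, by \cref{lemma: concentration sigma} (\cref{eq: conc sigmai,eq: conc sigma}), under the stated growth condition on $\gamma$ both the family $\sigma_{\lambda;i}(\bstate{0})$ (simultaneously over all $i$) and the aggregate $\sigma_\lambda(\bstate{0})$ concentrate around the \emph{same} value $\Sigma := \frac{2p(1-p)(1-s(2\lambda))}{1-2p(1-p)(1-s(2\lambda))}$, each lying in $\Sigma(1\pm\gamma)$, with failure probability $n^{-\omega(1)}$.

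On the event where all of these concentrations hold, I would estimate the deviation through the algebraic splitting
\[
    \state{1}_i - \mu\bigl(1+\sigma_\lambda(\bstate{0})\bigr) = (\bar\mu_i - \mu) + (\bar\mu_i - \mu)\,\sigma_{\lambda;i}(\bstate{0}) + \mu\,\bigl(\sigma_{\lambda;i}(\bstate{0}) - \sigma_\lambda(\bstate{0})\bigr),
\]
which is immediate from $\state{1}_i = \bar\mu_i(1+\sigma_{\lambda;i}(\bstate{0}))$. The first summand is at most $2\gamma$ directly. For the second, I combine $|\bar\mu_i - \mu| < 2\gamma$ with $|\sigma_{\lambda;i}(\bstate{0})| \le 1$, which holds on the good event since $|\Sigma| < 1$ (as used in the proof of \cref{theorem: main}) and $\gamma = o(1)$, giving a bound of $2\gamma$. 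For the third, because both $\sigma_{\lambda;i}(\bstate{0})$ and $\sigma_\lambda(\bstate{0})$ lie within $\Sigma(1\pm\gamma)$ their difference is at most $2|\Sigma|\gamma \le 2\gamma$, and $|\mu| \le 1$, so this term is also bounded by $2\gamma$. Adding the three contributions yields $\bigl|\state{1}_i - \mu(1+\sigma_\lambda(\bstate{0}))\bigr| \le 6\gamma$ for every $i$, which is exactly the claimed estimate.

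It remains to account for the probability. The bad event is contained in the union, over all $i\in V$, of the Hoeffding failure events for $\bar\mu_i$, together with the two $\sigma$-concentration failure events. A union bound over the $n$ Hoeffding events produces the factor $2n\exp\bigl(-2\gamma^2(\max_{i\in V}\sum_{j\in V}(W_{ij}/D_{ii})^2)^{-1}\bigr)$ stated in the proposition, after bounding each individual $\sum_{j\in V}(W_{ij}/D_{ii})^2$ by its maximum over $i$. The $\sigma$-concentration events fail only with probability $n^{-\omega(1)}$, which under the hypotheses $\gamma = \omega(\sqrt{\log n/d_{\min}})$ and $|\nbr{i}| = \omega(\log n)$ is of strictly smaller order than the Hoeffding term, hence absorbed into the stated bound. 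I expect the only genuinely delicate point to be this last bookkeeping: one must use that $\sigma_{\lambda;i}(\bstate{0})$ and $\sigma_\lambda(\bstate{0})$ are tied to the \emph{same} concentration value $\Sigma$ — so that their \emph{difference}, not merely their individual magnitudes, is $O(\gamma)$ — and one must check that the two disparate failure-probability scales collapse into the single clean expression appearing in the statement. The decomposition and the three term-by-term estimates are otherwise routine.
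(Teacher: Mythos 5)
Your proof is correct and follows essentially the same route as the paper's: the exact identity $\state{1}_i=\bar\mu_i(1+\sigma_{\lambda;i}(\bstate{0}))$ from \cref{lemma: value of xi(1)}, simultaneous concentration of $\sigma_{\lambda;i}(\bstate{0})$ and $\sigma_{\lambda}(\bstate{0})$ around the common value via \cref{lemma: concentration sigma}, Hoeffding concentration of $\bar\mu_i$ via \cref{lemma:bound on mu}, a splitting into terms each bounded by $2\gamma$ to reach $6\gamma$, and a final union bound over the $n$ \nodes. Your algebraic decomposition (weighting the $\sigma$-difference by $\mu$ instead of $\bar\mu_i$, and the $\bar\mu_i-\mu$ part by $1+\sigma_{\lambda;i}$ instead of $1+\sigma_{\lambda}(\bstate{0})$) is a trivial rearrangement of the paper's, and your absorption of the $n^{-\omega(1)}$ failure probability of the $\sigma$-concentration into the stated Hoeffding-type bound is precisely the same bookkeeping step the paper performs.
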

\begin{proof}
From \cref{lemma: value of xi(1)}, we know that for all $i \in V$
\[
    \state{1}_{i} = \bar\mu_i \cdot \left( 1 + \sigma_{\lambda;i}(\bstate{0}) \right),
    \quad\text{with }
    \bar\mu_i := \frac{1}{D_{ii}}\sum_{j\in V} W_{ij}\,\state{0}_j,
\]
see \cref{eq: def mui sigmai} for the definition of $\sigma_{\lambda;i}(\bstate{0})$.
Via \cref{lemma: concentration sigma}, for $\gamma = \omega(\sqrt{\log (n)/d_{\min}})$ and $\gamma=o(1)$, we get that
\[
    \sigma_{\lambda;i}(\bstate{0}) \!=\! \frac{2p(1-p)(1-{s(2\lambda)})}{1-2p(1-p)(1-{s(2\lambda)})} \cdot (1\pm \gamma)
    \,\text{ and }\,
    \sigma_{\lambda}(\bstate{0}) \!=\! \frac{2p(1-p)(1-{s(2\lambda)})}{1-2p(1-p)(1-{s(2\lambda)})} \cdot (1\pm \gamma)
\]
for all $i$ simultaneously, with high probability.
Therefore, with high probability, it follows that
\[
    | \sigma_{\lambda;i}(\bstate{0}) - \sigma_{\lambda}(\bstate{0}) | \leq 2\gamma.
\]
Thus, for every fixed $i$ we have
\begin{align*}
    \left| \state{1}_i  - \mu ( 1 + \sigma_{\lambda}(\bstate{0})) \right|
    &=\left|\bar\mu_i ( 1 + \sigma_{\lambda;i}(\bstate{0})) - \mu ( 1 + \sigma_{\lambda}(\bstate{0})) \right|
    \\
    &=\left|\bar\mu_i ( 1 + \sigma_{\lambda}(\bstate{0})) - \mu ( 1 + \sigma_{\lambda}(\bstate{0})) + \bar\mu_i (\sigma_{\lambda;i}(\bstate{0})-\sigma_{\lambda}(\bstate{0})) \right|
    \\
    &\leq \left|( 1 + \sigma_{\lambda}(\bstate{0})) (\bar\mu_i-\mu) \right| + 2\gamma \left| \bar\mu_i \gamma \right|
    \leq 2|\bar\mu_i-\mu| + 2\gamma,
\end{align*}
since $|\bar\mu_i|\le 1$ and $0< 1 + \sigma_{\lambda}(\bstate{0})< 2$.
By applying \cref{lemma:bound on mu} (\cref{eq: conc mui}), we get that $|\bar\mu_i-\mu| < 2\gamma$ with high probability, and in particular we obtain
\[
    \Pr\left(
        \left| \state{1}_i - \mu ( 1 + \sigma_{\lambda}(\bstate{0})) \right| \le 6 \gamma
    \right)
    \ge 1-2\exp\left(-2 \gamma^2 \Big(\sum_{j\in V} \left({W_{ij}}/{D_{ii}}\right)^2\Big)^{-1}\right).
\]
We conclude the proof by applying a union bound over all the \nodes in $V$, which implies that
\begin{align*}
    \Pr\left( \forall i\in V, \,
    \left| \state{1}_i - \mu ( 1 + \sigma_{\lambda}(\bstate{0})) \right| \le 6 \gamma \right) 
    &\ge 1-2  \sum_{i\in V}\exp\left(-2 \gamma^2 \Big(\sum_{j\in V} \left({W_{ij}}/{D_{ii}}\right)^2\Big)^{-1}\right)
    \\
    &\ge 1-2  n \exp\left(-2 \gamma^2 \Big( \max_{i\in V} \sum_{j\in V} \left({W_{ij}}/{D_{ii}}\right)^2\Big)^{-1}\right).
\end{align*}
\end{proof}

\subsection{Anti-concentration results}\label{sec: anti-concentration}

In this subsection, we use \cref{thm:Berry-Esseen} to prove an anti-concentration result for the average $|\bar{\mu}|$, which is needed in the proof of \cref{theorem: main} in the case $p=1/2$.
Before doing that, we prove a Berry-Esseen type of result for folded distributions. 
We use the notation CDF for the cumulative distribution function. 

\begin{lemma}[Berry--Esseen for folded distributions]\label{prop:folded Berry-Essen}
Let $X_1,\ldots,X_n$ and $S_n$ be assumed as in \cref{thm:Berry-Esseen}. Moreover, let us denote by $G_n$ the CDF of $|S_n|$ and by $\Psi$ the CDF of the folded standard normal distribution. 
Then, for every $n$,
\[
    \sup_{x\in \mathbb {R} }\left|G_{n}(x)-\Psi(x)\right|\leq 2C_0 \psi_0,
\]
with $C_0,\psi_0$ as in \cref{thm:Berry-Esseen}.
\end{lemma}
\begin{proof}
We are going to use \cref{thm:Berry-Esseen}.
Since both the normal distribution and $S_n$ have mean zero and are symmetric, one can see that for $x\geq 0$
\[
    G_n(x) = 2F_n(x) - 1
    \quad\text{and}\quad
    \Psi(x) = 2\Phi(x)-1,
\]
where $F_n$ and $\Phi$ are the  CDFs of $S_n$ and of the standard normal distribution, respectively.
Then, by \cref{thm:Berry-Esseen}, we obtain
\begin{align*}
    \sup_{x\in \mathbb {R} }\left|G_{n}(x)-\Psi(x)\right|
    &= \sup_{x\in \mathbb {R} }\left|\big(2F_n(x) - 1\big)-\big(2\Phi(x)-1\big)\right|\\
    &= 2 \sup_{x\in \mathbb {R} } \left|F_n(x)-\Phi(x) \right|
    \leq 2 C_0\psi_0.
\end{align*}
\end{proof}

\begin{proposition}[Anti-concentration of $|\bar\mu|$]\label{thm:lower bound mu}
Let $\bstate{0}$ be a Rademacher random vector. Let $\bar\mu$ as defined in \cref{eq: def mu}, i.e., 
\[
    \bar{\mu} = \frac{\sum_{i\in V} D_{ii}x_i^{(0)}}{\sum_{i\in V}D_{ii}},
\]
where $D=(D_{ii})$ is the transition matrix defined in  \cref{eq:graph D}
For any $a>0$, the following holds:
\[
    \Pr\left( |\bar\mu| > a  
    \right) 
    \geq 1 
    -\Psi\left( a\cdot \frac{\sum_{i\in V} D_{ii}}{\sqrt{\sum_{i\in V} D_{ii}^2}} \right)
    -2C_0 \left( \sum_{i \in V} D_{ii}^2 \right)^{-3/2}
        \cdot \sum_{i \in V} D_{ii}^3,
\]
where $\Psi$ is the CDF of the folded standard normal distribution and $C_0$ is the constant from \cref{thm:Berry-Esseen}.
\end{proposition}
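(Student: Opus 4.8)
The plan is to reduce the statement directly to the folded Berry--Esseen bound of \cref{prop:folded Berry-Essen} by identifying the correct normalized sum. First I would set $X_i := D_{ii}\,\state{0}_i$ for each $i \in V$. Since $\bstate{0}$ is a Rademacher random vector (so $p = 1/2$), the entries $\state{0}_i$ are independent with $\Ex[\state{0}_i] = 0$ and $|\state{0}_i| = 1$; hence the $X_i$ are independent with $\Ex[X_i] = 0$, $\Ex[X_i^2] = D_{ii}^2 > 0$, and $\Ex[|X_i|^3] = D_{ii}^3 < \infty$. These are exactly the moment hypotheses required by \cref{thm:Berry-Esseen}, with $\sigma_i^2 = D_{ii}^2$ and $\rho_i = D_{ii}^3$, so that $\psi_0 = \big(\sum_{i\in V} D_{ii}^2\big)^{-3/2} \sum_{i\in V} D_{ii}^3$.

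Second, I would express $\bar\mu$ in terms of the normalized sum $S_n = \big(\sum_{i\in V} X_i\big)/\sqrt{\sum_{i\in V} \sigma_i^2}$ of \cref{thm:Berry-Esseen}. Directly from \cref{eq: def mu},
\[
    \bar\mu = \frac{\sum_{i\in V} D_{ii}\,\state{0}_i}{\sum_{i\in V} D_{ii}} = \frac{\sqrt{\sum_{i\in V} D_{ii}^2}}{\sum_{i\in V} D_{ii}} \cdot S_n,
\]
so $|\bar\mu|$ is a fixed positive multiple of $|S_n|$. Consequently, for the threshold $b := a \cdot \frac{\sum_{i\in V} D_{ii}}{\sqrt{\sum_{i\in V} D_{ii}^2}}$ one has the exact event identity $\{|\bar\mu| > a\} = \{|S_n| > b\}$, and therefore $\Pr(|\bar\mu| > a) = 1 - G_n(b)$, where $G_n$ denotes the CDF of $|S_n|$.

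Third, I would invoke \cref{prop:folded Berry-Essen}, which yields $G_n(b) \le \Psi(b) + 2C_0\psi_0$, where $\Psi$ is the CDF of the folded standard normal. Combining this with the identity above gives
\[
    \Pr(|\bar\mu| > a) = 1 - G_n(b) \ge 1 - \Psi(b) - 2C_0\psi_0,
\]
and substituting the explicit values of $b$ and $\psi_0$ recovers the claimed inequality.

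I do not expect a genuine obstacle here: the analytic content is entirely carried by the folded Berry--Esseen estimate of \cref{prop:folded Berry-Essen}, so what remains is essentially bookkeeping. The one point demanding care is the scaling: one must track that normalizing $\sum_{i\in V} X_i$ by $\sum_{i\in V} D_{ii}$ (to form $\bar\mu$) versus by $\sqrt{\sum_{i\in V} D_{ii}^2}$ (to form $S_n$) differs by exactly the factor $\sqrt{\sum_{i\in V} D_{ii}^2}/\sum_{i\in V} D_{ii}$, so that the threshold $a$ transforms into $b$; getting this ratio right is precisely what produces the argument $a \cdot \sum_{i\in V} D_{ii}/\sqrt{\sum_{i\in V} D_{ii}^2}$ inside $\Psi$. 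A secondary subtlety is that $S_n$ is a discrete random variable carrying atoms, but because $|\bar\mu| = c\,|S_n|$ with $c>0$ the event equivalence is exact irrespective of strict versus non-strict inequalities, so the one-sided passage to $1 - G_n(b) \ge 1 - \Psi(b) - 2C_0\psi_0$ is valid.
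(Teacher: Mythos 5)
Your proposal is correct and follows essentially the same route as the paper's own proof: the identical choice $X_i = D_{ii}\,\state{0}_i$, the same reduction of $\{|\bar\mu|>a\}$ to $\{|S_n|>\alpha\}$ with $\alpha = a\cdot \sum_{i\in V} D_{ii}/\sqrt{\sum_{i\in V} D_{ii}^2}$, and the same application of \cref{prop:folded Berry-Essen} to bound $1-G_n(\alpha)$ from below. The only cosmetic difference is that the paper verifies $D_{ii}>0$ via connectedness of the motif graph when checking $\Ex[X_i^2]>0$, a point you assert without comment.
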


\begin{proof}
By the definition of $\bar\mu$ given in \cref{eq: def mu}, we get
\begin{align*}
    \Pr\left( |\bar\mu| > a 
    \right)
    &= \Pr\left( \left| \frac{\sum_{i\in V} D_{ii} \state{0}_i}{\sum_{i\in V} D_{ii}} \right| > a 
    \right)
    = \Pr\left( \left| {\sum_{i\in V}D_{ii}\,\state{0}_i} \right| > a \sum_{i\in V}D_{ii} 
    \right)
    \\
    &= \Pr\left( \frac{\left| \sum_{i\in V}D_{ii}\,\state{0}_i \right|}{\sqrt{\sum_{i\in V}D_{ii}^2}} > a \cdot \frac{\sum_{i\in V} D_{ii}}{\sqrt{\sum_{i \in V} D_{ii}^2}} 
    \right)
    = \Pr\left( |S_n| > \alpha \right),
\end{align*}
where
\[
    S_{n} := \frac{\sum_{i\in V} D_{ii}\,\state{0}_i}{\sqrt{\sum_{i\in V}D_{ii}^2}}
    \quad\text{and}\quad
    \alpha := a\cdot \frac{\sum_{i \in V} D_{ii}}{\sqrt{\sum_{i\in V} D_{ii}^2}}.
\]

In the remainder, we show how to apply \cref{prop:folded Berry-Essen} to the previous equation.
For every $i\in V$, let us the define the random variable $X_i := D_{ii} \state{0}_i$. 
Note that we have:
\begin{itemize}
\item[(i)] $\Ex[X_i]=0$, since $\bstate{0}$ is a Rademacher random vector;
\item[(ii)] $\Ex[X_i^2] = D_{ii}^2 > 0$, since the hypergraph is connected, i.e., $D_{ii}>0$ for every $i$;
\item[(iii)] $\Ex[|X_i|^3] = D_{ii}^3 < \infty$.
\end{itemize}
Moreover, note that the CDF of $|S_n|$ is given by
\[
    G_n(\alpha) = \Pr\left(|S_n|\leq \alpha \right) 
    = \Pr\left( |\bar\mu| \leq a
    \right) 
    = 1 - \Pr\left( |\bar\mu| > a
    \right).
\]
Then, by \cref{prop:folded Berry-Essen}, we have
\begin{align*}
    \Pr\left( |\bar\mu| > a
    \right) 
    &= 1 - G_n(\alpha) 
    = 1 - \Psi(\alpha) + \Psi(\alpha) - G_n(x) 
    \\
    &\geq 1 - \Psi(\alpha) - 2C_0 \left( \sum_{i \in V} \Ex[X_i^2] \right)^{-3/2}
        \cdot \sum_{i \in V} \Ex[|X_i|^3].
\end{align*}
By using the values of $\Ex[X_i^2]$ and $\Ex[|X_i|^3]$, and the definition of $\alpha$ given above, we conclude that
\[
    \Pr\left( |\bar\mu| > a
    \right)
    = 1 - \Psi\left( 
        a\cdot \frac{\sum_{i\in V} D_{ii}}{\sqrt{\sum_{i \in V}D_{ii}^2}} 
    \right) 
    -2C_0 \left( \sum_{i \in V} D_{ii}^2 \right)^{-3/2}
        \cdot \sum_{i \in V} D_{ii}^3.
\]
\end{proof}
\section{On a class of hypergraphs that satisfy the assumptions}\label{sec: erdos renyi hypergraph}

In this section, we show the existence of a family of hypergraphs that satisfy \cref{assumptions on Gamma} and \cref{extra assumptions on Gamma}.
Such a family naturally generalizes the Erd\H{o}s-R\'enyi random graph model to hypergraphs, by considering that each triple of \nodes forms a hyperedge with probability $p$, independently of the others.

\begin{definition}[Erd\H{o}s-R\'enyi hypergraph]\label{def: er hypergraphs}
A 3-uniform Erd\H{o}s-R\'enyi hypergraph $\Gamma=(V,H)$ of parameters $n$ and $p$ is a simple hypergraph with $|V|=n$ and such that $\{i,j,k\} \in H$ with probability $p$, independently for each triple of \nodes $i,j,k\in V$.
\end{definition}

\begin{theorem}[Erd\H{o}s-R\'enyi hypergraphs satisfy assumptions]\label{prop: erh}
Let $\Gamma$ be a 3-uniform Erd\H{o}s-R\'enyi hypergraph of parameters $n$ and $p$ (\cref{def: er hypergraphs}).
For every $\gamma\in (0,1]$, there exists a constant $K>0$ such that, with probability at least $1-O(n^{-\gamma})$, the hypergraph $\Gamma$ satisfies \cref{assumptions on Gamma} and \cref{extra assumptions on Gamma} for every 
\begin{equation}\label{eq: condition p}
    p\ge K\frac{\sqrt[3]{\log n}}{n^{1-(2\gamma/3)}}.
\end{equation}
\end{theorem}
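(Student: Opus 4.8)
The plan is to verify, one by one, each numbered condition of \cref{assumptions on Gamma} and \cref{extra assumptions on Gamma} by controlling the relevant random quantities through their means, the only genuinely hard point being the quantitative spectral gap. Throughout, let $p$ denote the edge probability of $\Gamma$ and recall that $|\nbr{i}| \sim \mathrm{Bin}\!\big(\binom{n-1}{2},p\big)$, that $W_{ij}=\sum_k A_{ijk}\sim\mathrm{Bin}(n-2,p)$ for $i\neq j$ with $W_{ii}=0$, and that $D_{ii}=2|\nbr{i}|$, so $\Ex[|\nbr{i}|]\asymp n^2 p$ and $\Ex[W_{ij}]=(n-2)p\asymp np$. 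For every fixed $\gamma\in(0,1]$, condition \cref{eq: condition p} forces $n^2 p$ and $np$ to grow polynomially in $n$; in particular $n^2p=\omega(\log n)$ and $np=\omega(\log n)$, which is what drives all the union bounds below.

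First I would record the two elementary concentration facts. Applying \cref{thm: chernoff bound} to each $|\nbr{i}|$ with a parameter $\gamma_0=o(1)$ satisfying $\gamma_0^2\,\Ex[|\nbr{i}|]=\omega(\log n)$, and taking a union bound over the $n$ vertices, gives $|\nbr{i}|=(1\pm o(1))\binom{n-1}{2}p$ for all $i$ simultaneously, with high probability; this yields \cref{assumptions on Gamma}.1 and, since all degrees are asymptotically equal, $\Delta=1+o(1)=O(1)$. A Chernoff bound on each $W_{ij}$ together with a union bound over the $\binom n2$ pairs (admissible because $np=\omega(\log n)$) gives $W_{ij}=(1\pm o(1))(n-2)p\ge 1$ for all $i\neq j$, so $G(\Gamma)\supseteq K_n$ is connected and non-bipartite, hence $\nu<1$ and \cref{assumptions on Gamma}.2 holds. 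Moreover $W_{ij}/D_{ii}=(1\pm o(1))/n$, whence $\sum_j(W_{ij}/D_{ii})^2=(1\pm o(1))/n$ and, by \cref{def: epsilon delta}, $\varepsilon=\Theta(\sqrt{\log n/n})$, saturating \cref{lemma: lw bound epsilon}. The same degree estimates give $\sum_i D_{ii}\asymp n^3p$, $\sum_i D_{ii}^2\asymp n^5p^2$, $\sum_i D_{ii}^3\asymp n^7p^3$, so $\sqrt{\sum_iD_{ii}^2}/\sum_iD_{ii}\asymp n^{-1/2}$ and $(\sum_iD_{ii}^2)^{-3/2}\sum_iD_{ii}^3\asymp n^{-1/2}=o(1)$, which is exactly \cref{extra assumptions on Gamma}.2.

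The core of the argument is the \emph{quantitative} gap $\nu=o(1)$ needed for \cref{assumptions on Gamma}.3. The key observation is that $W$ is a \emph{linear} function of the independent hyperedge indicators, $W=\sum_{\{i,j,k\}}A_{ijk}M_{ijk}$, where $M_{ijk}$ is the (deterministic) adjacency matrix of the triangle on $\{i,j,k\}$. Centering gives $W=\Ex[W]+E$ with $\Ex[W]=(n-2)p(J-I)$, whose spectrum is explicit: a simple top eigenvalue $\asymp n^2p$ (eigenvector $\mathbf 1$) and all other eigenvalues equal to $-(n-2)p$. Since $E=\sum_{\{i,j,k\}}(A_{ijk}-p)M_{ijk}$ is a sum of independent, bounded, mean-zero symmetric matrices, the matrix Bernstein inequality yields $\|E\|_2=O(n\sqrt{p\log n})$ with high probability. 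I would then pass from the spectrum of $W$ to that of $P=D^{-1}W$ through the symmetric conjugate $\mathcal A=D^{-1/2}WD^{-1/2}$, which is similar to $P$ and so shares its eigenvalues, using the degree concentration to replace $D^{-1/2}$ by $\bar d^{-1/2}I$ up to relative error $o(1)$ (with $\bar d\asymp n^2p$): writing $\nu(P)=\nu(\mathcal A)=\|\mathcal A-\phi\phi^{\top}\|_2$ for the Perron direction $\phi\propto D^{1/2}\mathbf 1$, Weyl's inequality and the norm bound give
\[
    \nu=\nu(P)=O\!\left(\frac{\|E\|_2}{\bar d}\right)=O\!\left(\sqrt{\frac{\log n}{n^2 p}}\right)=o(1).
\]
This is precisely the comparison between the spectra of $D^{-1}W$ and of $W$ announced before \cref{prop: erh}. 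I expect this step to be the main obstacle: both establishing $\|E\|_2=O(n\sqrt{p\log n})$ for the correlated weights $W_{ij}$ (resolved by the linear-in-independent-Bernoulli representation above) and converting it into a bound on $\nu(P)$ while correctly handling the degree normalisation and the fact that the Perron vector of $\mathcal A$ is $D^{1/2}\mathbf 1$ rather than $\mathbf 1$.

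It remains to pick the integer $m$ and, for the unbiased initialisation, the scale $\delta$. Because $\nu$ is polynomially small while $\varepsilon\asymp\sqrt{\log n/n}$ and $\Delta=O(1)$, a direct computation shows that already a constant $m$ (indeed $m=2$) satisfies $\frac{\nu^m}{1-\nu}\sqrt{\Delta n}\le m\varepsilon$ with $m\varepsilon=O(\varepsilon)=o(1)$; this is \cref{assumptions on Gamma}.3, and it is exactly here that the exponent $1-2\gamma/3$ and the factor $\sqrt[3]{\log n}$ in \cref{eq: condition p} are calibrated, so that at the threshold $\nu^2\sqrt n\asymp(\log n)^{1/6}n^{-2\gamma/3}\varepsilon=o(\varepsilon)$. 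Finally, for \cref{extra assumptions on Gamma}.1 (a purely structural condition on $\Gamma$, independent of the Rademacher bias) I would choose any $\delta$ with $m\varepsilon^2\ll\delta\ll n^{-1/2}$, possible since $m\varepsilon^2\asymp\log n/n\ll n^{-1/2}$ — for instance $\delta=n^{-1/2}/\log n$, which also gives $\delta<\varepsilon$ and $\delta=o(\sqrt{\sum_iD_{ii}^2}/\sum_iD_{ii})$. All failure probabilities above are $o(1)$, so $\Gamma$ satisfies both sets of assumptions with high probability.
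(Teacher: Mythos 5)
Your proposal is correct and reaches the same conclusion, but its central spectral step takes a genuinely different route from the paper's. The skeleton is shared: Chernoff plus union bounds give $|\nbr{i}|=(1\pm o(1))\binom{n-1}{2}p$ for all $i$, hence \cref{assumptions on Gamma}.1, $\Delta=O(1)$, $\varepsilon=\Theta(\sqrt{\log n/n})$, and \cref{extra assumptions on Gamma}.2; the real work is the gap $\nu$. The paper (\cref{prop: erh nu}) bounds $\|W-\Ex[W]\|_2$ by the Frobenius norm of the entrywise fluctuation matrix and then transfers the spectrum of $W$ to that of $P=D^{-1}W$ via the dedicated comparison \cref{thm: adjacency vs transition matrix spectra} with the optimized scaling $c=2/(d_{\max}+d_{\min})$, obtaining $\nu=O(n^{-\gamma})$ and consequently taking $m>1/\gamma$ in \cref{assumptions on Gamma}.3. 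You instead exploit the fact that $W=\sum_e A_e M_e$ is linear in the independent hyperedge indicators and apply matrix Bernstein to get $\|W-\Ex[W]\|_2=O(n\sqrt{p\log n})$, then pass to $P$ through the symmetric conjugate $D^{-1/2}WD^{-1/2}$, degree concentration, and Weyl's inequality; in a full write-up you would still need to record the variance computation $\|\sum_e\Ex[(A_e-p)^2M_e^2]\|_2=O(n^2p)$ and the estimate showing that replacing $D$ by $\bar d I$ costs only $O(\sqrt{\log n/(n^2p)})$ in operator norm, but both are routine. Your route buys a stronger, $p$-uniform gap $\nu=O(\sqrt{\log n/(n^2p)})$, which is exactly why the single constant $m=2$ works for every $\gamma\in(0,1]$, and it yields connectivity and non-bipartiteness for free (all $W_{ij}\ge 1$ whp, so the motif graph contains $K_n$), whereas the paper derives \cref{assumptions on Gamma}.2 from the spectral estimate itself. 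The paper's route is more elementary (scalar Chernoff plus \cref{thm: symmetric perturbation eigenvalues} only), but it has a quantitative slip that your argument avoids: the deviation claimed in \cref{eq: chernoff wij}, namely $\sqrt{\log n/(pn)}$, lies \emph{below} the standard deviation $\Theta(\sqrt{np})$ of $W_{ij}$ throughout the admissible range of $p$ (and \cref{eq: erh degree} has the same problem once $p\gg n^{-1/2}$), so those bounds cannot hold with high probability as stated; with the corrected orders $O(\sqrt{np\log n})$ per entry and $O(n^{3/2}\sqrt{p\log n})$ in Frobenius norm the paper's argument still closes, but only with the weaker gap $\nu=O((\log n)^{1/3}n^{-\gamma/3})$ and $m$ of order $3/\gamma$. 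In this respect your matrix-Bernstein bound is not only a different decomposition but the sharper and more robust of the two arguments.
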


We devote the remainder of this section to the proof of \cref{prop: erh}. Before proceeding with the proof, though, we state a classical result about the perturbation theory of eigenvalues of Hermitian matrices, that will be used in our argument.
Note that the matrices we deal with in this paper are real, hence they are Hermitian whenever they are symmetric.

\begin{theorem}[Eigenvalue Perturbation {\cite[Corollary~4.10]{stewart1990matrix}}]\label{thm: symmetric perturbation eigenvalues}
For a matrix $M$, let $\lambda_i(M)$ denote the $i$-th largest eigenvalue of $M$. Let $A,E \in \mathbb{C}^{n\times n}$ be Hermitian matrices. Then 
\[
    \max_{i=1,\ldots,n} |\lambda_i(A+E)-\lambda_i(A)| \le \|E\|_2.
\]
\end{theorem}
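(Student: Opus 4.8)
The plan is to establish the stated bound as a direct consequence of the Courant–Fischer min–max characterization of the eigenvalues of a Hermitian matrix, together with the observation that, for a Hermitian matrix, the $\ell^2$ operator norm coincides with the largest eigenvalue in absolute value. Throughout I would order the eigenvalues of any Hermitian $M\in\mathbb{C}^{n\times n}$ as $\lambda_1(M)\ge\cdots\ge\lambda_n(M)$, all real by the spectral theorem, and I would recall the variational formula
\[
    \lambda_i(M) = \min_{\substack{T\subseteq\mathbb{C}^n \\ \dim T = n-i+1}}\ \max_{\substack{x\in T \\ \|x\|_2=1}} x^* M x,
\]
which itself follows from the spectral theorem applied to $M$.

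First I would prove the upper Weyl-type inequality $\lambda_i(A+E)\le\lambda_i(A)+\lambda_1(E)$ for each $i$. Let $T^\star$ be a subspace of dimension $n-i+1$ attaining the outer minimum in the variational formula for $\lambda_i(A)$, so that $\max_{x\in T^\star,\,\|x\|_2=1} x^* A x = \lambda_i(A)$. Using $T^\star$ as a (not necessarily optimal) competitor in the variational formula for $\lambda_i(A+E)$ together with the Rayleigh-quotient bound $x^* E x\le\lambda_1(E)$ valid for every unit vector $x$, I would obtain
\[
    \lambda_i(A+E) \le \max_{\substack{x\in T^\star \\ \|x\|_2=1}} x^*(A+E)x \le \max_{\substack{x\in T^\star \\ \|x\|_2=1}} \big( x^* A x + \lambda_1(E) \big) = \lambda_i(A)+\lambda_1(E).
\]

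Next I would derive the matching lower bound by applying the inequality just proved with $A$ replaced by $A+E$ and $E$ replaced by $-E$: since $\lambda_1(-E)=-\lambda_n(E)$, this yields $\lambda_i(A)\le\lambda_i(A+E)-\lambda_n(E)$, that is, $\lambda_i(A+E)\ge\lambda_i(A)+\lambda_n(E)$. Combining the two directions gives $\lambda_n(E)\le\lambda_i(A+E)-\lambda_i(A)\le\lambda_1(E)$ for every $i$. Finally, since $E$ is Hermitian, $\|E\|_2=\max_j|\lambda_j(E)|\ge\max\big(|\lambda_1(E)|,|\lambda_n(E)|\big)$, so both $\lambda_1(E)$ and $\lambda_n(E)$ lie in $[-\|E\|_2,\|E\|_2]$; hence $|\lambda_i(A+E)-\lambda_i(A)|\le\|E\|_2$ for all $i$, which is the claim.

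There is no serious obstacle here, as this is a classical result; the only point requiring care is the correct use of the min–max formula, specifically choosing the \emph{extremal} subspace for the unperturbed matrix $A$ as a \emph{test} subspace for $A+E$, which forces the inequality in the right direction. An alternative route would be the path argument via $t\mapsto\lambda_i(A+tE)$ and Lipschitz continuity of eigenvalues, but the Courant–Fischer argument is cleaner and entirely self-contained given only the spectral theorem.
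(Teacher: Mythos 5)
Your proof is correct and complete: it is the classical Courant--Fischer derivation of Weyl's perturbation inequality, $\lambda_i(A)+\lambda_n(E)\le\lambda_i(A+E)\le\lambda_i(A)+\lambda_1(E)$, followed by the observation that for Hermitian $E$ the spectral norm equals $\max_j|\lambda_j(E)|$. Note that the paper itself gives no proof of this statement---it is quoted as a known result from Stewart and Sun \cite[Corollary~4.10]{stewart1990matrix}---so there is nothing internal to compare against; your argument is precisely the standard route by which that corollary is obtained, including the key step of reusing the extremal subspace $T^\star$ for $A$ as a test subspace for $A+E$ (which exists since the minimum in the variational formula is attained, e.g.\ by the span of the eigenvectors associated with $\lambda_i(A),\ldots,\lambda_n(A)$), and the self-referential trick $(A,E)\mapsto(A+E,-E)$ with $\lambda_1(-E)=-\lambda_n(E)$ to get the matching lower bound.
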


We are also going to use an additional result, that compares the spectra of different matrix representations of the same graph, namely the adjacency and transition matrices.
The proof uses an argument seen in~\cite{lutzeyer2017comparing}, but refined since the transition matrix $P$ is not symmetric in our case.

In order to prove that $\Gamma$ satisfies the \cref{assumptions on Gamma} and \cref{extra assumptions on Gamma}, we need information on $\nu$, namely on the second largest eigenvalue (in absolute value) of the transition matrix $P$ of a random walk on the motif graph $G(\Gamma)$.
In particular, exploiting \cref{thm: symmetric perturbation eigenvalues}, in the next lemma we are going to first give an estimate of the eigenvalues of the weighted adjacency matrix $W$ by considering the eigenvalues of its expected matrix and by providing error bounds with high probability.
Then, using \cref{thm: adjacency vs transition matrix spectra}, we will approximate the eigenvalues of $P$ with a mapping of the corresponding eigenvalues of $W$, which comes at the cost of an additional error.
Note that the next proposition also derives from \cite[Theorem~1]{chung2011spectra}. In fact, our proposition restricts to motif graphs of Erd\H{o}s-R\'enyi hypergraphs. However, we keep it since our proof uses slightly different tools and is much more compact.

\begin{lemma}[Spectra of Erd\H{o}s-R\'enyi motif graphs]\label{prop: erh nu}
Let $\Gamma$ be a Erd\H{o}s-R\'enyi hypergraph with parameter $p$.
Let $G=G(\Gamma)$ be the motif graph of $\Gamma$. 
Let $\lambda_1(W) \ge \lambda_2(W) \ge \ldots \ge \lambda_n(W)$ be the eigenvalues of the weighted adjacency matrix $W$ of $G$, counted with multiplicities. 
For every $i>1$ it holds that, with high probability,
\[
    \Big|\, \lambda_1(W) - p(n-2)(n-1) \Big| \leq \sqrt{{n \log(n)/p}}, 
    \qquad
    \Big|\, \lambda_i(W) - \left( -p(n-2) \right) \Big| \leq \sqrt{{n \log (n)/p}}.
\]
\end{lemma}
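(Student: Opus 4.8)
The plan is to combine the Hermitian eigenvalue perturbation bound (\cref{thm: symmetric perturbation eigenvalues}) with a high-probability bound on the spectral norm of the \emph{centered} weighted adjacency matrix $E := W - \Ex[W]$. First I would identify the spectrum of $\Ex[W]$. Since each triple of distinct \nodes is a hyperedge independently with probability $p$, for $i \neq j$ the entry $W_{ij} = \sum_{k} A_{ijk}$ is a sum of $n-2$ independent Bernoulli($p$) variables, so $\Ex[W_{ij}] = p(n-2)$; moreover $W_{ii} = 0$ because $\Gamma$ is simple. Hence
\[
    \Ex[W] = p(n-2)\,(J - I),
\]
where $J$ is the all-ones matrix and $I$ the identity. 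The eigenvalues of $J - I$ are $n-1$ (eigenvector $\mathbf{1}$) and $-1$ (multiplicity $n-1$), so $\Ex[W]$ has eigenvalues $p(n-2)(n-1)$ (simple) and $-p(n-2)$ (multiplicity $n-1$), which are exactly the two centers appearing in the statement.

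With this in hand, \cref{thm: symmetric perturbation eigenvalues} applied to the Hermitian pair $(\Ex[W], E)$ gives $|\lambda_i(W) - \lambda_i(\Ex[W])| \le \|E\|_2$ for every $i$, so the whole lemma reduces to the single estimate $\|E\|_2 \le \sqrt{n \log n / p}$ with high probability. The crucial subtlety in bounding $\|E\|_2$ is that the entries of $W$ are \emph{not} independent: $W_{ij}$ and $W_{i\ell}$ both count the common hyperedge $\{i,j,\ell\}$. I would therefore not treat $E$ as a matrix with independent entries, but instead expand it over the independent hyperedge indicators,
\[
    E = \sum_{\{a,b,c\}} (A_{abc} - p)\, M^{(abc)},
\]
where $M^{(abc)}$ is the deterministic adjacency matrix of the triangle on $\{a,b,c\}$; the summands are independent, mean-zero, symmetric, and have operator norm at most $2$. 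A matrix concentration inequality (matrix Bernstein, in the spirit of the Chung--Radcliffe estimate referenced before the lemma) then controls $\|E\|_2$ through the matrix variance $\big\| \sum_{\{a,b,c\}} \operatorname{Var}(A_{abc})\,(M^{(abc)})^2 \big\|_2$ and the uniform bound on the summands. The variance is a pure counting computation: $\sum_{\{a,b,c\}} (M^{(abc)})^2 = (n-2)^2 I + (n-2) J$, whose norm is $\Theta(n^2)$, so the variance proxy is $\Theta(p n^2)$, and the density lower bound in \cref{eq: condition p} is what guarantees that the variance term dominates the sub-exponential term in the Bernstein tail.

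The hard part is exactly this spectral-norm step: one must pass from the dependent entries of $W$ to the independent indicators $A_{abc}$, compute the matrix variance, and carry out the tail bookkeeping so that the resulting deviation stays within $\sqrt{n \log n / p}$ in the density regime of \cref{eq: condition p}. Once $\|E\|_2 \le \sqrt{n \log n / p}$ is established on a single high-probability event, the perturbation inequality transfers it simultaneously to all eigenvalues of $W$, yielding both displayed bounds at once. By contrast, the determination of $\Ex[W]$ and its spectrum, and the final invocation of \cref{thm: symmetric perturbation eigenvalues}, are routine.
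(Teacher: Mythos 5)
You follow the same skeleton as the paper: you identify $\Ex[W]=p(n-2)(J-I)$ (this is exactly the paper's $\widehat W$), compute its two eigenvalues, and use \cref{thm: symmetric perturbation eigenvalues} to reduce the lemma to a single high-probability estimate $\|W-\Ex[W]\|_2\le\sqrt{n\log n/p}$. Where you genuinely diverge is the noise bound: the paper controls each entry with \cref{thm: chernoff bound} and then uses the Frobenius norm, while you expand $W-\Ex[W]$ over the independent hyperedge indicators and invoke matrix Bernstein. Your decomposition, the uniform bound $\|(A_{abc}-p)M^{(abc)}\|_2\le 2$, and the counting identity $\sum_{\{a,b,c\}}(M^{(abc)})^2=(n-2)^2I+(n-2)J$ are all correct. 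The gap is in the final bookkeeping claim: with variance proxy $\sigma^2=\Theta(pn^2)$ and $L=2$, matrix Bernstein yields $\|W-\Ex[W]\|_2=O\big(n\sqrt{p\log n}\big)$ with high probability, \emph{not} $O\big(\sqrt{n\log n/p}\big)$. Concretely, evaluating the Bernstein tail at $t=\sqrt{n\log n/p}$, the exponent in the variance-dominated regime (which \cref{eq: condition p} does guarantee, as you say) is $t^2/(2\sigma^2)=\Theta\big(\log n/(p^2n)\big)$, and this beats the dimensional factor $\log(2n)$ only if $p^2n=O(1)$, i.e.\ $p=O(n^{-1/2})$. That is an \emph{upper} bound on $p$, which the density hypothesis \cref{eq: condition p} — a lower bound — cannot supply: constant $p$, or even the minimal admissible density $p=\Theta\big((\log n)^{1/3}n^{-1/3}\big)$ at $\gamma=1$, violates it.

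You should know this gap is not closable by a sharper inequality, because the stated bound is false in that regime: since $\|E\|_2\ge\|E\|_F/\sqrt n$ and $\|E\|_F^2$ concentrates around its mean $n(n-1)(n-2)p(1-p)$, one has $\|E\|_2=\Omega\big(n\sqrt{p(1-p)}\big)$ with high probability, which exceeds $\sqrt{n\log n/p}$ as soon as $np^2\gg\log n$. So your method in fact returns the correct order of the noise norm, and the discrepancy it exposes is real rather than an artifact of your approach: the paper's own proof hides the same defect one level down, since its Chernoff step asserts $|W_{ij}-p(n-2)|\le\sqrt{\log n/(pn)}$ w.h.p., a deviation that under \cref{eq: condition p} is smaller than one standard deviation $\sqrt{(n-2)p(1-p)}$ of the binomial entry $W_{ij}$ — which is impossible. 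In short: as written, your last step over-claims; what your argument actually proves is $|\lambda_i(W)-\lambda_i(\Ex[W])|=O\big(n\sqrt{p\log n}\big)$ for all $i$, and making the lemma true requires either weakening its bound to this order (and propagating that change into \cref{prop: erh}) or adding an upper restriction on $p$.
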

\begin{proof}
Let us consider the weighted adjacency matrix $W$ of the motif graph $G=G(\Gamma)$ of an Erd\H{o}s-R\'enyi hypergraph $\Gamma$ (\cref{def: er hypergraphs}).
Note that, for every fixed pair of \nodes $i,j \in V$, the corresponding entry of the matrix $W$ is $W_{ij} = \sum_{k\in V}A_{i,j,k}$.
Each entry $W_{ij}$ is a binomial random variable and $\Ex[W_{ij}] = p(n-2)$.
By an application of \cref{thm: chernoff bound}, with $\gamma = \sqrt{\log(n)/ pn (p(n-2))^2}$, and of the union bound over all pairs of \nodes, it follows that
\begin{equation}\label{eq: chernoff wij}
    \Pr\left(
        \exists\, i,j \in V \,:\,
        \left| W_{ij} - p(n-2) \right| \ge \sqrt{{\log (n)/p n}}
    \right) \le n^{-\omega(1)}.
\end{equation}
Let us now write $W$ as the sum of the expected adjacency matrix and of a stochastic matrix that adjusts the actual edge weights, namely, with high probability,
\[
    W=\widehat W + \widetilde W, 
    \qquad 
    \widehat W_{ij} = p(n-2),
    \quad\text{and } |\widetilde W_{ij}| < 
    \sqrt{{\log(n)/p n}},
\]
for every $i\neq j$ and $\widehat W_{ii}=\widetilde W_{ii}=0$ on the diagonal.
In a more compact form, we can write $\widehat W = p(n-2)[J - I]$, where $J$ is the matrix of all 1s and $I$ is the identity matrix. Therefore, by denoting with $\lambda_i(W)$ the $i$-th largest eigenvalue of $W$, the matrix $\widehat W$ has eigenvalues%
\footnote{The rank of $J$ is $1$, i.e., it has one nonzero eigenvalue equal to $n$ with eigenvector $\mathbf{1}$, and all the remaining eigenvalues are $0$. Subtracting the identity matrix $I$ from $J$ results in a shift of all eigenvalues $\lambda$ by $-1$, because $Ax=(J-I)x=Jx-x=(\lambda-1)x$. 
Therefore the eigenvalues of $J-I$ are $n-1$, and $-1$ with multiplicity $n-1$. We conclude multiplying by $p(n-2)$.}
\[
    \lambda_1(\widehat W) = p(n-2)(n-1),
    \qquad\text{and }\quad
    \lambda_2(\widehat W)=\lambda_3(\widehat W)=\ldots=\lambda_n(\widehat W) = -p(n-2).
\]
Moreover, by denoting with $\| \cdot \|_F$ the Frobenius norm, it follows that, with high probability,
\[
    \|\widetilde W\|_2 \leq \|\widetilde W\|_F 
    = \sqrt{\sum_{i=1}^n\sum_{i=j}^n |\widetilde W_{ij}|^2}
    \le \sqrt{\sum_{i=1}^n\sum_{i=j}^n \frac{\log n}{p n}}
    = \sqrt{n^2 \frac{\log n}{p n}}
    = \sqrt{\frac{n \log n}{p}}.
\]
Finally, note that $\widehat W, \widetilde W$ are real and symmetric matrices since $G(\Gamma)$ is an undirected graph.
Therefore, by applying \cref{thm: symmetric perturbation eigenvalues}, for every $i$ we get that 
\[
    \Big|\, \lambda_i(W) - \lambda_i(\widehat W) \Big| \leq \sqrt{{n \log (n)/p}},
\]
which combined with the previous observation on the eigenvalues of $\widehat W$ concludes the proof.
\end{proof}

\begin{lemma}[Spectra of adjacency and transition matrices]\label{thm: adjacency vs transition matrix spectra}
Let $G$ be a graph, and let $d_{\min},d_{\max}$ respectively be the minimum and maximum degrees of \nodes in $G$. 
Let $W$ be the weighted adjacency matrix of $G$, and let $P$ be the transition matrix of a random walk on $G$. 
Denote by $\lambda_i(A)$ the $i$-th largest eigenvalue of matrix $A$. 
For every $i=1,\ldots,n$, it holds that%
\footnote{Note that $d_{\max}/d_{\min} = \Delta$, as defined in \cref{eq: def delta}.}
\[
    \left|\lambda_{i}(P)-\frac{2}{d_{\max}+d_{\min}}\cdot\lambda_{i}(W)\right| 
    \; \le \; \frac{d_{\max}}{d_{\min}} \cdot \frac{d_{\max}-d_{\min}}{d_{\max}+d_{\min}}.
\]
\end{lemma}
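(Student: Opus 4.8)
The plan is to avoid a direct operator-norm perturbation estimate (which would introduce the magnitude of the eigenvalues of $W$) and instead compare the two spectra through Rayleigh quotients. The first step is to remove the non-symmetry of $P=D^{-1}W$: although $P$ is not symmetric, it is similar to the symmetric matrix $S := D^{-1/2}WD^{-1/2}$ via $P = D^{-1/2}SD^{1/2}$, so $\lambda_i(P)=\lambda_i(S)$ for every $i$. Writing $\bar d := (d_{\max}+d_{\min})/2$, so that $2/(d_{\max}+d_{\min}) = 1/\bar d$, the goal reduces to proving $|\lambda_i(S) - \tfrac1{\bar d}\lambda_i(W)| \le \tfrac{d_{\max}}{d_{\min}}\cdot\tfrac{d_{\max}-d_{\min}}{d_{\max}+d_{\min}}$ for all $i$.

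Second, I would express both quantities via the Courant–Fischer min–max theorem on a common variable $y\in\mathbb{R}^n$. Applying Courant–Fischer to the symmetric matrix $S$ and substituting $x=D^{1/2}y$ turns the ordinary Rayleigh quotient $x^TSx/x^Tx$ into the generalized quotient $\tilde R(y):=y^TWy/y^TDy$; since $D^{1/2}$ is invertible it maps $i$-dimensional subspaces bijectively onto $i$-dimensional subspaces, so $\lambda_i(S)=\max_{\dim V=i}\min_{0\ne y\in V}\tilde R(y)$. On the other hand $\tfrac1{\bar d}\lambda_i(W)=\max_{\dim V=i}\min_{0\ne y\in V}\tfrac1{\bar d}R(y)$ with $R(y):=y^TWy/y^Ty$. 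Both are then a max–min of scale-invariant functions over the \emph{same} family of subspaces.

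Third, I would establish the pointwise estimate $|\tilde R(y)-\tfrac1{\bar d}R(y)|\le\epsilon$ with $\epsilon := \tfrac{d_{\max}}{d_{\min}}\cdot\tfrac{d_{\max}-d_{\min}}{d_{\max}+d_{\min}}$. Factoring out $y^TWy$, the difference equals $y^TWy\cdot\frac{\bar d\,y^Ty-y^TDy}{\bar d\,(y^Ty)(y^TDy)}$; since $\bar d$ is the midpoint of $[d_{\min},d_{\max}]$ one has $|\bar d-D_{ii}|\le(d_{\max}-d_{\min})/2$, whence $|\bar d\,y^Ty - y^TDy|\le\frac{d_{\max}-d_{\min}}{2}y^Ty$, while $y^TDy\ge d_{\min}y^Ty$. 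The remaining factor $|y^TWy|/y^Ty=|R(y)|$ is controlled by the crucial observation that, because $W$ is symmetric with nonnegative entries, zero diagonal, and row sums $\sum_j W_{ij}=D_{ii}\le d_{\max}$, Gershgorin's theorem confines every eigenvalue of $W$ to $[-d_{\max},d_{\max}]$, hence $|R(y)|\le d_{\max}$. Multiplying these bounds produces exactly $\epsilon$, with no dependence on the actual size of $\lambda_i(W)$.

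Finally, I would invoke the elementary fact that $V\mapsto\max_{\dim V=i}\min_{0\ne y\in V}(\cdot)$ is $1$-Lipschitz with respect to the uniform distance between the functions being optimized: from $|\tilde R - \tfrac1{\bar d}R|\le\epsilon$ it follows that $|\lambda_i(S)-\tfrac1{\bar d}\lambda_i(W)|\le\epsilon$, which together with $\lambda_i(P)=\lambda_i(S)$ is the claim. I expect the main obstacle to lie in the first two steps: correctly symmetrizing $P$ and justifying that the change of variables $x=D^{1/2}y$ converts ordinary Courant–Fischer for $S$ into a min–max of the generalized quotient over the same subspace family as for $W$. This is precisely where the non-symmetry of $P$ must be handled, and is presumably why the argument of \cite{lutzeyer2017comparing} had to be refined. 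The Gershgorin bound $|R(y)|\le d_{\max}$ is the other linchpin, since it is what keeps the final estimate purely a function of the degree ratio.
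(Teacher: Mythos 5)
Your proof is correct, and it takes a genuinely different route from the paper's. Both arguments begin with the same symmetrization step---the similarity $P = D^{-1/2}\big(D^{-1/2}WD^{-1/2}\big)D^{1/2}$, so that $\lambda_i(P)=\lambda_i(S)$ for $S:=D^{-1/2}WD^{-1/2}$---but they diverge immediately afterwards. The paper forms the difference matrix $cW-S$ with a free scaling constant $c$, applies the cited eigenvalue perturbation theorem (\cref{thm: symmetric perturbation eigenvalues}) to get $|c\,\lambda_i(W)-\lambda_i(P)|\le\|cW-S\|_2$, bounds this norm via the entrywise factorization $[cW-S]_{ij}=W_{ij}\big(c-D_{ii}^{-1/2}D_{jj}^{-1/2}\big)$ together with H\"older's inequality $\|A\|_2\le\sqrt{\|A\|_1\|A\|_\infty}$, and only at the end optimizes over $c$, which produces $c=2/(d_{\max}+d_{\min})$. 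You never form a difference matrix: you compare the two max--min values through Courant--Fischer, reducing the claim to the pointwise estimate on Rayleigh quotients with a common numerator and perturbed denominators, and you close it with the midpoint bound $|\bar d-D_{ii}|\le(d_{\max}-d_{\min})/2$, the bound $y^TDy\ge d_{\min}\,y^Ty$, and Gershgorin's bound $|y^TWy|/y^Ty\le d_{\max}$; the arithmetic indeed yields exactly $\frac{d_{\max}}{d_{\min}}\cdot\frac{d_{\max}-d_{\min}}{d_{\max}+d_{\min}}$, so the argument is complete. Your route buys self-containedness (Courant--Fischer plus Gershgorin, no external perturbation theorem) and works directly with the generalized Rayleigh quotient $y^TWy/y^TDy$, which is the natural object since the eigenvalues of $P$ are the generalized eigenvalues of the pencil $(W,D)$; note that your Gershgorin step plays precisely the role of the paper's H\"older step, as both amount to $\|W\|_2\le d_{\max}$. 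What the paper's route buys is brevity given the black-box theorem, and the explicit optimization over $c$ makes transparent why $2/(d_{\max}+d_{\min})$ is the right scaling, whereas in your argument the midpoint $\bar d=(d_{\max}+d_{\min})/2$ has to be posited in advance.
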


\begin{proof}
Recall that $P:=D^{-1}W$, where $D$ is the diagonal degree matrix of $G$. While $W$ is symmetric, in general $P$ is not, unless the graph is regular.
Let us define 
\[
    \widehat{P} := cW,
\] 
a rescaled version of $W$, for some positive $c\in \mathbb{R}$ that will be specified later. In what follows, we will compare $\hat{P}$ with $P$.
Note that $\widehat{P}$ is real and symmetric and that the eigenvalues and eigenvectors of $W$ and $\widehat{P}$ are related as follows:
\[
    Wv_i = \lambda_i(W) v_i \iff
    \widehat{P}v_i = (c \lambda_i(W)) v_i.
\]
Furthermore, let us also define 
\[
    \widetilde{P} := D^{-\frac{1}{2}}WD^{-\frac{1}{2}},
\]
a normalized version of $W$ that will also be used in the comparison between $P$ and $\widehat{P}$.
Observe that $\widetilde{P}$ is symmetric.
Moreover, note that the eigenvalues and eigenvectors of $P$ and $\widetilde{P}$ are related as follows:
\[
    Pv_i = \lambda_i(P) v_i \iff \widetilde{P} \cdot D^{\frac{1}{2}}v_i = \lambda_i(P)\cdot D^{\frac{1}{2}}v_i.
\]
Since $\widehat P$ and $\widetilde P$ are real and symmetric, by using \cref{thm: symmetric perturbation eigenvalues} and the aforementioned relations between the eigenvalues of the matrices, it can be seen that:
\begin{equation}\label{eq: lowerbound of Phat - P}
\|\widehat{P}-\widetilde{P}\|_2 
\ge |\lambda_i(\widetilde{P} + \widehat{P} - \widetilde{P} ) - \lambda_i(\widetilde{P})| =  |\lambda_i(\widehat{P})-\lambda_i(\widetilde{P})|
= |c\,\lambda_i(W)-\lambda_i(P)|.
\end{equation}
Observe that, since $D$ is diagonal, the $(i,j)$-entry of the matrix $\widehat{P}-\widetilde{P}$ can be written as
\[
    [\widehat{P}-\widetilde{P}]_{ij}
    = [cW-D^{-\frac{1}{2}}WD^{-\frac{1}{2}}]_{ij}
    = W_{ij}\left(c - D_{ii}^{-\frac{1}{2}} D_{jj}^{-\frac{1}{2}}\right).
\]
Recall that $W$ is symmetric, hence $\|W\|_{1}=\|W\|_{\infty}=d_{\max}$.
Thus, using Holder's inequality to bound $\|A\|_2 \le \sqrt{\|A\|_1 \cdot \|A\|_\infty}$, we have:
\begin{align*}
\|\widehat{P}-\widetilde{P}\|_{2} 
=\|cW-D^{-\frac{1}{2}}WD^{-\frac{1}{2}}\|_{2}
&\le \sqrt{\|cW-D^{-\frac{1}{2}}WD^{-\frac{1}{2}}\|_{1}\cdot\|cW-D^{-\frac{1}{2}}WD^{-\frac{1}{2}}\|_{\infty}}\\
&\leq\sqrt{\|W\|_{1}\cdot\|W\|_{\infty}} \cdot \max_{i,j=1,\ldots,n}\left\{ |D_{ii}^{-\frac{1}{2}}D_{jj}^{-\frac{1}{2}}-c|\right\}
\\
&= d_{\max}\cdot \max_{i,j=1,\ldots,n}\left\{ D_{ii}^{-\frac{1}{2}}D_{jj}^{-\frac{1}{2}}|1-cD_{ii}^{\frac{1}{2}}D_{jj}^{\frac{1}{2}}|\right\} 
\\
&\leq \frac{d_{\max}}{d_{\min}} \cdot \max\left\{|1-cd_{\max}|,\,|1-cd_{\min}|\right\},
\end{align*}
where we used that $d_{\min}\leq D_{ii}\leq d_{\max}$ for every $i=1,\ldots, n$.
After an optimization over $c$, which yields $c=2/(d_{\max}+d_{\min})$, we find 
\begin{equation}\label{eq: upperbound of Phat - P}
\|\widehat{P}-\widetilde{P}\|_{2} \le 
\frac{d_{\max}}{d_{\min}}\cdot\frac{d_{\max}-d_{\min}}{d_{\max}+d_{\min}}.
\end{equation}
Therefore, by combining \cref{eq: lowerbound of Phat - P,eq: upperbound of Phat - P} with the above choice of $c$, we have the desired bound
\[
    \left|\lambda_i(P)-\frac{2}{d_{\max}+d_{\min}}\cdot\lambda_i(W)\right| 
    \; \leq \; \frac{d_{\max}}{d_{\min}}\cdot \frac{d_{\max}-d_{\min}}{d_{\max}+d_{\min}}\;,
\]
which concludes the proof.
\end{proof}

We are now ready to prove \cref{prop: erh}.

\begin{proof}[Proof of \cref{prop: erh}]
Note that, for every fixed $i\in V$, we have that $|\nbr{i}|$ can be expressed as a binomial random variable, where each pair of \nodes is counted with probability $p$ and $\Ex[|\nbr{i}|] = (n-2)(n-1) p / 2$.
By applying \cref{thm: chernoff bound} and a union bound over all \nodes{}, it holds that, with high probability,
\begin{equation}\label{eq: erh degree}
    \left| \, |\nbr{i}| - (n-2)(n-1) p/2 \,\right| \leq \sqrt{n \log (n)/p},
    \quad \forall i \in V.
\end{equation}
Thus, by calling $d_{\min},d_{\max}$ the minimum and maximum degrees in the motif graph $G(\Gamma)$, and by using that $D_{ii} = 2|\nbr{i}|$ for every $i$, we have that, with high probability:
\begin{gather}\label{eq: dmin up lw bds}
    p(n-2)(n-1) - \sqrt{n \log (n)/p} \leq \; d_{\min} \leq p(n-2)(n-1),
\\
\label{eq: dmax up lw bds}
    p(n-2)(n-1) \leq  \; d_{\max} \leq p(n-2)(n-1) + \sqrt{n \log (n)/p}.
\end{gather}

Let $P$ be the transition matrix of a random walk on the motif graph of $\Gamma$.
Let $\lambda_1(P) \ge \lambda_2(P) \ge \ldots \ge \lambda_n(P)$ be the eigenvalues of $P$, counted with multiplicities, and let $\nu := \max\big(|\lambda_2(P)|,\, |\lambda_n(P)|\big)$.
By applying \cref{prop: erh nu,thm: adjacency vs transition matrix spectra} we get that, with high probability,
\begin{align*}
    \left| \nu-\frac{2p(n-2)}{d_{\max}+d_{\min}}\right|
    &\leq \frac{2}{d_{\max}+d_{\min}} \cdot \sqrt{\frac{n\log n}{p}} + \frac{d_{\max}}{d_{\min}} \cdot \frac{d_{\max}-d_{\min}}{d_{\max}+d_{\min}}
    \\
    &\leq \frac{1}{p(n-2)(n-1)-\sqrt{\frac{n \log n}{p}}}\sqrt{\frac{n\log n}{p}}\left(1+\frac{d_{\max}}{d_{\min}}\right)
    \le C \sqrt{\frac{\log n}{n^3}} p^{-3/2},
\end{align*}
where $C$ is a positive constant independent of $n$ and where we have used that, since $p$ satisfies \cref{eq: condition p}, then $p(n-2)(n-1) -\sqrt{n\log(n)/p} = \Omega(pn^2)$ and $d_{\max}/d_{\min}= O(1)$.

We notice that, by the bounds on $d_{\min}$ and $d_{\max}$ computed above in \cref{eq: dmax up lw bds,eq: dmin up lw bds},
we have that
\[
    2p(n-2) / (d_{\max}+d_{\min}) = O\left(n^{-1}\right).
\]
Moreover, by assumption $p\ge K\frac{\sqrt[3]{\log n}}{n^{1-(2\gamma/3)}}$, we also get that for a large enough positive constant $K$ and for $\gamma \in (0,1]$,
\[
    C\sqrt{\log (n) / n^3} p^{-3/2} = O\left(n^{-\gamma}\right).
\]
Thus, taking  a large enough positive constant $K$ and for $\gamma \in (0,1]$, we get \begin{equation}\label{eq: erh nu}
    \nu = O\left(n^{-1} + n^{-\gamma}\right)
    = O\left(n^{-\gamma}\right).
\end{equation}

Now we are ready to prove that $\Gamma$ satisfies \cref{assumptions on Gamma} and \cref{extra assumptions on Gamma}.
\begin{itemize}
\item[(\ref{assumptions on Gamma}.1):] 
Recall \cref{eq: erh degree}. Since by assumption $p$ satisfies the condition given in \cref{eq: condition p}, i.e, 
\[
    p\ge K\frac{\sqrt[3]{\log n}}{n^{1-(2\gamma/3)}}\qquad \text{for a large enough } K >0, \quad \gamma \in (0,1],
\]
it immediately follows that $|\nbr{i}| = \Omega(n^{1+(2\gamma/3)}\sqrt[3]{\log n}) = \omega(\log n)$, for every $i\in V$ and every $\gamma\in(0,1]$ with high probability.

\item[(\ref{assumptions on Gamma}.2):] 
From \cref{eq: erh nu}, we have that $\nu = O\left(n^{-\gamma}\right)$, with high probability.
The fact that $\nu<1$ with high probability is a consequence of $\gamma$ being strictly positive.

\item[(\ref{assumptions on Gamma}.3):] 
Recall the definition of $\varepsilon$ given in \cref{def: epsilon delta}.
Note that, by an application of \cref{thm: chernoff bound} on $W_{ij}$ and $D_{ii}$ (see \cref{eq: erh degree,eq: chernoff wij}), it follows that, with high probability, for every $i,j \in V$
\[
    \left| D_{ii} - p(n-2)(n-1) \right| \leq \sqrt{{n \log (n)/p}},
    \qquad
    \left| W_{ij} - p(n-2) \right| \leq \sqrt{{\log (n)/pn}}.
\]
Given the assumptions on $p$ (see \cref{eq: condition p}), we have
\begin{equation}\label{eq: varepsilon erh}
    \varepsilon = \Theta\left(\sqrt{\log (n)/n}\right).
\end{equation}

To validate \cref{assumptions on Gamma}.3, we need to prove that there exists $m\in\mathbb{N}$ such that $\nu^m(1-\nu)^{-1}\sqrt{\Delta n} \le m\varepsilon$ and also $m\varepsilon = o(1)$. From \cref{eq: varepsilon erh}, we see that is enough to take $m = o(\sqrt{n/\log n})$ to have $m\varepsilon = o(1)$.
As for the lower bound on $\varepsilon$, it is verified at least for every $m\in\mathbb{N}$ such that $m \ge \frac{1}{\gamma}$. Indeed, from \cref{eq: erh nu} we find that 
\[
    \nu^m (1-\nu)^{-1} \sqrt{\Delta n} = O\left(n^{\frac{1}{2} -\gamma m}\right),
\]
where we also used that, with high probability, $\Delta = d_{\max}/d_{\min} =  O(1)$ as a consequence of \cref{eq: dmin up lw bds,eq: dmax up lw bds} under the assumption on $p$ (see \cref{eq: condition p}).
Then, from \cref{eq: varepsilon erh}, it follows that every $m > \gamma^{-1}$ satisfies 
\[
    \nu^m (1-\nu)^{-1} \sqrt{\Delta n} \le m\varepsilon.
\]
We proved (\cref{assumptions on Gamma}.3) if $m = o(\sqrt{n/\log n})$ and $m > \gamma^{-1}$, which is true if $\gamma = \Omega(1)$ there exists at least one $m\in\mathbb{N}$ satisfying both the conditions.

\item[(\ref{extra assumptions on Gamma}.1):] 
Recall \cref{eq: varepsilon erh} and let 
\begin{equation}\label{eq: condition delta}
    \delta = o\left( \frac{\sqrt{\sum_{i\in V} D_{ii}^2}}{\sum_{i\in V} D_{ii}} \right).
\end{equation}
Using that with high probability $d_{\max}/d_{\min} = O(1)$, as a consequence of \cref{eq: dmin up lw bds,eq: dmax up lw bds} under the assumption on $p$ (see \cref{eq: condition p}), we have
\begin{gather}\label{eq: upp bd delta}
    \frac{\sqrt{\sum_{i\in V} D_{ii}^2}}{\sum_{i\in V} D_{ii}}
    \leq 
    \frac{\sqrt{\sum_{i\in V} d_{\max}^2}}{\sum_{i\in V} d_{\min}}
    =\frac{1}{\sqrt{n}}\cdot\frac{d_{\max}}{d_{\min}}
    = O\left(\frac{1}{\sqrt{n}}\right),
    \\\label{eq: lw bd delta}
    \frac{\sqrt{\sum_{i\in V} D_{ii}^2}}{\sum_{i\in V} D_{ii}}
    \geq
    \frac{\sqrt{\sum_{i\in V} d_{\min}^2}}{\sum_{i\in V} d_{\max}}
    =\frac{1}{\sqrt{n}}\cdot\frac{d_{\min}}{d_{\max}}
    = \Omega\left(\frac{1}{\sqrt{n}}\right).
\end{gather}
From \cref{eq: upp bd delta}, we get that $\delta = o(1/\sqrt{n})$. As a consequence, from  \cref{eq: varepsilon erh}, we find that $\delta = o(\varepsilon)$.

To conclude the proof of (\ref{extra assumptions on Gamma}.1), we have to prove the existence of $m,\delta$ such that $\delta \gg m\varepsilon^2$, $\delta$ is as in \cref{eq: condition delta}, and $m=o(\sqrt{n/\log n})$ (due to \cref{assumptions on Gamma}.3).
Let us choose, for example, $\delta = {n^{-3/4}}$, which satisfies \cref{eq: condition delta} due to \cref{eq: upp bd delta}, and $m=O(1)$.
Then
\[
m \varepsilon^2 
= O\left( \log (n)/n \right) 
\ll \delta = n^{-3/4}
\ll \varepsilon = O\left(\sqrt{\log (n) / n}\right),
\]
where we used \cref{eq: varepsilon erh} and \cref{eq: lw bd delta}.

\item[(\ref{extra assumptions on Gamma}.2):] 
Recall the bounds on $d_{\min}$ and $d_{\max}$ and that our assumptions on $p$ imply $d_{\max}/d_{\max} < 2$, with high probability. Thus,
\begin{align*}
    \left( \sum_{i\in V} D_{ii}^2 \right)^{-\frac{3}{2}} \!\!\!\sum_{i\in V} D_{ii}^3
    \leq \left( \sum_{i\in V} d_{\min}^2 \right)^{-\frac{3}{2}} \!\!\!\sum_{i\in V} d_{\max}^3
    = \frac{1}{\sqrt{n}}\cdot \frac{d_{\max}^3 }{d_{\min}^3}
    = O\left(\frac{1}{\sqrt{n}}\right) = o(1).
\end{align*}
\end{itemize}
\end{proof}
\section{Simulations}\label{sec: simulations}
We simulate the nonlinear dynamics studied in this paper for $\lambda \in \{-1/4,+1/4\}$, to show two different convergence behaviors of the dynamics. 
The state of each \node $\state{0}_i$ is initially set to $\pm 1$ with probability $1/2$. 
At each time $t>0$, \nodes update their states according to \cref{eq:dynamics clean}, with $s(x)=e^{x}$.

We consider two hypergraph topologies: a Erd\H{o}s-R\'{e}nyi hypergraph (where each triplet of \nodes is connected with some fixed probability) and a 1D torus (where each \node $i$ belongs to hyperedges
$\{i-2, i-1, i\}$, $\{i-1, i, i+1\}$, $\{i, i+1, i+2\}$, with neighbors modulo $n$).
The former satisfies the assumptions of \cref{theorem: main} (see \cref{prop: erh}).
The latter does not satisfy the assumptions of \cref{theorem: main}: \nodes have constant degree 3 irrespective of $n$ hence the hypergraph is not sufficiently dense. 
We set an odd number of \nodes in the 1D torus to prevent time-periodic configurations.

Let $m^{(t)} := \min_{i \in V} \state{t}_i$ and $M^{(t)} := \max_{i \in V} \state{t}_i$.
We stop the simulations at the first time $T$ such that $M^{(T)}-m^{(T)}< 1/n^2$. 
In \cref{fig:simulations}, we plot the evolution over time of $m^{(t)},M^{(t)},\bar{x}^{(t)} \!\!=\! \frac{\sum_{i \in V} D_{ii} \state{t}_i}{\sum_{i \in V} D_{ii}}$ (degree-weighted average of states at time $t$). 
We also plot $\bar{\mu}=\bar{x}^{(0)}$ as a reference line, namely the value the linear dynamics converges to, or equivalently the nonlinear dynamics when $\lambda=0$ (see \cref{prop: linear dynamics convergence,prop: linear interaction}). 

\begin{figure}[htbp]
    \centering
    \vspace{-1em}
    \includegraphics[width=0.95\linewidth]{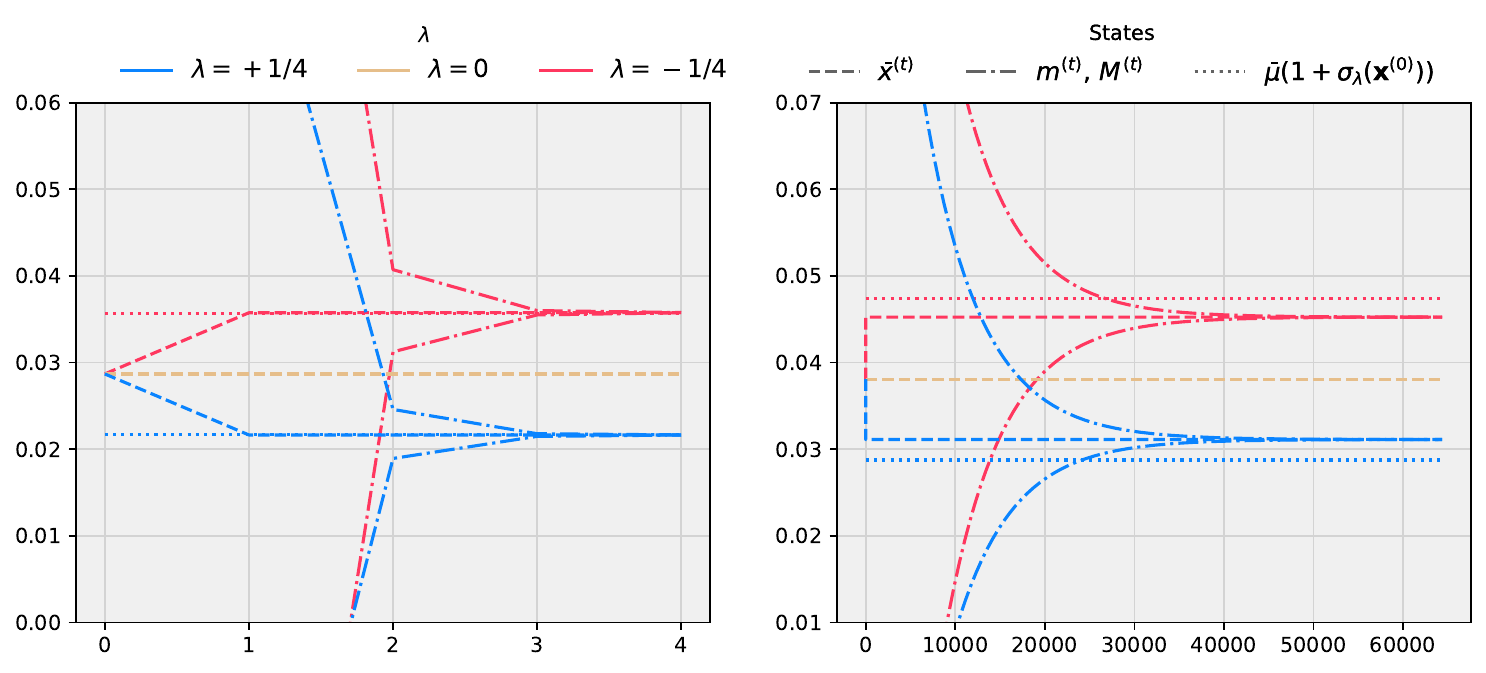}
    \vspace{-1em}
    \caption{Evolution of some significant states of the dynamics ($y$-axis, line styles) over time ($x$-axis), for different values of $\lambda$ (colors) and on different hypergraph topologies (left/right plots).
    Left: Erd\H{o}s-R\'{e}nyi hypergraph ($n=500, p=\log^{1/3}(n)/n$).
    Right: 1D torus hypergraph ($n=499$).}
    \vspace{-0.5em}
    \label{fig:simulations}
\end{figure}

From the simulations we can observe the same behavior described by \cref{theorem: main}, namely a multiplicative shift $(1+\sigma_{\lambda}(\mathbf{x}^{(0)}))$ from $\bar{\mu}$ that is either toward the initial majority when $\lambda<0$ or toward balance (i.e., 0) when $\lambda>0$. 
We measure the convergence error $\xi := |\bar{x}^{(T)}-\bar{\mu}(1+\sigma_{\lambda}(\mathbf{x}^{(0)}))|$ w.r.t.\ our result.
The simulation on the Erd\H{o}s-R\'{e}nyi hypergraph stops at $T=4$ and we have $\xi \approx 4 \cdot 10^{-5}$.
Instead, the simulation on the 1D torus hypergraph stops at $T\approx 65\,000$ and we have $\xi \approx 2 \cdot 10^{-3}$.
The slower convergence is explained by the coarser connectivity of the 1D torus hypergraph, which results in a larger value of $\nu$ (second largest eigenvalue of motif graph), a quantity that governs the convergence time of the dynamics in \cref{theorem: main}.
The larger convergence error $\xi$ instead, seems still within the error bounds given in our theorem since $n=499$. 
In fact, for this hypergraph we have that $\xi \approx 1/n$ and our result essentially claims the convergence error to be $o(|\bar{\mu}|)$ (with $\Ex[|\bar{\mu}|] \approx 1/\sqrt{n}$, given the randomness in the initial data).
Interestingly, the simulations agree with our result even on the 1D torus, which does not satisfy our assumptions. 
This suggests that the result in \cref{theorem: main} might hold for general hypergraph topologies and is worth of future investigation, possibly with higher order corrections beyond $\bar\mu(1+\sigma_\lambda(\mathbf{x}^{(0)}))$
to get a more accurate description of the dynamics.

Moreover, we study how the convergence error $\xi$ varies with respect to the number of nodes $n$.
We consider two hypergraph topologies: Erd\H{o}s-R\'{e}nyi hypergraphs and complete hypergraphs with an odd number of nodes $n\in\{3,\ldots,99\}$.
The convergence error is measured at time $T=4$ (this is large enough for convergence as observed in \cref{fig:simulations}) and averaged over $100$ simulations starting with independent initial states.

\begin{figure}[htbp]
    \centering
    \vspace{-1em}
    \includegraphics[width=0.95\linewidth]{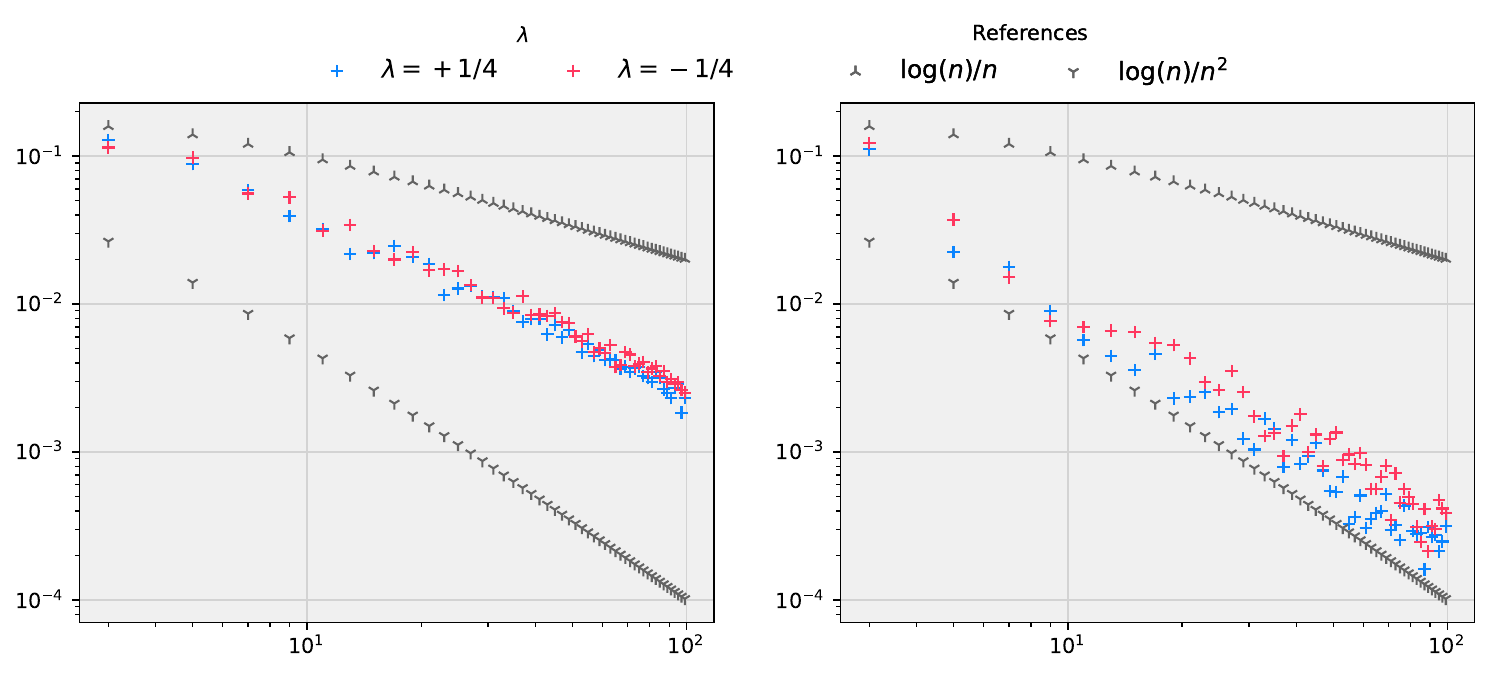}
    \vspace{-1em}
    \caption{Average convergence error $\xi$ of the dynamics ($y$-axis, log-scale) over the number of nodes $n$ ($x$-axis, log-scale), for different values of $\lambda$ (colors) and on different hypergraph topologies (left/right plots). 
    Left: Erd\H{o}s-R\'{e}nyi hypergraph ($p=1/(n\log n)$).
    Right: complete hypergraph.}
    \vspace{-0.5em}
    \label{fig:simulations-error}
\end{figure}

In \cref{fig:simulations-error} we plot the result of our simulations.
We can observe that the error converges to 0 roughly as a polynomial in $n$ (axes are log-scale).
We conjecture that our error estimate\footnote{Note that our error estimate is just an upper bound to the actual error.} is lower bounded by
$\Omega(\log(n) / n^2)$,
as it seems to be for the complete hypergraph. 
As expected, we note that the experimental error is below our estimate for the complete hypergraph of $\Theta(\log(n)/n)$, and that the estimate holds for all tested values of $n$. 
Comparing the error on different topologies, we observe that it is bigger for the sparser Erd\H{o}s-R\'{e}nyi hypergraphs (which are ``farther'' than the complete hypergraph from our mean field model), even if it stays below our estimate for the complete topology.

\bibliographystyle{alpha}
\bibliography{references}

\end{document}